\renewcommand{\P}{\mathbb{P}}
\newcommand{\E}{\mathbb E}
\newcommand{\N}{\mathbb N}
\newcommand{\R}{\mathbb R}
\newcommand{\Z}{\mathbb Z}
\DeclarePairedDelimiter\floor{\lfloor}{\rfloor}
\DeclarePairedDelimiter\ceil{\lceil}{\rceil}
\providecommand{\abs}[1]{\vert#1\vert}
\newcommand{\cA}{\mathcal{A}}
\newcommand{\cB}{\mathcal{B}}
\newcommand{\cC}{\mathcal{C}}
\newcommand{\cD}{\mathcal{D}}
\newcommand{\cE}{\mathcal{E}}
\newcommand{\cL}{\mathcal{L}}
\newcommand{\cM}{\mathcal{M}}
\newcommand{\cS}{\mathcal{S}}
\newcommand{\cW}{\mathcal{W}}
\newcommand{\cX}{\mathcal{X}}
\newcommand{\cY}{\mathcal{Y}}
\newcommand{\cZ}{\mathcal{Z}}
\newcommand{\h}[1]{\overline{#1}}
\newcommand{\qs}{\cX} 
\newtheorem{theorem}{Theorem}[section]
\newtheorem{lemma}[theorem]{Lemma}
\newtheorem{proposition}[theorem]{Proposition}
\newtheorem{corollary}[theorem]{Corollary}
\theoremstyle{remark}
\newtheorem{remark}[theorem]{Remark}
\def\xin{x_0} 
\def\nc{\mathfrak{D}} 
\def\en{M}
\def\arr{a}
\def\serv{s}
\def\depa{d}
\def\md{\cD}
\def\arrv{\mathbf\arr}
\def\servv{\mathbf\serv}
\def\depav{\mathbf\depa}
\def\rhov{\bm{\rho}}
\def\denp{\Gamma}
\def\Mx{\mathcal{M}}
\def\Ind{\mathcal{E}_{ind}}
\def\cq{\Phi}
\def\cm{\Psi}
\numberwithin{equation}{section}
\numberwithin{figure}{section}
\author{Ofer Busani\thanks{University of Bristol, School of Mathematics, Fry Building, Woodland Rd., Bristol BS8 1UG, UK. O. Busani was supported by EPSRC's EP/R021449/1 Standard Grant. E-mail: {\tt o.busani@bristol.ac.uk}} 
}
\title{Diffusive scaling limit of the Busemann process in Last Passage Percolation}
\date{\today}
\begin{document}

\sloppy

\maketitle
\begin{abstract}
	In exponential last passage percolation, we consider the rescaled Busemann process $x\mapsto N^{-1/3}B^\rho_{0,[xN^{2/3}]e_1} \,\, (x\in\R)$, as a process parametrized by the scaled density $\rho=1/2+\frac{\mu}{4} N^{-1/3}$, and taking values in $C(\R)$. We show that these processes, as $N\rightarrow \infty$, have a c\`adl\`ag scaling limit $G=(G_\mu)_{\mu\in \R}$, parametrized by $\mu$ and taking values in $C(\R)$. The limiting process $G$, which can be thought of as the Busemann process under the KPZ scaling, can be described as an ensemble of "sticky" lines of Brownian regularity. We believe $G$ is the universal scaling limit of Busemann processes in the KPZ universality class. Our proof provides  insight into this limiting behaviour by highlighting a connection between the joint distribution of Busemann functions obtained by Fan and Sepp\"al\"ainen in \cite{FS18}, and a sorting algorithm of random walks introduced by O'Connell and Yor in \cite{OcY02}.  
\end{abstract}
	\tableofcontents
\section{Introduction}
In \textit{last passage percolation} (LPP), i.i.d weights are assigned to each vertex of $\Z^2$. If $x,y\in\Z^2$ such that $y$ is above and to the right of $x$, then one defines the last passage time $L(x,y)$ as the maximum total weight collected by an up-right path on the lattice, starting at $x$ and terminating at $y$.  A path on which the total weight is attained is called a \textit{geodesic} from $x$ to $y$. Lattice LPP belongs to a  large family of LPP models, which can be viewed as random directed metric spaces \cite{DV21}, which seem to share the same limiting behaviour of the fluctuations of their observables. Since the seminal work of Rost \cite{R81} where the first order of $L(\bm{0},y)$ was established for $y$ large in exponential LPP, much progress has been made in the study of these models \cite{BDJ99,PS02,MQR17,DOV18,JRAS19}. 

One of the aspects of LPPs and random metric-like models that has been studied in the past three decades is that of infinite geodesics. An infinite geodesic is an up-right path on the random environment, such that its restriction to between any two points on it is a geodesic. Questions such as the existence and uniqueness of  infinite geodesics as well as other finer results have been studied in \textit{first passage percolation} (FPP) by C. Newman and co-authors \cite{HN97,HN01,N95,LN96}, while results of that flavour for various  LPP models can be found in \cite{GRAS17,P16,BHS18,BSS19,BBS20a,JRAS19,SX20,SS21a,SS21b,MV21}. One of the main tools of studying infinite geodesics in random (directed) metrics models is the Busemann function - a random stationary real valued function defined on $\Z^2\times\Z^2$  which satisfies the cocycle property and a so-called Burke's property.  Originating in hyperbolic geometry, the Busemann functions were introduced first to FPP by Newman \cite{N95} and Hoffman \cite{H08} and later to LPP in \cite{GRAS17b}. They underlie many of the techniques used to study infinite geodesics \cite{BBS20a,SX20} and have deep connections to the set of exceptional directions at which uniqueness of the geodesics fail \cite{JRAS19}. They  are also important in the study of  point-to-point geodesics \cite{BBS20}, their coalescence \cite{BF20} and the regularity of the passage time profile around a point \cite{P16,P18,BBS20,BF20,P21}. Busemann fucntions are usually associated with a direction on the first quadrant of the lattice -- for $\rho\in(0,1)$ one defines the Busemann function $B^\rho$ by essentially setting $B^\rho(x,y)$ to be the difference in passage time between two  geodesics starting from $x$ and $y$ and going in a direction that is parametrised by $\rho\in(0,1)$. $B^\rho$ turns out to hold much (and in some models all) the information about infinite geodesics emanating from every point of the lattice, in the direction associated with $\rho$. It is possible, to construct Busemann functions of different intensity $\rho$ on one probability space to obtain the \textit{Busemann process} $\bm{B}=\{B^\rho\}_{\rho\in(0,1)}$. The process $\bm{B}$ turns out to be very rich, and in the case of Exponential LPP, it was shown to hold all the information on the exceptional directions of non uniqueness of infinite geodesics \cite{JRAS19}. For more on the magic of Busemann functions in the context of LPP see \cite{S18,RA18}. 

The random picture of passage times and geodesics has an interesting scaling limit achieved through an incremental progress over the past three decades  \cite{BDJ99,PS02,MQR17} (to mention a few), culminating in showing that, under the (1:2:3)-KPZ scaling, the microscopic metric-like space of LPP converges to an object dubbed the Directed Landscape(DL) \cite{DOV18}. The DL is a continuous function $\mathcal{L}: \R^2\times \R^2 \rightarrow \R$ holding much of the information on the  limiting fluctuations of LPP models in the KPZ class. The prelimiting geodesics, in turn, scale to  continuous functions on the plane - the geodesics of the DL. While much of the work in recent years have been to understand the scaling limit of the LPP picture on a compact set in the DL, the question of the scaling behavior of infinite geodesics to that of those in the DL is interesting, and first steps were taken very recently in  \cite{MV21} where the existence of infinite geodesics and Busemann functions was proven in the DL.    

In that context, it would be interesting to see what is the Busemann process in the DL \cite[Remark 1.3]{D21} i.e.\ does the  limit 
\begin{align}\label{Bpdl}
	B_{DL}^z(x,0;y,t)=\lim_{r\rightarrow\infty}\mathcal{L}(rz,-r;x,0)-\mathcal{L}(rz,-r;y,t) \qquad \forall z,x,y,t\in \R,
\end{align} 
exist and what are its properties. Note that the index $z$ in \eqref{Bpdl} plays the role of the direction while $(x,s)$ and $(y,t)$ are points on the plane. The existence of $B_{DL}^z$ for every $z\in\R$ was proven very recently in \cite[Section 3.8]{MV21}. Following the same progress as in that of the lattice LPP, it would be interesting to see what is the  distribution of the Busemann process $z\mapsto B_{DL}^z$, its properties, and its connection to infinite geodesics in the DL. From the co-cycle property of the Busemann functions, the study of $B_{DL}$ can be broken down into two steps: a) study the Busemann process on one time horizon, i.e. study the process $H_{DL}^z$ defined by $z\mapsto B_{DL}^z(0,0;x,0)$, b) push the process $H_{DL}^z$ through $\mathcal{L}$ to obtain $B_{DL}$ on different time horizons i.e.\
\begin{align}
	B_{DL}^z(0,0;x,t)=\sup_{y}H^z_{DL}(y)\mathcal{L}(y,0;x,t) \qquad z\in\R, t>0.
\end{align}

From the stationarity of the Busemann process $\bm{B}$ on the lattice and the (1:2:3)-KPZ limit, it is expected that $H_{DL}$ should emerge as the diffusive scaling limit of 
\begin{align*}
	H^\rho_x= B^{1/2+\rho}_{(0,0),xe_1} \qquad x\in\R,\rho\in(-1/2,1/2),
\end{align*}
that is,  we expect the following sequence
\begin{align}\label{Gin}
	G^N_\rho(x):= N^{-1/3}\big[H^{\frac{\rho}{4}N^{-1/3}\rho(z)}_{xN^{2/3}}-2xN^{2/3}\big],
\end{align}
to converge in some sense to an object parameterized by the intensity $\rho$ and taking values in $C(\R)$. This limit should, perhaps under some reparametrization, be equal in distribution to $H_{DL}$, and so, should be universal in the KPZ class.

\textit{The main motivation behind this work is the study of $B_{DL}$ and the geodesics associated to it in the DL. This work  establishes  the limit $G$ of the sequence in \eqref{Gin}, which we call the Stationary Horizon (SH), and studies its properties.} 

Only a few months after the first draft of this paper, Sepp\"al\"ainen and Sorensen obtained, among other results, the joint distribution of the unscaled Busemann process in Brownian LPP (BLPP), i.e.\ when the set of directions by which the process is parameterized is macroscopic, as opposed to our work where we scale the directions around a specific macroscopic direction. Interestingly, although not a proven result yet, it seems that the (unscaled) Busemann process in BLPP is essentially the SH. From that perspective, the results in \cite{SS21a,SS21b} would complement the results in this work. One possible explanation for the SH appearance as Busemann process of the BLPP without scaling the direction, is that the BLPP can be thought of as a lattice LPP after a diffusive scaling. It is then not surprising that the Busemann process in BLPP has a scaling invariance property in that direction.  

Regarding our proof and techniques, our starting point will be the results of Fan and Sepp\"al\"ainen in \cite{FS18} where the finite dimensional distribution of the Busemann process was obtained. Our main technique is a   "melonization" representation of the map presented in \cite{FS18},  which, as it turns out, is closely related  to the RSK picture as non stationary queues developed in \cite{Occ03,BBO05} by O'Connell and co-authors. Loosely speaking, melonization of a finite sequence of random walks (RWs) is a sorting process that results in a sequence of ordered RWs and is closely related to the dynamics of queues in tandem. The reason for the term melonization is that by Greene's Theorem the sum of the height of the $k$ top sorted RWs has the same distribution as that of the maximal passage weight collected by  $k$ non-intersecting paths, which arguably looks like a melon. Even though we do not study a possible connection between our "stationary RSK" and  $k$-paths maximal weight in this paper, we shall still refer to  our sorting process as melonization, to point out the relation to previous work in the standard RSK model.  Our representation does not seem to be completely new, however, our derivation of it is new, and its interpretation as melonization of  random walks is proved important in this work, and appears to be novel in the context of two sided RWs. We discuss this at length in Subsection \ref{subsec:mel}. 

This melonization dynamics is behind  one of the most salient features of the SH -  although $G$ is two dimensional as function of space $x$ and its drift $\mu$, it can be represented as an ensemble of "sticky" lines (Figure \ref{sfig:SH2}). Alternatively, we can (and will) think of $G$ as a right continuous with left limits (rcll) monotone process which is a jump process when restricted to any compact interval (Figure \ref{sfig:SH}). This stickiness phenomenon is in line with similar prelimiting and limiting behavior of LPP models studied in the literature, for the Difference Profile \cite{BGH19,BGH21,GH21}, the Busemann process in lattice LPP \cite{JRAS19,FS18} and the Shock Measures \cite{D21,MV21,DV21}. This in turn shows, that the rcll microscopic behaviour of the Busemann function carries over to its diffusive limit. It also implies that infinite geodesics associated with a continuum of directions, emanating from a single point in the DL form a tree (see \ref{sfig:dgeo2}). We defer this, and the connection between the SH and the infinite geodesics in the DL to a future work. 
\subsection{Notation}
 We denote by $\N=\{1,2,...\}$ the set of natural numbers. For $\lambda>0$, we write $X\sim \rm{Exp}(\lambda)$ if $X$ is a random variable s.t. $\P(X>t)=e^{-\lambda t}$ for $t>0$. For $\rho_1,\rho_2>0$ and  two independent random variables $X^1\sim \mathrm{Exp}(\rho_1)$, $X^2\sim \mathrm{Exp}(\rho_2)$ we write $X\sim \mathrm{Exp}(\rho^2,\rho^1)$, if $X\sim X^2-X^1$. For $x,y\in \R \cup \{\infty,-\infty\}$, we denote by $\llbracket x,y \rrbracket$ the set $\Z \cap [x,y]$. $e_1$ and $e_2$ stand for $(1,0)$ and $(0,1)$ respectively.
\begin{figure}[t]	
	\centering
	\begin{subfigure}{0.55\textwidth}
		\includegraphics[width=0.9 \textwidth]{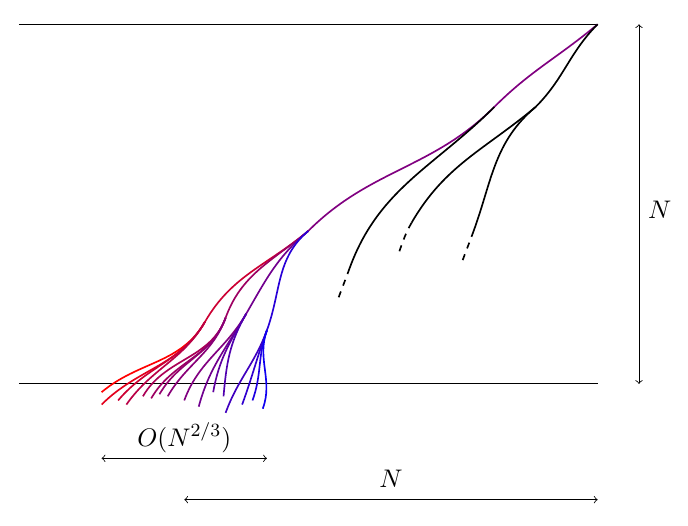} 
		\caption{Dense crossings of geodesics}
		\label{sfig:dgeo}
	\end{subfigure}%
	\begin{subfigure}{0.55\textwidth}
		\includegraphics[width=0.9 \textwidth]{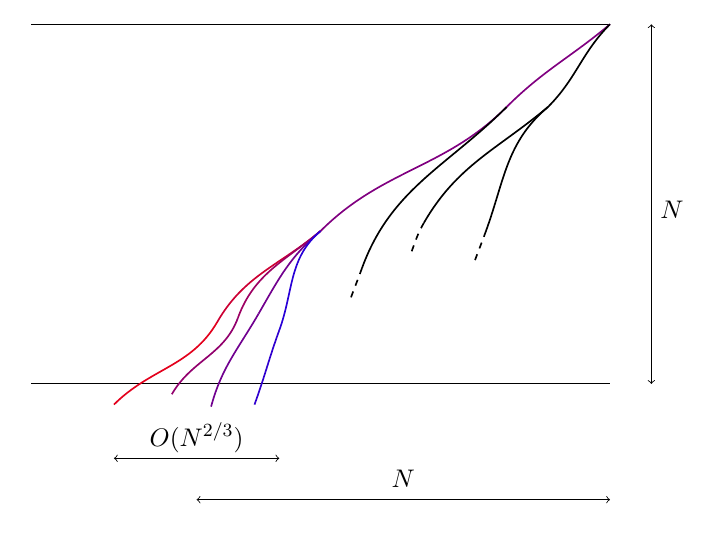}
		\caption{Sparse crossings of geodesics} 
		\label{sfig:dgeo2}
	\end{subfigure}
	
	\caption{Two possible behaviours of infinite geodesics starting from $(N,N)$. From the point $(N,N)$ we consider infinite geodesics  going in the direction $(1-\alpha,1+\alpha)$ where $\alpha=O(N^{-1/3})$. These geodesics will cross the $x$-axis within $O(N^{2/3})$ distance of the origin. Different colours represent different intensities. Figure \ref{sfig:dgeo}) The set of crossing points of the geodesics with the $x$-axis is "dense" in $\R$ i.e.\ within any interval of size $\Theta(N^{2/3})$ one can find a crossing with high probability. Our work seems show that this picture is false. Figure \ref{sfig:dgeo2})  The set of crossing points of the geodesics with the $x$-axis on an interval of size $O(N^{2/3})$ is finite. Our work implies that this picture is the correct one. }
	\label{fig:dgeo}
\end{figure}
\subsection{Main results}
 Let $\cS=C(\R)$ be the set of continuous functions on the real line equipped with the topology of uniform convergence on compact intervals, and let $D(\R,\cS)$ be the space of $\cS$-valued rcll functions  on the real line, equipped with the Skorohord topology (see Subsection \ref{subsec:sh2} for the exact definition). Let $\bm{B}=\{B^\rho\}_{\rho\in(0,1)}$ be the Busemann process as defined in Section \ref{sec:bus}. From  $\bm{B}$ we construct a sequence of processes $G^N=\{G^N_{\mu}\}_{\mu\in\R}$ in $D(\R,\cS)$  as follows. First define $F^N_{\cdot}\in D(\R,\cS)$ through
\begin{align*}
	\text{$F^N_\mu$ is the linear interpolation of the discrete function $\{B^{1/2-\frac{\mu}{4}N^{-1/3}}_{0,ie_1}\}_{i\in \Z}$}, \quad \forall \mu\in \R.
\end{align*}
Then 
\begin{align*}
	G^N_\mu(x)=N^{-1/3}[F^N_\mu(N^{2/3}x)-2N^{2/3}x], \quad \forall \mu\in \R.
\end{align*}
A rigorous definition of $G^N$ is given in \eqref{G}.  One should think of $G^N$ as a process, indexed by $\R$, and whose values are two-sided random walks.  To see where the parametrization $\mu \mapsto 1/2-\frac{\mu}{4}N^{-1/3}$ for $G^N$ comes from, note first that from the properties of the Busemann process (Section \ref{sec:bus}), $B_{\cdot}^{1/2-\frac{\mu}{4}N^{-1/3}}-2\cdot
\,$  is a discrete RW with mean $\mu N^{-1/3}+O(N^{-2/3})$. This in turn means that $G^N_{\mu}$ is a RW with drift $\mu+o(1)$. In other words, the process $G^N$ is parametrized by the drift of the random walk $G^N_{\cdot}$. We are now ready to state our results.

\begin{theorem}\label{thm:sh}
	There exists a  process $G\in D(\R,\cS)$ such that the following limit holds in distribution in $D(\R,\cS)$
	\begin{align*}
		G^N \Rightarrow G  \qquad \text{as $N\rightarrow \infty$}.
	\end{align*} 
\end{theorem}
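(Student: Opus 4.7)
My plan is to follow the standard route for weak convergence in the Skorokhod space $D(\R,\cS)$: first establish convergence of finite-dimensional distributions, i.e.\ of $(G^N_{\mu_1},\dots,G^N_{\mu_k})$ for any fixed $\mu_1<\cdots<\mu_k$, as random elements of $\cS^k$, and then prove tightness of the laws of $\{G^N\}$ in $D(\R,\cS)$. The candidate limit $G$ is then identified through its finite-dimensional marginals.

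\textbf{Finite-dimensional convergence via melonization.} The key input is the Fan--Sepp\"al\"ainen joint description of the Busemann process from \cite{FS18}. For fixed densities $\rho_1<\cdots<\rho_k\in(0,1)$, the joint law of $(B^{\rho_1}_{0,\cdot e_1},\dots,B^{\rho_k}_{0,\cdot e_1})$ can be written as the image of $k$ independent two-sided random walks with $\mathrm{Exp}(\rho_j,1-\rho_j)$-type increments under the deterministic sorting/queueing map discussed in Subsection~\ref{subsec:mel}; this is precisely the melonization representation. Plugging in $\rho_j=1/2-\tfrac{\mu_j}{4}N^{-1/3}$, the input walks, after centering by $2\cdot$ and rescaling by $N^{-1/3}$ in space and $N^{2/3}$ in time, converge by Donsker's invariance principle to independent two-sided Brownian motions with drifts $\mu_1,\dots,\mu_k$ and diffusion coefficient $\sqrt{2}$. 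The bulk of the finite-dimensional step is then to show that the melonization map is continuous on $C(\R)^k$ in the topology of uniform convergence on compacts, so that one can pass to the limit via the continuous mapping theorem. I would verify continuity by expressing melonization as an iteration of tandem-queue operators of the form $(f,g)\mapsto (f\wedge D(f,g),\dots)$, where $D(f,g)(x)=\sup_{y\le x}\{g(x)-g(y)+f(y)\}$ is the O'Connell--Yor departure operator; these operators are continuous on pairs whose differences grow linearly to $+\infty$ at $\pm\infty$, a property inherited from the drifts $\mu_i\ne \mu_j$. This defines the limiting process $G=(G_\mu)_{\mu\in\R}$ as the melonization of an ensemble of Brownian motions with drifts indexed by $\mu$.

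\textbf{Tightness.} The target space $D(\R,\cS)$ is delicate because $G^N$ is a process in $\mu$ taking values in $\cS=C(\R)$; I would reduce tightness to two ingredients. First, for each fixed $\mu$, tightness of $G^N_\mu$ in $\cS$: since $G^N_\mu$ is (essentially) a recentered random walk with drift $\mu+o(1)$ and exponential-difference increments, standard Donsker-type estimates yield tightness of these marginals (with Brownian modulus of continuity on compacts). Second, a modulus-of-continuity-type control in $\mu$, adapted to the Skorokhod topology: since the melonization representation is monotone in $\mu$, one may use stationarity and monotonicity to bound the number of ``jumps'' of $\mu\mapsto G^N_\mu$ in any compact interval $[a,b]$ of drifts (viewed as jumps in the $\cS$-valued cadlag process), reducing the problem to estimates on Busemann increments $B^{\rho_1}_{0,ie_1}-B^{\rho_2}_{0,ie_1}$ that are already computable from the queueing/melonization representation. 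Combined with tightness of the one-parameter marginals and a standard Aldous-type criterion for cadlag processes valued in a Polish space, this will yield tightness in $D(\R,\cS)$.

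\textbf{Main obstacle.} The finite-dimensional step is essentially the continuous mapping theorem once continuity of melonization on $C(\R)^k$ is proven, and I expect this to be largely routine modulo care at the boundary at $\pm\infty$. The genuinely hard part will be the tightness in $D(\R,\cS)$: controlling the $\mu$-fluctuations of an $\cS$-valued cadlag process uniformly in $N$ requires quantitative estimates on how fast Busemann functions at nearby densities can decouple, and translating these estimates into an Aldous- or jump-counting-style criterion in the Skorokhod topology. This is where the melonization picture, and the fact that the increments of $G^N_{\mu_1}-G^N_{\mu_2}$ are explicitly representable through queueing operators applied to independent walks, should do the heavy lifting.
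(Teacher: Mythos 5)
Your overall architecture --- finite-dimensional convergence plus tightness in $D(\R,\cS)$ --- is the same as the paper's (Lemma \ref{lem:conv} packages exactly these two ingredients together with stochastic continuity), and your tightness sketch correctly identifies the right objects: counting epochs/jumps in $\mu$ on compact $\mu$-$x$ rectangles using the queueing representation, which is what Proposition \ref{prop:ub2} and Proposition \ref{prop:rc} do. But two points deserve closer scrutiny.

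First, the finite-dimensional step is not ``largely routine.'' You propose to invoke the continuous mapping theorem by showing that the melonization map $\cq^k$ is continuous on $C(\R)^k$. It is not: as the paper notes, $\cq$ is already discontinuous on $\cS\times\cS$ (a sequence $(f^n,g^n)\to(0,\mathrm{Id})$ with $W_{-m}(f^n-g^n)=n$ pushes the argmax $P_t$ to $-\infty$ and breaks continuity; see the remark following \eqref{An}). Restricting to pairs whose difference has linear drift does not by itself rescue a clean CMT argument, because you must control the continuity of $\cq^k$ \emph{along the prelimiting sequence} of interpolated random walks, not just at the Brownian limit, and you must also bound the mismatch between the discrete operator $\md^{(k)}$ and its continuous analogue $\cq^k$. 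The paper handles both of these by introducing the ``locality'' events $A_R^{\h{f},\h{g}}$ and $\cA^{\h{I}}_N$, on which $\cq^k$ is almost Lipschitz (Lemmas \ref{lem:con}, \ref{lem:con2}) and on which $\cm\circ\md^{(k)}$ is close to $\cq^k\circ\cm$ (Lemma \ref{lem:aprox}); it then shows these events have high probability and extracts a Cauchy sequence of laws (Proposition \ref{prop:conv}). This conditioning machinery is not a refinement of your sketch but a necessary replacement for a CMT that does not directly apply.

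Second, the tightness sketch identifies the right qualitative ingredient (monotonicity and jump counting) but omits the one structural fact that actually makes the bound quantitative: in the sequential melonization construction $\nc^{(n)}$, the sojourn times $J^i_x=S_{oj}(f^{i-1},u^i)_x$ across levels are \emph{mutually independent}, each $\mathrm{Exp}(\rho_{i-1}-\rho_i)$ (Proposition \ref{prop:J}). This is what lets the paper show that two consecutive ``changes'' in the line ensemble require two independent, individually-unlikely threshold crossings (Lemma \ref{lem:Fb}), which is the engine behind the jump-count bound and hence behind the modulus-of-continuity estimate. Without singling out this independence, ``estimates on Busemann increments already computable from the queueing representation'' does not explain why consecutive jumps decouple; indeed, the random walks involved are not independent of the barriers $J^l$, and the paper needs an additional comparison with the packed-melon (empty-queue) dynamics to circumvent this (see the discussion following \eqref{eq1} and Subsection \ref{subsec:CSE}). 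So the heavy lifting you defer to the melonization picture is real, but the specific lever is the independence of the sojourn barriers across levels plus the packed-melon comparison, neither of which appears in your plan.

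A minor point: the limiting driving ensemble is Brownian with diffusivity $2$ (variance $4$ per unit time, from $\Var(\mathrm{Exp}(1/2))=4$), not ``diffusion coefficient $\sqrt 2$''; this looks like a normalization slip rather than a substantive error.
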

We call the limiting process $G$ the \textit{Stationary Horizon}. 
Here are some of its properties.
\begin{theorem}\label{thm:prop}
	The Stationary Horizon $G$ satisfies the following:
	\begin{enumerate}
		\item \label{odd} The one-dimensional distribution of $G$ is that of a  Brownian motion with a drift, i.e. 
		\begin{align}
			G_\mu \sim W^\mu \quad \mu\in \R,
		\end{align}
		where $W^\mu$ is a doubly-infinite Brownian motion with drift $\mu$ and diffusivity $2$, conditioned to vanish at the origin.
		\item \label{QM} For every $k\in\N$, there exists a function $\Phi^k:C(\R)^k\rightarrow C(\R)^k$ such that for every $\mu_1<...<\mu_k$ in $\R$
		\begin{align*}
			(G_{\mu_1},...,G_{\mu_k})\sim \Phi^k(W^{\mu_1},...,W^{\mu_k})
		\end{align*}
		where $W^{\mu_1},...,W^{\mu_k}$ are independent two-sided Brownian motions with diffusivity $2$ and drift $\mu_1,...,\mu_k$ respectively,  conditioned to vanish at the origin.
		\item \label{sci}$G$ is scale-invariant with exponents $(1,-2,1)$, i.e.\ for $c>0$
		\begin{align}
			cG_{c\cdot}(c^{-2}\cdot)\sim G.
		\end{align}
		
		\item \label{pc}$G$ is stochastically continuous, i.e. 
		\begin{align*}
			\P( G_\mu-G_{\mu-}=0)=1 \quad \forall \mu\in \R.
		\end{align*}
		\item \label{jp}Let $x_0>0$ and consider the process $G^{x_0}\in D\big(\R,C([-x_0,x_0])\big)$ defined through
		\begin{align}\label{Gx02}
		G^{x_0}_\mu(x)=G_\mu(x)\quad \forall \mu\in\R, x\in[-x_0,x_0].
		\end{align}
		In words,  $G^{x_0}$ retains only the information of $G$ restricted to $[-x_0,x_0]$ (see Figure  \ref{fig:SH}). Then $G^{x_0}$ is a pure jump process in $D\big(\R,C([-x_0,x_0])\big)$, i.e.\ its increments are the sum of its jumps in  $D\big(\R,C([-x_0,x_0])\big)$. Moreover, $G^{x_0}$ has finitely many jumps on any compact interval $[-\mu_0,\mu_0]$, and we have the bound
		\begin{align}\label{ub82}
		\P\big(\#\{\text{$\mu\in[-\mu_0,\mu_0]$: $G^{x_0}$ has a jump at $\mu$ }\}\geq M\big)\leq CM^{-1} \qquad M>0.
		\end{align}
		for some constant $C>0$ depending on $x_0$ and $\mu_0$.
		\item \label{mono}For any real numbers $x_1\leq x_2$, the process 
		\begin{align*}
		\mu \mapsto G_\mu(x_2)-G_\mu(x_1)
		\end{align*}
		is a non-decreasing jump process in $D(\R,\R)$, i.e. a c\`adl\`ag process such that   for any $\mu_1 \leq \mu_2$
		\begin{align}\label{mon2}
		G_{\mu_2}(x_2)-G_{\mu_2}(x_1)\geq G_{\mu_1}(x_2)-G_{\mu_1}(x_1).
		\end{align}
	\end{enumerate}
\end{theorem}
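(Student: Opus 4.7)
My strategy is to pass each property through the weak convergence in Theorem \ref{thm:sh}, exploiting three structural features of the prelimit: Burke's stationarity, the monotonicity of the Busemann process in $\rho$, and the Fan--Sepp\"al\"ainen joint law \cite{FS18} together with the melonization representation discussed in the introduction.

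For part \ref{odd}, Burke's property shows that $i\mapsto B^{1/2-\frac{\mu}{4}N^{-1/3}}_{0,ie_1}-2i$ is a centered two-sided random walk with per-step variance tending to $2$ and per-step drift of order $N^{-1/3}$, so Donsker's theorem applied to the definition of $G^N_\mu$ yields $G^N_\mu\Rightarrow W^\mu$ in $\cS$; combined with Theorem \ref{thm:sh} this identifies $G_\mu\sim W^\mu$. Part \ref{QM} is the joint analogue: \cite{FS18} represents $(B^{\rho_1},\dots,B^{\rho_k})$ for $\rho_1<\cdots<\rho_k$ as a deterministic (melonization) map applied to $k$ independent stationary Busemann random walks of densities $\rho_i$. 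Along the scaling $\rho_i=1/2-\mu_i N^{-1/3}/4$ this prelimiting map converges to a continuous map $\Phi^k:C(\R)^k\to C(\R)^k$, whose explicit construction is bundled into the proof of Theorem \ref{thm:sh}, and \ref{QM} then follows from joint Donsker convergence of the inputs and continuity of $\Phi^k$.

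Parts \ref{sci} and \ref{mono} are inherited from the prelimit. Monotonicity of the exponential Busemann process in $\rho$ gives $G^N_{\mu_2}(x_2)-G^N_{\mu_2}(x_1)\geq G^N_{\mu_1}(x_2)-G^N_{\mu_1}(x_1)$ for $\mu_1\leq\mu_2$ and $x_1\leq x_2$, and this inequality is preserved under the distributional limit, yielding \ref{mono}. For \ref{sci}, the cleanest route uses \ref{QM}: since $(cW^{c\mu_i}(c^{-2}\cdot))_i\stackrel{d}{=}(W^{\mu_i})_i$ as independent drift-$\mu_i$ diffusivity-$2$ Brownian motions, it suffices to check that $\Phi^k$ commutes with the rescaling $f(\cdot)\mapsto c\,f(c^{-2}\cdot)$, which is inherited from the scale covariance of the melonization map (built from running maxima/suprema).

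Part \ref{jp} is the main obstacle. For fixed $N$, the restriction of $G^N$ to the spatial window $[-x_0,x_0]$ is automatically piecewise constant in $\mu$, since each $\rho\mapsto B^\rho_{0,ie_1}$ with $i\in\llbracket -x_0 N^{2/3}, x_0 N^{2/3}\rrbracket$ is a step function whose jumps occur at the exceptional directions of $ie_1$. The quantitative ingredient is a uniform-in-$N$ bound $\E\bigl[\#\{\text{jumps of $G^{N,x_0}$ on $[-\mu_0,\mu_0]$}\}\bigr]\leq C(\mu_0,x_0)$, obtained either by a direct competition-interface count in the KPZ scaling box or, more naturally, by counting the melonization swaps that fall inside the spatial window. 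Markov's inequality together with weak convergence in $D(\R,C([-x_0,x_0]))$ then yields \eqref{ub82}, while the pure-jump structure passes from prelimit to limit. Finally, part \ref{pc} follows from \ref{odd} and \ref{mono}: for fixed $x_1<x_2$, the process $Y_\mu := G_\mu(x_2)-G_\mu(x_1)$ is non-decreasing c\`adl\`ag by \ref{mono} and has continuous mean $\E[Y_\mu]=\mu(x_2-x_1)$ by \ref{odd}, so the nonnegative jump $Y_\mu-Y_{\mu-}$ has zero expectation and vanishes a.s.\ at each fixed $\mu$. Combined with $G_\mu(0)=G_{\mu-}(0)=0$, continuity of $G_\mu,G_{\mu-}\in\cS$ in $x$, and a countable dense set of pairs $(x_1,x_2)$, this forces $G_\mu=G_{\mu-}$ a.s.
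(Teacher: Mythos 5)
Your proposal is broadly aligned with the paper for parts~\ref{odd}, \ref{QM}, and \ref{mono}: the paper likewise derives \ref{odd} from Donsker applied to the interpolated Busemann walk, \ref{QM} from the melonization/queueing representation $\cq^k$ (Proposition~\ref{prop:conv} and Proposition~\ref{prop:eqD}), and \ref{mono} from the monotonicity \eqref{mon} together with \eqref{cocyc} passed through the limit. One caution on \ref{QM}: the map $\Phi^k=\cq^k$ is \emph{not} continuous on $\cS^k$ (the paper's Remark after \eqref{ce} exhibits the failure), so your phrase ``continuity of $\Phi^k$'' is an oversimplification; the paper works around it via the locality events $A_R^{\h f,\h g}$ from~\eqref{Af} on which $\cq^k$ is ``almost Lipschitz'' (Lemma~\ref{lem:con2}), and this is indeed where the real content lives.

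For \ref{sci} and \ref{pc} you take genuinely different and cleaner routes than the paper. The paper proves \ref{sci} by observing the exact prelimiting identity $G^N_\mu(x)=cG^{c^3N}_{c\mu}(c^{-2}x)$ (taking $N$ real-valued) and passing to the limit; your route instead uses part~\ref{QM}, the Brownian scaling invariance of the input vector, and the fact that $\cq^k$ commutes with $f\mapsto cf(c^{-2}\cdot)$ — which follows from \eqref{si} and Lemma~\ref{lem:const2}. Both work; yours has the virtue of not needing to extend the definition of $G^N$ to non-integer $N$. For \ref{pc}, the paper uses a quantitative estimate on the probability of an epoch near a fixed $\mu$ (Lemma~\ref{lem:cond1}) together with a general transfer lemma (Lemma~\ref{lem:cl}); your argument via $\E[Y_\mu]=\mu(x_2-x_1)$ and monotonicity (dominated convergence to get $\E[Y_{\mu-}]=\mu(x_2-x_1)$, then a nonnegative jump with zero mean vanishes) is shorter, elegant, and self-contained given parts~\ref{odd} and~\ref{mono}, and the reduction $G_\mu=G_{\mu-}$ via a countable dense set of $(x_1,x_2)$ plus $G_\mu(0)=G_{\mu-}(0)=0$ is correct.

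The gap is in \ref{jp}. You propose to establish a uniform-in-$N$ moment bound $\E[\#\{\text{jumps of }G^{x_0,N}\text{ on }[-\mu_0,\mu_0]\}]\leq C(\mu_0,x_0)$ and then apply Markov. But the paper does not prove such a bound, and its actual tail estimate $\P(\#\geq M)\leq CM^{-1}$ (Proposition~\ref{prop:fe}, built on the pair-distance estimate Proposition~\ref{prop:ub2}) is at the borderline of integrability: summing $C/M$ over $M$ diverges, so the stated bound \emph{cannot} be deduced from the first moment unless one first improves the tail. Your two suggested routes to the moment bound — a ``direct competition-interface count'' and ``counting the melonization swaps'' — are left entirely unsubstantiated, and the second is essentially the hard content of Section~\ref{subsec:nc} and Lemma~\ref{lem:Fb}, where the obstacle is precisely the lack of independence between the sojourn-time barriers and the random walks that must cross them. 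The actual mechanism in the paper sidesteps moments: it shows that the probability of \emph{two} epochs within distance $\delta$ is $O(\delta)$ (Proposition~\ref{prop:ub2}), whence $\geq M$ epochs in $[-\mu_0,\mu_0]$ forces two within $2\mu_0/(M-1)$ and the $O(1/M)$ bound follows; this bound then transfers to the limit via a.s.\ Skorohod convergence of $G^{x_0,N}$, as in the argument around \eqref{ub81}. You would need either to supply this pair-counting estimate, or to actually prove the expectation bound, which appears to require a strictly stronger tail than the paper currently has.
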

Figure \ref{fig:SH} is an illustration of a typical realization of $G$ on a compact rectangle on the $\mu-x$ plane. Theorem \ref{thm:prop} shows that some of the properties of the microscopic Busemann process carry over in the limit as can be seen by the  similarity between properties \ref{pc}--\ref{mono} of the SH and the fact that $\rho\mapsto B^\rho_{\bm{0},e_1}$ is a Poisson process \cite{FS18}. 
\begin{remark}
	We believe the upper bound \eqref{ub82}, on the number of jumps of the SH on a compact set in $x-\mu$ is far from optimal, and we expect it to be exponential.  
\end{remark}
\begin{remark}
	While we do not provide a proof, we do not expect $G$ to be Markovian.
\end{remark}

Let us now try to give a sketch of the proof of the above result while pointing out the main challenges in it.  In general, in the Skorohord topology on the real line, a sequence of random variables $X^N$ converges to some $X$ if (see \ref{sub:TR})
\begin{enumerate}
	\item  The sequence $X^N$ is tight,\label{i1}
	\item $X^N$ converges in the sense of finite dimensional distributions.\label{i2}
\end{enumerate}
  Proving Item \ref{i1} consists mostly of showing that the  processes in the sequence $\{G^N\}_{N\in\N}$ do not have too many jumps on a compact rectangle on the $\mu-x$ plane, uniformly in $N$. In Section \ref{sec:SoQ}, we develop the technical tools to tackle this problem. We show that the map $\md$ developed in \cite{FS18} can be obtained by the top lines of sorted random walks. These tools are then used in Section \ref{subsec:sh1} to obtain the necessary intermediate result. For Item \ref{i2}, we essentially show  that under diffusive scaling, the discrete queuing map \eqref{q} converges to the so called Brownian queue \eqref{cq}\cite{o2001brownian}. 
  \subsection{Acknowledgements}
  The author would like to thank M\'arton Bal\'azs for reading early parts of the manuscripts and helpful comments, Patrik Ferrari and Offer Kella for helpful discussions. The author also thanks Timo Sepp\"al\"ainen and Evan Sorensen for helpul comments.  
\section{Some results on queues in stationarity}\label{sec:SoQ}
In this section we develop the tools that will be used later to show that the prelimiting sequence $\{G^N\}$ is tight. The main technical result of this section is Proposition \ref{prop:J}. We conclude  this section with discussing our results in the context of melonizing RWs, which although are not stated as a theorem, underlie the  main ideas in the proofs of Section \ref{sec:bus}.
\subsection{Preliminaries on Queues}\label{subsec:prelim}
Let us recall the basic setup of queuing theory. Let $\qs_+=(\R_+)^{\Z}$. Let $\arrv=\{\arr_i\}_{i\in\Z}$ and $\servv=\{\serv_i\}_{i\in\Z}$ be the sequences in $\qs_+$ denoting the inter-arrival times and service times in a queue respectively. We assume 
\begin{align}\label{qc}
	\lim_{m\rightarrow -\infty}\sum_{i=m}^{0}(\serv_{i-1}-\arr_i)=-\infty.
\end{align}
Given two sequences $\arrv,\servv\in\qs_+$ satisfying \eqref{qc}, the waiting time customer $i$ must wait in the queue before being served is
\begin{align*}
	w_i=\sup_{j\leq i} \Big(\sum_{k=j}^is_{k-1}-a_k\Big)^+,
\end{align*}
and we define the \textit{waiting time} mapping $W:\qs_+^2  \rightarrow \qs_+$
\begin{align}\label{W}
	\{w_i\}_{i\in\Z}=W(\servv,\arrv).
\end{align}
\begin{remark}
	Although the maps used in this paper are not defined on all the prescribed domain (only on those points satisfying \eqref{qc}), we shall abuse notations and write $W:\qs_+^2  \rightarrow \qs_+$.
\end{remark}
  Define the queuing mapping  $D(\cdot,\cdot):\qs_+^2\rightarrow \qs_+$  through
\begin{align}\label{d}
	\{d_i\}_{i\in\Z}&=D(\servv,\arrv)\\
	d_i= (w_{i-1}&+s_{i-1}-a_i)^-+s_i.\label{d1}
\end{align}	
The third map is the so-called \textit{sojourn} mapping $S_{oj}(\cdot,\cdot):\qs_+^2\rightarrow \qs_+$, which stands for the time a customer spends in the queue once arriving it, and is given by
\begin{align}\label{souj}
	\{t_i\}_{i\in\Z}&=S_{oj}(\servv,\arrv)\\
	t_i&= w_{i}+s_i.\nonumber
\end{align} 
The fourth map   $R(\cdot,\cdot):\qs_+^2\rightarrow \qs_+$ is given by
\begin{align}\label{r}
	\{r_i\}_{i\in\Z}&=R(\servv,\arrv)\\
	r_i=a_i\wedge t_{i-1}&=a_i-(w_{i-1}+s_{i-1}-a_i)^-\label{r1}.
\end{align}
 Lastly, define the \textit{idle time} mapping to be $Er(\cdot,\cdot):\qs^2\rightarrow \qs$, accounting for the time the sever is idle, defined through
\begin{align}\label{er}
	\{e_i\}_{i\in\Z}&=Er(\servv,\arrv)\\
	e_i= (w_{i-1}&+s_{i-1}-a_i)^-.\nonumber
\end{align}	
Although the queueing mappings defined in \eqref{W}--\eqref{r} are usually defined in the literature on $\qs_+$, it is not hard to see that they are well defined on the larger space $\qs=\R^\Z$ as long as \eqref{qc} holds. From here on, we shall think of the queuing mappings as maps from $\qs^2$ to $\qs$. Let $\rhov=\{\rho_i\}_{i\in\N} \in \R_+^{\N}$ be a strictly decreasing sequence of positive numbers. Let $\{I^i\}_{i\in\N}$ be a sequence of independent r.v.s in $\qs$ i.e. $I^i=\{I^i_k\}_{k\in\Z}\in\qs$ is a random sequence of numbers. We further assume that $I^i=\{I^i_k\}_{k\in\Z}$ is a sequence of  i.i.d.  r.v.s with marginal distribution $I^i_0\sim\text{Exp}(\rho_i)$. Let $\h{I}=(I^1,I^2,...)\in \qs^\N$, we then write $\h{I}\sim \nu^{\rhov}$, i.e. $\nu^{\rhov}$ denotes the product measure of i.i.d sequences of Exponential random variables with intensities $\rhov$.  
\subsection{Stationary measures for queues in tandem}
In this subsection we introduce the construction of the multi-species stationary distribution of queues introduced in \cite{FS18}.  
Fix $n\in\N$. Let $\rhov^n=(\rho_1,...,\rho_n)\in \R_+^n$. Define the state spaces
\begin{align*}
	\cY^n&:=\Big\{(I^1,...,I^n)\in \qs^n:\lim_{m\rightarrow \infty}m^{-1}\sum_{i=-m}^0 I^k_i<\lim_{m\rightarrow \infty}m^{-1}\sum_{i=-m}^0 I^{k+1}_i\quad \text{for $1\leq k\leq n-1$}\Big\}\\
	\cY_{\rhov^n}&:=\Big\{(I^1,...,I^n)\in \qs^n:\lim_{m\rightarrow \infty}m^{-1}\sum_{i=-m}^0 I^k_i=(\rho_k)^{-1} \quad \text{for $1\leq k\leq n$}\Big\}.
\end{align*}
Suppose  $\rhov^n=(\rho_1,...,\rho_n)\in \R_+^n$ is a strictly decreasing vector, i.e. $\rho_1>...>\rho_n$. Let $\rho_1<\rho_0$, and let $I^0\in\qs$ such that
\begin{align*}
	\lim_{m\rightarrow \infty}m^{-1}\sum_{i=-m}^0 I^0_i=(\rho_0)^{-1}.
\end{align*}
We define the mapping $\Phi:\cY_{\rhov^n}\rightarrow \qs$ associated with $I^0$  through
\begin{align*}
	\Phi\big((I^1,...,I^n)\big)=(D(I^0,I^1),...,D(I^0,I^n)) \qquad (I^1,...,I^n)\in \cY_{\rhov^n}.
\end{align*}
Now assume that $I^0$ is an i.i.d. sequence with $I^0_0\sim \text{Exp}(\rho_0)$ and that $\h{I}^n=(I^1,...,I^n)\in \cY_{\rhov^n}$ is random and independent of $I^0$. A natural question in the context of queues in tandem is under what distribution $\mu^{\rhov_n}$ of $\h{I}^n$ the mapping $\Phi$ is stationary, i.e.
\begin{align*}
	\Phi\h{I}^n\sim \mu^{\rhov_n},
\end{align*}
or, abusing notation, in terms of distributions 
\begin{align}\label{sm}
	\Phi\mu^{\rhov_n} = \mu^{\rhov_n}.
\end{align}
This question was answered in \cite{FS18} by Fan and Sepp\"al\"ainen. In order to describe their construction of the measure $\mu^{\rhov_n}$, we introduce some notation. First extend the queueing operator $D$ to $D^{(k)}:\cY^k\rightarrow \qs$ through
\begin{align}\label{q}
	D^{(1)}(I^1)&=I^1\nonumber\\\nonumber
	D^{(2)}(I^1,I^2)&=D(I^1,D^{(1)}(I^2))\\\nonumber
	D^{(3)}(I^1,I^2,I^3)&=D(I^1,D^{(2)}(I^2,I^3))\\\nonumber
	&\vdots\\
\text{ and in general }	D^{(k)}(I^1,...,I^k)&=D(I^1,D^{(k-1)}(I^2,...,I^k)) \qquad k\geq 2 .
\end{align}
Note that for $1\leq k <n$
\begin{align}\label{qi}
	D^{(n)}(I^1,...,I^n)=D^{(k+1)}\Big(I^{1},...,I^{k},D^{(n-k)}(I^{k+1},...,I^{n})\Big).
\end{align}
Next  define the map $\md^{(n)}:\cY^n\rightarrow \cY^n$ given by
\begin{align}\label{md}
	[\md^{(n)}(\h{I})]_i=D^{(i)}(I^1,...,I^{i}) \qquad 1\leq i\leq n,
\end{align}
where we used the notation $[\bm{v}]_i=v_i$ for any vector $\bm{v}$ of size $n$ such that $1\leq i\leq n$.
  The following result from \cite[Theorem 5.5]{FS18} is the foundation upon which we build our results.
  \begin{theorem}\cite{FS18}\label{thm:FS2}
  	There exists a unique measure $\mu^{\rhov_n}$ satisfying \eqref{sm}, and that  is defined as the push-forward distribution with respect to the measure $\nu^{\rhov_n}$ and $\md^{(n)}$, i.e.
  	\begin{align}\label{FS}
  		\mu^{\rhov_n}=\nu^{\rhov_n} \circ(\md^{(n)})^{-1}.
  	\end{align}
  \end{theorem}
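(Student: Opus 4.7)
The plan is to prove existence of such a stationary $\mu^{\rhov^n}$ by induction on $n$, and to argue uniqueness separately. For the base case $n=1$, the map $\md^{(1)}$ is the identity, so $\mu^{\rho_1}=\nu^{\rho_1}$. The stationarity $\Phi\nu^{\rho_1}=\nu^{\rho_1}$ is exactly Burke's output theorem: for $I^0,I^1$ independent iid sequences with marginals $\mathrm{Exp}(\rho_0)$ and $\mathrm{Exp}(\rho_1)$ and $\rho_0>\rho_1$, the departure sequence $D(I^0,I^1)$ is again iid $\mathrm{Exp}(\rho_1)$.

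For the inductive step, the key tool I would use is the strong Burke theorem, a consequence of reversibility of the stationary $M/M/1$ queue: under the same assumptions, the pair $(D(I^0,I^1),R(I^0,I^1))$ has the same law as a pair of \emph{independent} iid sequences with marginals $\mathrm{Exp}(\rho_1)$ and $\mathrm{Exp}(\rho_0)$. Writing $J^k:=D^{(k)}(I^1,\ldots,I^k)$ so that $\md^{(n)}(\h I)=(J^1,\ldots,J^n)$, and using the recursion \eqref{qi} to expose the outermost service $I^1$ in each coordinate $J^k$, the plan is to establish a sample-path intertwining identity of the schematic form
\begin{equation*}
D\bigl(I^0,\,D(I^1,X)\bigr)\;=\;D\bigl(\bar I^1,\,D(\bar I^0,X)\bigr),\qquad \bar I^1:=D(I^0,I^1),\quad \bar I^0:=R(I^0,I^1),
\end{equation*}
valid for any $X$ satisfying the growth condition \eqref{qc}. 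Applying this pathwise swap to each coordinate $D(I^0,J^k)$ of $\Phi(\md^{(n)}(\h I))$ and then invoking the strong Burke theorem to replace $(\bar I^1,\bar I^0)$ in distribution by fresh independent iid exponentials with the same marginal rates, one rewrites $\Phi(\md^{(n)}(\h I))$ in distribution as $\md^{(n)}(\tilde I^1,\ldots,\tilde I^n)$ with $\tilde{\h I}\sim\nu^{\rhov^n}$. By definition of $\mu^{\rhov^n}$ as the push-forward, this is precisely the stationarity \eqref{sm}.

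For uniqueness, I would rely on the attractiveness of the dynamics: the basic queueing mapping $D$, and hence $\Phi$, is monotone in each argument in the coordinate-wise order on $\qs$. Any stationary measure supported on $\cY_{\rhov^n}$ has its asymptotic slopes pinned to $\rho_k^{-1}$. Coupling two candidate stationary measures driven by a common external $I^0$, iterating the dynamics, and using a Liggett/Loynes-type argument should force the two measures to coincide, thereby identifying the unique stationary law with $\mu^{\rhov^n}$.

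\textbf{Main obstacle.} The heart of the proof is the induction step. The difficulty is that the common external driving sequence $I^0$ creates correlations between all the output coordinates $D(I^0,J^k)$, so one cannot simply apply single-queue Burke to each coordinate in isolation. The pathwise swap identity that moves $I^0$ past $I^1$ while tracking the exchange of roles is the nontrivial combinatorial ingredient that makes the induction go through, and verifying it in the required generality---together with ensuring that the strong Burke reversibility applies jointly across all $n$ levels---is where the bulk of the technical work lies. Uniqueness, while intuitively clear by attractiveness, also needs some care since the state space $\cY_{\rhov^n}$ is noncompact.
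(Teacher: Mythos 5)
This theorem is quoted from \cite{FS18} and is not proved in the present paper; there is no in-paper argument to compare against. That said, your plan is structurally aligned with the actual proof in Fan--Sepp\"al\"ainen, and the two nontrivial ingredients you flag are exactly the ones that paper (and this one, in its appendix) imports: your ``pathwise swap identity'' $D(I^0,D(I^1,X))=D(D(I^0,I^1),D(R(I^0,I^1),X))$ is precisely identity \eqref{qi3} (FS18, Lemma~4.4), and the ``strong Burke'' statement you invoke --- that $D(I^0,I^1)$ and $R(I^0,I^1)$ are independent iid exponentials with swapped rates and the pair is independent of the sojourn time --- is Lemma~\ref{lem:ind} (FS18, Lemma~B.2). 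Your inductive step closes: after one swap, $D^{(k+1)}(I^0,\dots,I^k)=D(\bar I^1,D^{(k)}(\bar I^0,I^2,\dots,I^k))$, the first coordinate becomes $\bar I^1$, and $(D^{(k)}(\bar I^0,I^2,\dots,I^k))_{k\ge2}$ is $\Phi$ applied to $\md^{(n-1)}(I^2,\dots,I^n)$ with $\bar I^0$ as driver, so the inductive hypothesis applies; independence from the strong Burke theorem lets you replace $(\bar I^1,\bar I^0)$ by a fresh pair. Your uniqueness sketch via attractiveness of $D$ and a Loynes-type coupling is also the standard mechanism and is what FS18 use in substance. Two small remarks: the swap identity \eqref{si3} requires $k+1\geq 3$, so the case $k=1$ must be handled separately (it reduces directly to the base-case Burke theorem); and the uniqueness argument needs the asymptotic density constraint in $\cY_{\rhov^n}$ to pin the Loynes limit down, which you mention but which is where the noncompactness issue you worry about is actually resolved. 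Overall your proposal is a faithful reconstruction of the FS18 proof rather than a genuinely different route.
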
  
 From here to the end of the section we introduce notation and obtain some auxilary results that will be used in the next subsection. For $i,j\in \N$ and $m\in \N\cup \{\infty\}$ such that $m\geq j\geq i$, define $\Pi_{[i,j]}$ to be the projection onto the i'th to j'th components i.e.
\begin{align*}
	\Pi_{[i,j]}(\h{I})=(I^i,...,I^j), \quad \text{ for } \h{I}\in \qs^m.
\end{align*}
We will abbreviate $\Pi_{i}:=\Pi_{[i,i]}$. 
 Let $\rhov=(\rho_1,\rho_2,...)\in\mathbb{R}_+^{\N}$ be a decreasing vector i.e. $\rho_i>\rho_{i+1}$ for $i\in\N$. Define 
 \begin{align*}
 	\cZ:=\Big\{(I^1,I^2,...)\in \qs^{\N}:\text{ the following limit exists }\lim_{m\rightarrow \infty}m^{-1}\sum_{k=-m}^0 I^i_k \quad \text{for all $i\in\N$}\Big\}.
 \end{align*}
 Define the operator $\denp:\cZ\rightarrow \R^{\N}$ through
\begin{align*}
	[\denp(\h{I})]_i=\Bigg[\lim_{m\rightarrow\infty}m^{-1} \sum_{k=-m}^0I^i_k\Bigg]^{-1},\qquad \h{I}\in\cZ,i\in \N. 
\end{align*}
In words, the operator $\denp$ accepts as input a vector of sequences and returns the vector of their densities. Next define, for each $i\in\N$, the operator $\sigma_i:\cZ\rightarrow \cZ$ through
\begin{align}\label{sigma}
	(\sigma_i \h{I})_k=
	\begin{cases}
	\h{I}_k & \text{$k\notin \{i,i+1\}$ or $[\denp(\h{I})]_i\leq [\denp(\h{I})]_{i+1}$}\\
	D(\h{I}_{i},\h{I}_{i+1})   & \text{$k=i$ and $[\denp(\h{I})]_i>[\denp(\h{I})]_{i+1}$} \\
	R(\h{I}_{i},\h{I}_{i+1}) & \text{$k=i+1$ and $[\denp(\h{I})]_i>[\denp(\h{I})]_{i+1}$}. 
	\end{cases}
\end{align}
To see that $\sigma_i$ is well defined and maps into $\cZ$, note that for $i$ such that $[\denp(\h{I})]_i>[\denp(\h{I})]_{i+1}$
\begin{align*}
	[\denp(\sigma_i\h{I})]_k=
	\begin{cases}
	[\denp(\h{I})]_k & k\notin\{i,i+1\}\\
	[\denp(\h{I})]_{i+1} & k=i\\
	[\denp(\h{I})]_i & k=i+1.
	\end{cases}
\end{align*}
We shall also need the following operator: for $m\in \N\cup \{\infty\}$ and $1\leq i \leq m$, let $\sigma^*_i:\R_+^m\rightarrow \R_+^m$
\begin{align*}
	[\sigma^*_i\rhov]_l=
	\begin{cases}
	\rho_l & \text{$l\notin\{i,i+1\}$ or $\rho_i<\rho_{i+1}$}\\
	\rho_{i+1} & \text{$l=i$ and $\rho_i>\rho_{i+1}$}\\
	\rho_{i} & \text{$l=i+1$ and $\rho_i>\rho_{i+1}$}. 
	\end{cases}
\end{align*}
In words, the operator $\sigma^*_i$ takes a vector of length $m$ and switches places between the $i$'th and $i+1$'th elements iff  they are reversely ordered i.e. $\rho_i>\rho_{i+1}$. Note that for $\h{I}\in \qs^m$ we have
\begin{align}\label{sigmai}
	\denp(\sigma_i \h{I})=\sigma^*_i(\denp(\h{I})).
\end{align}
We say the operations $\sigma_i\h{I}$ or $\sigma^*_i\denp(\h{I})$ are {\it{active}} if $[\denp(\h{I})]_i>[\denp(\h{I})]_{i+1}$, otherwise we say they are {\it{repressed}}. Note that the operation $\sigma_i\h{I}$  ($\sigma^*_i\denp(\h{I})$) is active if and only if $\sigma_i\h{I}\neq \h{I}$ ($\sigma^*_i\denp(\h{I})\neq \denp(\h{I})$). The following result establishes the effect of the operation $\sigma_i$ on the distribution $\nu_{\rhov}$. 
\begin{lemma}\label{stat}
	Suppose $\h{I}\sim \nu_{\rhov}$. Fix $i\in \N$ and consider $\sigma_i$ defined in  \eqref{sigma}.  Then
\begin{align}\label{di}
\sigma_i \h{I} \sim \nu^{\sigma^*_i\rhov}.
\end{align}
\end{lemma}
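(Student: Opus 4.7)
The plan is to reduce the lemma to the classical output theorem (Burke's theorem) for the bi-infinite stationary M/M/1 queue. When $\rho_i \le \rho_{i+1}$ the operator $\sigma_i$ (and $\sigma_i^*$) is the identity and the claim is trivial, so I focus on the nontrivial case $[\denp(\h I)]_i=\rho_i>\rho_{i+1}=[\denp(\h I)]_{i+1}$.

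First I would peel off the components not touched by $\sigma_i$. Since $\sigma_i$ alters only the $i$th and $(i{+}1)$th entries, and since under $\nu^{\rhov}$ the coordinates $I^1,I^2,\dots$ are mutually independent, the unchanged coordinates $(\sigma_i \h I)^j = I^j$ for $j\notin\{i,i+1\}$ remain i.i.d.\ $\mathrm{Exp}(\rho_j)$, independent of any functional of $(I^i,I^{i+1})$. It therefore suffices to prove the following two-line statement: if $S\sim$ i.i.d.\ $\mathrm{Exp}(\rho_i)$ and $A\sim$ i.i.d.\ $\mathrm{Exp}(\rho_{i+1})$ are independent with $\rho_i>\rho_{i+1}$, then the pair $(D(S,A),R(S,A))$ is distributed as a pair of independent i.i.d.\ sequences with marginals $\mathrm{Exp}(\rho_{i+1})$ and $\mathrm{Exp}(\rho_i)$ respectively. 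I would begin by verifying that condition \eqref{qc} holds almost surely in this regime: the increments $s_{i-1}-a_i$ have negative mean $1/\rho_i-1/\rho_{i+1}<0$, so by the strong law their partial sums drift to $-\infty$; in particular the suprema defining $w_i$ are attained almost surely, and all of $W$, $D$, $R$ are well-defined.

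The marginal claim that $D(S,A)$ is i.i.d.\ $\mathrm{Exp}(\rho_{i+1})$ is Burke's classical output theorem for the stationary M/M/1 queue with service rate $\rho_i$ and arrival rate $\rho_{i+1}$. The companion marginal claim for $R(S,A)$ and, crucially, the joint independence, follows from the time-reversal symmetry of the stationary M/M/1 queue: run in reverse time, the queue is itself a stationary M/M/1 queue whose driving data is precisely $(R(S,A),D(S,A))$ with service rate $\rho_{i+1}$ and arrival rate $\rho_i$, and the reversibility of the joint stationary law $(w_i,s_i,a_i)$ combined with Burke forces the desired product structure. The step I expect to be the main obstacle is this joint independence statement, since the marginal Burke theorem is not enough; I would handle it either through the explicit time-reversal identification just sketched, or alternatively by an inductive application of the memoryless property of the exponential to the recursion $w_i=(w_{i-1}+s_{i-1}-a_i)^+$ to decouple the output sequence from the reflected idle-time structure. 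In either approach the only additional care needed beyond the classical one-sided argument is to lift it to the bi-infinite setting using \eqref{qc}, which is routine once stability has been secured as above.
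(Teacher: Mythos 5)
Your reduction to the two-coordinate case (peeling off the coordinates untouched by $\sigma_i$ and using their independence) and the invocation of the joint Burke/output property of $(D,R)$ for two independent stationary i.i.d.\ exponential sequences is exactly the paper's approach; the paper simply cites this joint statement as Lemma~\ref{lem:ind}, i.e.\ \cite[Lemma B.2]{FS18}, whereas you sketch the classical reversibility proof of that lemma without fully carrying it out. Since you correctly identify both the reduction and the key two-sequence Burke-type fact with the right intensities, this is essentially the same argument as in the paper.
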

\begin{proof}
	From the law of large numbers applied to each element of $\h{I}$,  with probability $1$ $\denp(\h{I})=\rhov$. This means there are two cases to check - a) $[\denp(\h{I})]_i<[\denp(\h{I})]_{i+1}$ a.s.- here nothing happens with probability $1$ and so \eqref{di} holds. (b) $[\denp(\h{I})]_i>[\denp(\h{I})]_{i+1}$ a.s. - here with probability $1$, the application of $\sigma_i$ results in 
	\begin{align*}
		\sigma_i\h{I}=\big(\Pi_{[1,i-1]}(\h{I}),D(I^i,I^{i+1}),R(I^i,I^{i+1}),\Pi_{[i+2,\infty)}(\h{I})\big).
	\end{align*}
	By hypothesis $\h{I}\sim \nu_{\rhov}$, the result now follows from  
	 Lemma \ref{lem:ind}  and  \eqref{sigmai}.
\end{proof}
In what follows, if $\h{I}=(I^1,...,I^n)\in\qs^n$, we denote $D^{(n)}(\h{I})=D^{(n)}(I^1,...,I^n)$. Next, define 
\begin{align}
\sigma_{[i,j]}\h{I}= \sigma_i\sigma_{i+1}\cdots\sigma_j \h{I}\label{sigma2}.
\end{align}
 For $c\in\R$, we let $\mathbf{c}\in\qs$ denote the constant sequence. i.e. $\mathbf{c}_i=c$ for all $i\in\Z$. We denote  $\mathbf{c}^k=(\underbrace{\mathbf{c},...,\mathbf{c}}_{\text{k times}})$. The following result will be used later in the paper, it shows that translation of the input of the function $\md^{(n)}$ results in translation of the output.
\begin{lemma} \label{lem:const}
	For every $n\in\N$ and $c\in\R$
	\begin{align}
		\md^{(n)}(\h{I})-\mathbf{c}^n=\md^{(n)}(\h{I}-\mathbf{c}^n) \label{eq11}.
	\end{align}
\end{lemma}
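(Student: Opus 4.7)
The plan is to unpack the componentwise definition of $\md^{(n)}$ and reduce to a translation-equivariance property of the single queueing map $D$, then propagate it through the iterated map $D^{(k)}$ by induction.

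First I would observe that by the componentwise definition
\begin{align*}
[\md^{(n)}(\h{I})]_i = D^{(i)}(I^1,\dots,I^i), \qquad 1 \le i \le n,
\end{align*}
the claimed identity $\md^{(n)}(\h{I}) - \mathbf{c}^n = \md^{(n)}(\h{I}-\mathbf{c}^n)$ is equivalent to proving, for each $1 \le k \le n$,
\begin{align*}
D^{(k)}(I^1-\mathbf{c},\dots,I^k-\mathbf{c}) = D^{(k)}(I^1,\dots,I^k) - \mathbf{c}.
\end{align*}
So it suffices to establish this translation-equivariance of $D^{(k)}$ for every $k \ge 1$.

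The base case $k=1$ is immediate since $D^{(1)}(I^1) = I^1$. For the genuine base case I would prove that the one-stage queueing map $D$ itself is translation-equivariant, i.e.\ $D(\servv-\mathbf{c},\arrv-\mathbf{c}) = D(\servv,\arrv)-\mathbf{c}$. The point is that the waiting times $w_i$ from \eqref{W} depend on $\servv,\arrv$ only through the differences $s_{k-1}-a_k$, and these differences are invariant under the joint shift $(\servv,\arrv) \mapsto (\servv-\mathbf{c},\arrv-\mathbf{c})$. Hence the waiting-time sequence is unchanged. Plugging into \eqref{d1}, the term $(w_{i-1}+s_{i-1}-a_i)^-$ is likewise unchanged while the additive $s_i$ is replaced by $s_i - c$, so the output shifts by exactly $-c$. (The stability condition \eqref{qc} is trivially preserved since both sides of the inner sum shift by the same constant.)

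For the inductive step, assume the identity holds for $D^{(k-1)}$. Using the recursion $D^{(k)}(I^1,\dots,I^k) = D(I^1,D^{(k-1)}(I^2,\dots,I^k))$ from \eqref{q},
\begin{align*}
D^{(k)}(I^1-\mathbf{c},\dots,I^k-\mathbf{c})
&= D\bigl(I^1-\mathbf{c},\,D^{(k-1)}(I^2-\mathbf{c},\dots,I^k-\mathbf{c})\bigr) \\
&= D\bigl(I^1-\mathbf{c},\,D^{(k-1)}(I^2,\dots,I^k)-\mathbf{c}\bigr) \\
&= D\bigl(I^1,\,D^{(k-1)}(I^2,\dots,I^k)\bigr) - \mathbf{c}
 = D^{(k)}(I^1,\dots,I^k) - \mathbf{c},
\end{align*}
where the second equality is the inductive hypothesis and the third is the base case applied to the pair $(I^1, D^{(k-1)}(I^2,\dots,I^k))$. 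This closes the induction and yields \eqref{eq11}. There is no real obstacle here; the only thing to be careful about is confirming that the $w_i$ in the definition of $D$ involves only the differences $s_{k-1}-a_k$, which is why an additive shift in both inputs cancels inside the supremum and survives only through the bare $+s_i$ term in $d_i$.
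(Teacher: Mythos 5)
Your proof is correct and follows essentially the same route as the paper: reduce to translation-equivariance of $D^{(k)}$, verify the base case by noting the waiting times depend only on the differences $s_{k-1}-a_k$ (so only the final $+s_i$ term shifts), and propagate via the recursion \eqref{q}. The paper's base-case computation is written in the same spirit, only more tersely.
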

\begin{proof}
	From the definition \eqref{md}, it is enough to show that 
	\begin{align}\label{eq10}
		D^{(i)}(I^1,...,I^i)-\mathbf{c}=D^{(i)}(I^1-\mathbf{c},...,I^i-\mathbf{c}) \quad 1\leq i \leq n.
	\end{align}
	Clearly \eqref{eq10} holds for $i=1$, the proof for $i\geq2$ will follow by induction on $i$. For the base case $i=2$, it is not hard to see that for all $k\in\Z$
	\begin{align*}
		&[D(I^1,I^2)]_k-c=d_k-c\\
		&=\big(W(I^1-\mathbf{c},I^2-\mathbf{c})_{k-1}+(I^1_{k-1}-c)-(I^2_k-c)\big)^-+I^1_k-c=[D(I^1-\mathbf{c},I^2-\mathbf{c})]_k.
	\end{align*}
	Assume the hypothesis holds up to  $i-1$, using \eqref{q}
	\begin{align*}
		&D^{(i)}(I^1,...,I^i)-\mathbf{c}=D\big(I^1,D^{(i-1)}(I^2,...,I^i)\big)-\mathbf{c}\\ &=D\big(I^1-\mathbf{c},D^{(i-1)}(I^2,...,I^i)-\mathbf{c}\big)=D\big(I^1-\mathbf{c},D^{(i-1)}(I^2-\mathbf{c},...,I^i-\mathbf{c})\big)\\
		&=D^{(i)}(I^1-\mathbf{c},...,I^i-\mathbf{c}).
	\end{align*}
	We have thus proven \eqref{eq10} and so \eqref{eq11}.
\end{proof}
\subsection{The construction $\nc^{(n)}$}\label{subsec:nc}
In this subsection we obtain the main technical tool that underlies the development of much of the main results in this paper. Recall from Theorem \ref{thm:FS2} that in order to obtain the distribution $\mu^{\rhov_n}$, one can simply do the following - start with $\h{I}\sim \nu^{\rhov_n}$ and apply to it the map $\md^{(n)}$. In what comes next, we construct the distribution $\mu^{\rhov_n}$ differently. Although the new construction is arguably more convoluted than $\md^{(n)}$, it  presents an important feature that facilitates much of the results to come. Let us try to explain here the advantage of the new construction $\nc^{(n)}$, which equals $\md^{(n)}$. The map $\nc^{(n)}$ takes as input $\h{I}\sim \nu^{\rhov_n}$ and spits out $(f^1,...,f^n)\in\qs^n$. The construction $\nc^{(n)}$ is such that for each  $2\leq i\leq n$, there exists $u^i\in\qs$ such that $f^i=D(f^{i-1},u^i)$. The upshot, and one of the main results of this section (Proposition \ref{prop:J}), is that the r.v.s (recall \eqref{souj}) $\{S_{oj}(f^{i-1},u^i)_m\}_{2\leq i\leq n}$ are mutually independent, for any $m\in\Z$. This property is important since when the difference between $\rho_{i-1}$ and $\rho_i$ is small, $S_{oj}(f^{i-1},u^i)_m$ gets large which then implies that $f_i=f_{i-1}$ on a fixed compact interval with high probability. We will then use the results in this section to show that, on a fixed compact interval, there are only few $i$'s for which $f^i\neq f^{i+1}$, which is key to show relative compactness in the Skorohord space. We note here that although the map $\nc$ is equivalent to the map constructed in \cite[Section 6.2]{FS18},  its derivation as a sequential sorting of RWs, is new and plays an important role in the proof of Lemma \ref{lem:Fb} (see the last part of Subsection \ref{rl}).  

Let $\rhov=(\rho_1,\rho_2,...)$ be a decreasing sequence and let $\h{I}=(I^1,I^2,...)\sim \nu^{\rhov}$. Set $f^1=I^1$ and for $i=2$
\begin{align*}
	\h{I}^2&=\h{I} \\
	 f^2&=D^{(2)}(\Pi_{[1,2]}\h{I}^2)\\
	u^2=I^2 &\qquad v^2=I^1.
\end{align*}
For $i\geq 3$
\begin{align}
	\h{I}^i&=\sigma_{[1,i-2]}\h{I}^{i-1}\label{Ii}\\
	 f^i&=D^{(i)}(\Pi_{[1,i]}\h{I}^i)\nonumber\\
	u^i=D^{(i-1)}(\Pi_{[2,i]}\h{I}^i) &\qquad v^i=\Pi_{1}\h{I}^i.\label{xieta}
\end{align}
Note that from \eqref{qi} with $k=1$ it follows that
\begin{align}\label{id2}
	f^i=D(v^i,u^i).
\end{align}
If $\rhov_n=(\rho_1,...,\rho_n)$ and $\h{I}^{(n)}\sim\nu^{\rhov_n}$, we define the map $\nc^{(n)}:\qs^{n}\rightarrow \qs^n$ by
\begin{align}\label{nc}
	\nc^{(n)}(\h{I}^{(n)})=(f^1,...,f^n).
\end{align}
The following is a simple yet important observation.
\begin{lemma}\label{lem:ac}
	Fix $i\geq 4$ and  define $X^k=\sigma_{[k+1,i-2]}\h{I}^{i-1}$ for any $0\leq k\leq i-3$ and $X^{i-2}=\h{I}^{i-1}$. Then for every $0\leq k\leq i-2$ the operation $\sigma_k X^k$ is active.
\end{lemma}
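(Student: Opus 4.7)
The strategy is a direct induction on $i$, maintaining an explicit formula for the intensity profile of $\h{I}^{i-1}$. Once that formula is in hand, the activeness of each $\sigma_k X^k$ reduces to a comparison between two entries of a known permutation of $\rhov$ and follows from strict monotonicity of $\rhov$.

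Concretely, the invariant I aim to carry is
\begin{align*}
\denp(\h{I}^{j}) = (\rho_{j-1}, \rho_{j-2}, \ldots, \rho_1, \rho_{j}, \rho_{j+1}, \ldots),
\end{align*}
namely, the first $j-1$ slots hold $\rho_1, \ldots, \rho_{j-1}$ in reverse order while positions $\geq j$ are untouched from $\h{I}$. The base case $j=2$ is immediate, since $\h{I}^2 = \h{I} \sim \nu^{\rhov}$ has intensity profile $\rhov$ by the law of large numbers. For the inductive step, I would unfold $\h{I}^{i} = \sigma_{[1, i-2]} \h{I}^{i-1} = \sigma_1 \sigma_2 \cdots \sigma_{i-2} \h{I}^{i-1}$ from right to left, checking activeness and tracking the effect on $\denp$ via \eqref{sigmai} at each swap.

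The first swap, $\sigma_{i-2}$ acting on $X^{i-2} = \h{I}^{i-1}$, compares positions $i-2$ and $i-1$, which by the invariant hold $\rho_1$ and $\rho_{i-1}$; strict monotonicity gives $\rho_1 > \rho_{i-1}$, so the operation is active and, by \eqref{sigmai}, swaps these two entries. Iterating: after the block $\sigma_{[k+1, i-2]}$ has been carried out, the intermediate vector $X^k$ has $\rho_{i-1}$ sitting at position $k+1$, the entries $\rho_{i-2-k}, \rho_{i-3-k}, \ldots, \rho_1$ shifted into positions $k+2, k+3, \ldots, i-1$, and all other positions untouched. The next step $\sigma_k X^k$ then compares $\rho_{i-1-k}$ at position $k$ with $\rho_{i-1}$ at position $k+1$, which is again active by monotonicity. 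Once $\sigma_1$ has been applied, $\rho_{i-1}$ sits at position $1$ and the resulting $\denp(\h{I}^i) = (\rho_{i-1}, \rho_{i-2}, \ldots, \rho_1, \rho_i, \ldots)$ matches the invariant for $j = i$, closing the induction.

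The main obstacle is mostly notational: lining up the index $k$ of the lemma with the correct factor $\sigma_k$ in the composition $\sigma_{[1, i-2]} = \sigma_1 \sigma_2 \cdots \sigma_{i-2}$ (recall that $\sigma_{i-2}$ is applied first, so $X^{k} \mapsto X^{k-1}$ is realized by $\sigma_k$). No deeper probabilistic input is needed beyond Lemma \ref{stat}, which ensures that the $\denp$-profile tracked above is a.s.\ equal to the actual intensities encountered, so activeness is determined deterministically by the bookkeeping.
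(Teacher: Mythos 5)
Your proof is correct and follows essentially the same approach as the paper: peel off $\sigma_{[1,i-2]}$ factor by factor, track the intensity profile via \eqref{sigmai}, and conclude activeness of each $\sigma_k$ from strict monotonicity of $\rhov$. The only difference is that you carry a slightly stronger invariant (the exact reversed permutation $\denp(\h{I}^j)=(\rho_{j-1},\dots,\rho_1,\rho_j,\dots)$), whereas the paper only tracks the inequality $[\denp(X^k)]_j>[\denp(X^k)]_{k+1}$ for $j\le k$ — the extra precision is not needed for the conclusion but the inductive mechanism is identical.
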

\begin{proof}
	First note that from \eqref{Ii}, $\h{I}^{i-1}$ is obtained from $\h{I}$ by applying $\sigma$ operations affecting only the first to $i-2$  elements. As $\Pi_{[1,n]}\h{I}\in\cY^n$ for every $n\in\N$, it follows that
	\begin{align*}
		[\denp(\h{I}^{i-1})]_j>[\denp(\h{I}^{i-1})]_{i-1} \quad \text{ for $1 \leq j<i-1$},
	\end{align*}
	and therefore that
	\begin{align}\label{bc}
		 \text{$\sigma_{i-2}\h{I}^{i-1}$ is active and  $[\denp(\sigma_{i-2}\h{I}^{i-1})]_j>[\denp(\sigma_{i-2}\h{I}^{i-1})]_{i-2} \quad \text{ for $1 \leq j<i-2$}$}.
	\end{align}
	Continuing by induction on $k$ with \eqref{bc} as our base case we conclude the result.
\end{proof}
The next lemma shows that $\nc^{(n)}$ is indeed a representation of $\md^{(n)}$.
\begin{lemma}
	Fix $n\in\N$. Then
\begin{align*}
	\md^{(n)}(\Pi_{[1,n]}\h{I})=\nc^{(n)}(\Pi_{[1,n]}\h{I})
\end{align*}
\end{lemma}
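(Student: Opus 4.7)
The plan is to prove the claim componentwise by induction on $i$, establishing that $[\nc^{(n)}(\Pi_{[1,n]}\h{I})]_i = f^i = D^{(i)}(I^1,\ldots,I^i) = [\md^{(n)}(\Pi_{[1,n]}\h{I})]_i$ for each $1\le i\le n$. The base cases $i=1$ and $i=2$ are immediate: $f^1=I^1=D^{(1)}(I^1)$ and $f^2=D^{(2)}(I^1,I^2)$ hold by construction.

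For the inductive step, unwinding the recursion \eqref{Ii} gives $\h{I}^i$ as a finite composition of operators $\sigma_j$ with $1\le j\le i-2$ applied to $\h{I}=\h{I}^2$, and by Lemma \ref{lem:ac} every one of these operations is active. Since each $\sigma_j$ only touches components $j,j+1$, the $i$-th component of $\h{I}^i$ is still $I^i$. Hence it suffices to prove the following key invariance: for any $j\le i-1$ and any admissible $\h{I}$,
\[
D^{(i)}(\Pi_{[1,i]}\sigma_j\h{I})=D^{(i)}(\Pi_{[1,i]}\h{I}).
\]
When $\sigma_j$ is repressed the two sides coincide trivially, so one only needs to handle the active case. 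Applying \eqref{qi} with $k=j-1$ to both sides peels off the unchanged first $j-1$ coordinates, and reduces the invariance claim to the special case $j=1$:
\[
D^{(m)}\bigl(D(A,B),R(A,B),C^3,\ldots,C^m\bigr)=D^{(m)}(A,B,C^3,\ldots,C^m),
\]
where $m=i-j+1$, $A=I^j$, $B=I^{j+1}$. Using \eqref{q} once more with $X=D^{(m-2)}(C^3,\ldots,C^m)$, this further collapses into the single core algebraic identity
\[
D\bigl(A,D(B,X)\bigr)=D\bigl(D(A,B),D(R(A,B),X)\bigr),\qquad A,B,X\in\qs.
\]

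The main obstacle is this core identity, which is the classical tandem queue output relation: it says the output of two queues in tandem is invariant under the $(D,R)$-reshuffling of service and arrival processes. Its proof is a direct, although non-trivial, computation from the definitions \eqref{W}, \eqref{d}, \eqref{r}, by tracking the waiting times on both sides using the Lindley recursion $w_i=(w_{i-1}+s_{i-1}-a_i)^+$ and the algebraic decomposition $x=(x)^+-(-x)^+=(x)^++(x)^-$ in \eqref{d1} and \eqref{r1}. This identity is standard in the RSK/queueing literature and is implicitly present in \cite{FS18}.

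Once the core identity is granted, the inductive step is finished by iteration: writing $\h{I}^i$ as a word $\sigma_{j_1}\cdots\sigma_{j_\ell}\h{I}$ with $j_k\le i-2$ and applying the invariance one letter at a time yields
\[
f^i=D^{(i)}(\Pi_{[1,i]}\h{I}^i)=D^{(i)}(\Pi_{[1,i]}\h{I})=D^{(i)}(I^1,\ldots,I^i),
\]
which closes the induction and proves the lemma.
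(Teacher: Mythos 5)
Your proposal takes essentially the same route as the paper: it reduces the componentwise identity $f^i=D^{(i)}(\Pi_{[1,i]}\h{I})$ to the invariance of $D^{(m)}$ under $\sigma_j$ for $j\le m-2$, peeled down via \eqref{qi} to the core tandem-queue identity $D(A,D(B,X))=D(D(A,B),D(R(A,B),X))$, exactly as the paper does through \eqref{si3} and \eqref{qi3} (the latter quoted from \cite[Lemma 4.4]{FS18}). One small slip: you state the invariance for $j\le i-1$, but at $j=i-1$ it would read $D(D(A,B),R(A,B))=D(A,B)$, which your core identity does not cover; fortunately only $j\le i-2$ ever occurs, as you yourself note when unwinding $\h{I}^i$.
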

\begin{proof}
	For $i\in [n]$ we must show that 
	\begin{align*}
		[\cD^{(n)}(\Pi_{[1,n]}\h{I})]_i=f^i,
	\end{align*}
	where $f^i$ was defined in \eqref{Ii}. By definition 
	\begin{align*}
		[\cD^{(n)}(\Pi_{[1,n]}\h{I})]_i=D^{(i)}(\Pi_{[1,i]}\h{I}) \quad 1\leq i \leq n.
	\end{align*}
	Note that $\h{I}^i$ is obtained from $\h{I}$ by a sequence of $\sigma$ operations affecting only the first to $i-1$'th elements of the sequence $\h{I}$ i.e. 
	\begin{align*}
		\h{I}^i_j=\h{I}_j \qquad \forall i\leq j.
	\end{align*}
	we see that 
	\begin{align*}
		f^i=D^{(i)}(\Pi_{[1,i]}\h{I}^i)\stackrel{\eqref{si3}}{=}D^{(i)}(\Pi_{[1,i]}\h{I})\stackrel{\eqref{md}}{=}[\cD^{(n)}(\Pi_{[1,n]}\h{I})]_i.
	\end{align*}
\end{proof}
Recall $v^i$ and $u^i$ from \eqref{xieta}. The following identities will come useful in the sequel. 
\begin{lemma} 
For $i\geq 3$
\begin{align}
	v^{i-1}=\Pi_1\Big(\sigma_{[2,i-2]}\h{I}^{i-1}\Big)\label{etar}\\
	u^{i-1}=\Pi_2\Big(\sigma_{[2,i-2]}\h{I}^{i-1}\Big)\label{xir}
\end{align}
and 
\begin{align}
	D(v^{i-1},u^{i-1})=\Pi_1\Big(\h{I}^{i}\Big)\label{dr}\\
	R(v^{i-1},u^{i-1})=\Pi_2\Big(\h{I}^{i}\Big)\label{rr}.
\end{align}
Moreover, 
\begin{align}\label{vf}
		v^i=D(v^{i-1},u^{i-1})=f^{i-1} \quad \text{ for $i\geq 2$}.
\end{align}
and 
\begin{align}\label{ftf}
	f^i=D(f^{i-1},u^{i}).
\end{align}
\end{lemma}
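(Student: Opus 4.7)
The plan is to establish the six identities in order: first identify the first two coordinates of $\sigma_{[2,i-2]}\h{I}^{i-1}$, then use the decomposition $\h{I}^i=\sigma_1\bigl(\sigma_{[2,i-2]}\h{I}^{i-1}\bigr)$ to read off \eqref{dr} and \eqref{rr}, and finally combine these with \eqref{id2} to derive \eqref{vf} and \eqref{ftf}. A standing observation throughout is that by Lemma \ref{lem:ac} every $\sigma_j$ appearing in our computations is active, so we may use the top case of the definition \eqref{sigma} without caveat.

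First I would prove \eqref{etar}. Each $\sigma_j$ with $j\in\{2,\ldots,i-2\}$ only modifies positions $j$ and $j+1$, so the composition $\sigma_{[2,i-2]}$ never touches position $1$. Hence $\Pi_1(\sigma_{[2,i-2]}\h{I}^{i-1})=\Pi_1\h{I}^{i-1}=v^{i-1}$.

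For \eqref{xir} I would prove the following general identity by induction on $m\geq 0$: whenever the operations involved are active,
\begin{equation*}
\Pi_a\bigl(\sigma_{[a,a+m-1]}Y\bigr)=D^{(m+1)}(Y_a,Y_{a+1},\ldots,Y_{a+m}).
\end{equation*}
The case $m=0$ is trivial. For the inductive step, write $\sigma_{[a,a+m-1]}=\sigma_a\circ\sigma_{[a+1,a+m-1]}$; by the induction hypothesis (applied at position $a+1$) the inner composition places $D^{(m)}(Y_{a+1},\ldots,Y_{a+m})$ in position $a+1$ while leaving position $a$ unchanged at $Y_a$. The outer $\sigma_a$ then replaces position $a$ by $D\bigl(Y_a,D^{(m)}(Y_{a+1},\ldots,Y_{a+m})\bigr)$, which equals $D^{(m+1)}(Y_a,\ldots,Y_{a+m})$ by \eqref{q}. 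Specializing to $a=2$, $m=i-3$, and $Y=\h{I}^{i-1}$ yields \eqref{xir}.

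Once \eqref{etar}--\eqref{xir} are in hand, \eqref{dr} and \eqref{rr} follow instantly: by definition \eqref{Ii}, $\h{I}^i=\sigma_1\bigl(\sigma_{[2,i-2]}\h{I}^{i-1}\bigr)$, and the sequence $\sigma_{[2,i-2]}\h{I}^{i-1}$ carries $v^{i-1}$ in position $1$ and $u^{i-1}$ in position $2$, so the active $\sigma_1$ places $D(v^{i-1},u^{i-1})$ in position $1$ and $R(v^{i-1},u^{i-1})$ in position $2$. For \eqref{vf}, combine \eqref{dr} (which gives $v^i=\Pi_1\h{I}^i=D(v^{i-1},u^{i-1})$) with \eqref{id2} (which gives $D(v^{i-1},u^{i-1})=f^{i-1}$); the base case $i=2$ is immediate from $v^2=I^1=f^1$. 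Finally \eqref{ftf} comes from substituting $v^i=f^{i-1}$ into the identity $f^i=D(v^i,u^i)$ supplied by \eqref{id2}.

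The only non-mechanical step is the inductive identity for $\Pi_a(\sigma_{[a,a+m-1]}Y)$, and even there the main subtlety is merely to ensure all the $\sigma$ operations are active so that one may legitimately invoke the top case of \eqref{sigma}; Lemma \ref{lem:ac} takes care of this. Everything else is a rearrangement of definitions.
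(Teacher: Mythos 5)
Your proof is correct and follows essentially the same route as the paper's: the paper's recursion $h_k=D(\h{I}^{i-1}_k,h_{k+1})$ with $h_k=\Pi_k\sigma_{[k,i-2]}\h{I}^{i-1}$ is exactly your inductive identity $\Pi_a(\sigma_{[a,a+m-1]}Y)=D^{(m+1)}(Y_a,\ldots,Y_{a+m})$ in disguise, and the rest (reading off positions $1,2$ of $\h{I}^i=\sigma_1\sigma_{[2,i-2]}\h{I}^{i-1}$, then combining with \eqref{id2}) is identical.
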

\begin{proof}
	We begin with showing \eqref{etar}, which is equivalent, from the definition of $v^{i}$ in \eqref{xieta}, to showing that 
	\begin{align}\label{eqetar}
		\Pi_{1}\h{I}^{i-1}=\Pi_1\Big(\sigma_{[2,i-2]}\h{I}^{i-1}\Big).
	\end{align}
	As the operation $\sigma_{[2,i-2]}$ does not affect the first element of $\h{I}^{i-1}$, \eqref{eqetar} follows. Next we show \eqref{xir}. Define
	\begin{align*}
		h_k=\Pi_k\sigma_{[k,i-2]}\h{I}^{i-1} \qquad k\in \{2,...,i-2\}.
	\end{align*}
	Using Lemma \ref{lem:ac}, the following recursive relation holds for  $1\leq k \leq i-3$
	\begin{align}\label{rec1}
		h_k&=\Pi_k\sigma_{[k,i-2]}\h{I}^{i-1}=\Pi_k\sigma_k\sigma_{[k+1,i-2]}\h{I}^{i-1}\\\nonumber
		&=\Pi_1\big(D(\h{I}^{i-1}_k,\Pi_{k+1}\sigma_{[k+1,i-2]}\h{I}^{i-1}),R(\h{I}^{i-1}_k,\Pi_{k+1}\sigma_{[k+1,i-2]}\h{I}^{i-1})\big)\\
		&=D(\h{I}^{i-1}_k,\Pi_{k+1}\sigma_{[k+1,i-2]}\h{I}^{i-1})=D(\h{I}^{i-1}_k,h_{k+1})\nonumber
	\end{align}
	with the boundary condition
	\begin{align}\label{rec2}
		h_{i-2}=D\big(\h{I}^{i-1}_{i-2},\h{I}^{i-1}_{i-1}\big).
	\end{align}
	It is not hard to see that from \eqref{rec1}--\eqref{rec2} we have
	\begin{align*}
		\Pi_2\big(\sigma_{[2,i-2]}\h{I}^{i-1}\big)=h_2=D^{(i-2)}\big(\h{I}^{i-1}_{2},...,\h{I}^{i-1}_{i-1}\big)=D^{(i-2)}(\Pi_{[2,i-1]}\h{I}^{i-1})=u^{i-1},
	\end{align*}
	thus proving \eqref{xir}. Finally we prove \eqref{dr}--\eqref{rr}. 
	\begin{align*}
		\Pi_1\big(\h{I}^i\big)&\stackrel{\eqref{Ii}}{=}\Pi_1\big(\sigma_1 \sigma_{[2,i-2]}\h{I}^{i-1}\big)=D\Big(\Pi_1\Big(\sigma_{[2,i-2]}\h{I}^{i-1}\Big),\Pi_2\Big(\sigma_{[2,i-2]}\h{I}^{i-1}\Big)\Big)=D\big(v^{i-1},u^{i-1}\big)\\
		\Pi_2\big(\h{I}^i\big)&\stackrel{\eqref{Ii}}{=}\Pi_2\big(\sigma_1 \sigma_{[2,i-2]}\h{I}^{i-1}\big)=R\Big(\Pi_1\Big(\sigma_{[2,i-2]}\h{I}^{i-1}\Big),\Pi_2\Big(\sigma_{[2,i-2]}\h{I}^{i-1}\Big)\Big)=R\big(v^{i-1},u^{i-1}\big),
	\end{align*}
	where  in both lines, the second equality follows from the definition of $\sigma_i$, while the third equality follows from \eqref{etar}--\eqref{xir}. From \eqref{xieta}, \eqref{id2} and \eqref{dr} we conclude \eqref{vf}. Finally, \eqref{ftf} follows by plugging \eqref{vf} in \eqref{id2}.
\end{proof}

Let $\rhov\in \R_+^{\N}$ be a decreasing vector. In what comes next we use the construction above with $\h{I} \sim \nu^{\rhov}$. Recall $u^i$ and $v^i$ from \eqref{xieta} and define $J^i=S_{oj}(v^i,u^i)\in \qs$ for $i\geq 2$. 
\begin{lemma}\label{lem:indJ}
	Fix $i\geq 2$ and $x\in \Z$. Then $\{J^{j}_x\}_{j\geq i+1}$ is independent of $J^i_x=S_{oj}(v^{i},u^{i})_x$.
\end{lemma}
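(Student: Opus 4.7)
My strategy is to prove the stronger claim that $J^i_x$ is independent of the full sequence $\h{I}^{i+1}$. This suffices, since iterating \eqref{Ii}--\eqref{xieta} starting from $\h{I}^{i+1}$ expresses each $\h{I}^j$, and therefore each $J^j_x$ with $j \geq i+1$, as a deterministic measurable function of $\h{I}^{i+1}$.

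To prove $J^i_x \perp \h{I}^{i+1}$, I would decompose $\h{I}^{i+1}$ into three blocks. The \emph{tail} $\{\Pi_k \h{I}^{i+1}\}_{k \geq i+1}$ coincides with $\{I^k\}_{k \geq i+1}$ because the cascade $\sigma_{[1,i-1]}$ leaves those positions untouched; these are independent of anything built from $I^1, \ldots, I^i$, in particular of $J^i_x$. The \emph{head} $(\Pi_1 \h{I}^{i+1}, \Pi_2 \h{I}^{i+1})$ equals $(D(v^i, u^i), R(v^i, u^i))$ by \eqref{dr}--\eqref{rr}; a Burke-type joint independence, to be extracted from Lemma \ref{lem:ind} together with the memoryless property, gives that for independent iid Exp inputs $v^i, u^i$ with rates $\rho_{i-1} > \rho_i$, the sojourn $S_{oj}(v^i, u^i)_x$ is independent of the pair $\bigl(D(v^i, u^i), R(v^i, u^i)\bigr)$. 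The \emph{middle block} $\{\Pi_k \h{I}^{i+1}\}_{3 \leq k \leq i}$ equals $\{\Pi_k \sigma_{[2,i-1]} \h{I}^i\}_{3 \leq k \leq i}$ (because $\sigma_1$ does not act on positions $\geq 3$) and hence depends only on $(\h{I}^i_2, \ldots, \h{I}^i_i)$; in the internal melonization of this sub-block, $u^i$ plays the role of the ``top line'' (via \eqref{xir}) while the middle block forms the ``lower lines'', and a second application of Burke decouples them. Since $v^i = \h{I}^i_1$ is independent of $(\h{I}^i_2, \ldots, \h{I}^i_i)$ by the product structure of $\h{I}^i$, the pair $(v^i, u^i)$ is independent of the middle block, and hence so is $J^i_x$.

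Combining the three independences via the joint mutual independence of the three blocks, which follows from the permuted product-Exp structure of $\h{I}^{i+1}$ guaranteed by iterating Lemma \ref{stat}, yields $J^i_x \perp \h{I}^{i+1}$. The main obstacle I anticipate is the Burke-type joint independence at the head --- that the sojourn time at a fixed instant is jointly independent of both the departure and the reset sequence in a stationary queue with exponential service and arrivals. This is a discrete-time reversibility statement for an M/M/1-style queue, and will need either a careful extraction from Lemma \ref{lem:ind} or a direct derivation from the memoryless property. A secondary subtlety is rigorously justifying that the top-line/lower-lines Burke decoupling inside the internal melonization on $(\h{I}^i_2, \ldots, \h{I}^i_i)$ really does make $u^i$ and the middle block independent in the required sense.
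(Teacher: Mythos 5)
The strategy fails at the outset: your ``stronger claim,'' that $J^i_x$ is independent of the \emph{full} bi-infinite sequence $\h{I}^{i+1}$, is false. Since $\Pi_{[1,2]}\h{I}^{i+1}=(D(v^i,u^i),R(v^i,u^i))$, the stronger claim would require $S_{oj}(v^i,u^i)_x$ to be independent of the full departure and reset sequences; the Burke-type reversibility in Lemma \ref{lem:ind} \ref{ind1} gives only the one-sided version, namely independence from $\{D(v^i,u^i)_j\}_{j\leq x}$ and $\{R(v^i,u^i)_j\}_{j\leq x}$. The two-sided version cannot hold: writing $t_x=S_{oj}(v^i,u^i)_x$, one has $D(v^i,u^i)_{x+1}=(t_x-u^i_{x+1})^- + v^i_{x+1}$, and since $u^i_{x+1}$ and $v^i_{x+1}$ are independent of $t_x$, a direct computation gives
\begin{align*}
\E\big[D(v^i,u^i)_{x+1}\mid t_x=t\big]=\rho_i^{-1}e^{-\rho_i t}+\rho_{i-1}^{-1},
\end{align*}
which depends on $t$. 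So the ``Burke-type joint independence'' you flagged as the main obstacle is not merely difficult to extract from Lemma \ref{lem:ind}; it is simply not true, and no amount of memorylessness will produce it.

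Because of this the argument must stay one-sided throughout, and this forces a second observation that your outline omits. The paper instead proves that $J^i_x$ is independent of the restriction $\{[\h{I}^{i+1}_z]_l\}_{l\leq x}$ --- using Lemma \ref{stat} to peel off $\Pi_{[3,\infty]}\h{I}'$ in one stroke and Lemma \ref{lem:ind} \ref{ind1} to handle the past of the pair $(D(v^i,u^i),R(v^i,u^i))$, much as you describe --- and then exploits that the queueing operators $D,R,S_{oj}$, hence the $\sigma$'s, ``do not look into the future,'' so that every $J^j_x$ with $j\geq i+1$ is measurable with respect to $\{[\h{I}^{i+1}_z]_l\}_{l\leq x}$ alone. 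Without this adaptedness step, the one-sided independence you could actually prove would not transfer to $\{J^j_x\}_{j\geq i+1}$. Your three-block decomposition is also more involved than necessary, since $\Pi_{[3,\infty]}\h{I}'\perp\Pi_{[1,2]}\h{I}'$ handles the ``tail'' and the ``middle'' simultaneously; but that is a stylistic point compared with the missing restriction to the past.
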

\begin{proof}
	We first claim that it is enough to show that 
	\begin{align}\label{ac}
		\{[\h{I}^j_z]_l\}_{j\geq i+1,l\leq x,z\in\N} \text{ is independent of } J^i_x.
	\end{align}
	 To see why this is true, first note that from \eqref{souj}, for every $j\geq 2$ and $x\in\Z$, $J^j_x$ is a function of $\{u^{j}_l,v^{j}_l\}_{l\leq x}$. Moreover, for $j\geq i+1$, by \eqref{xieta} $\{u^{j}_l,v^{j}_l\}_{l\leq x}$ is a function of $\{[\h{I}^j_z]_l\}_{j\geq i+1,l\leq x,z\in\N}$  and so \eqref{ac}  implies the result. Next we claim that \eqref{ac} can be further weakened to showing that
	 \begin{align}\label{ac2}
	 	\{[\h{I}^{i+1}_z]_l\}_{l\leq x,z\in\N} \text{ is independent of } J^i_x.
	 \end{align}
	 To see why \eqref{ac2} implies \eqref{ac}, first note that the random variables $\{[\h{I}^j_z]_l\}_{j\geq i+1,l\leq x,z\in\Z}$ can be obtained from $\h{I}$ by applying the $\sigma$ operations. Next observe that the operator $\sigma$ does not 'look into the future' i.e. for every $x\in \Z$
	 \begin{align*}
	 	\Big\{\Pi_1\big(\sigma (I^1,I^2)\big)_i,\Pi_2\big(\sigma (I^1,I^2)\big)_i\Big \}_{i\leq x}=\Big\{D(I^1,I^2)_i,R(I^1,I^2)_i\Big\}_{i\leq x} \\
	 	\text{ is measureable with respect to the sigma algebra generated by } \{I^1_i,I^2_i\}_{i\leq x}.
	 \end{align*}
	  This implies that $\{[\h{I}^{j}_z]_l\}_{j\geq i+1,l\leq x,z\in\Z}$  is measurable with respect to the sigma-algebra generated by $\{[\h{I}^{i+1}_z]_l\}_{l\leq x,z\in\Z}$ which shows that \eqref{ac2} implies \eqref{ac}. 
	  
	  We now move on to show \eqref{ac2}. Let $\h{I}'=\sigma_{[2,i-1]}\h{I}^i$. From \eqref{Ii} $\h{I}^{i+1}=\sigma_1\sigma_{[2,i-1]}\h{I}^i=\sigma_1\h{I}'$.  From Lemma \ref{stat} we see that
	  \begin{align*}
	  	\Pi_{[3,\infty]}\h{I}' \text{ is independent of } \Pi_{[1,2]}\h{I}'.
	  \end{align*}
	  From \eqref{etar}--\eqref{xir}
	  \begin{align}\label{id}
	  	\Pi_{[1,2]}\h{I}'=\big(v^i,u^i\big).
	  \end{align}
	  It follows that 
	  \begin{align}\label{ind3}
	  	\Pi_{[3,\infty]}\h{I}' \text{ is independent of } \big(D(v^i,u^i),R(v^i,u^i),S_{oj}(v^i,u^i)\big).
	  \end{align}
	  From \eqref{ind3} and Lemma \ref{lem:ind} \ref{ind1}
	  \begin{align}\label{ind}
	  	\{[D(v^i,u^i)]_j\}_{j\leq x},\{[R(v^i,u^i)]_j\}_{j\leq x},\{\Pi_{[3,\infty]}\h{I}'\}_{j\leq x} \,\,\text{ are independent of }\,\, J^i_x=S_{oj}(v^i,u^i)_x.
	  \end{align}
	  It follows that
	  \begin{align}\label{Ip}
	  	\h{I}^{i+1}&=\sigma_1 \h{I}'=\sigma_1 \Big(\Pi_{[1,2]}\h{I}',\Pi_{[3,\infty]}\h{I}'\Big)= \Big(\sigma_1\Pi_{[1,2]}\h{I}',\Pi_{[3,\infty]}\h{I}'\Big)\\
	  	&\stackrel{\eqref{id}}{=} \Big(\sigma_1\big(v^i,u^i\big),\Pi_{[3,\infty]}\h{I}'\Big)=\Big(D\big(v^i,u^i\big),R\big(v^i,u^i\big),\Pi_{[3,\infty]}\h{I}'\Big)\nonumber.
	  \end{align}
	  From \eqref{ind} and \eqref{Ip} follows \eqref{ac2}. The proof is now complete.
\end{proof}
The following result is the main feature of the construction $\nc^{n}$.
\begin{corollary}\label{cor:in}
	Fix  $x\in \Z$. Then $\{J^{j}_x\}_{j\geq 1}$ are independent.
\end{corollary}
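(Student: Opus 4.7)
The plan is to reduce the mutual independence of the collection $\{J^j_x\}_{j \geq 2}$ to repeated application of Lemma \ref{lem:indJ}, which already delivers a strong one-versus-rest independence statement.

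Recall the standard fact that a sequence of random variables $Y_2, Y_3, \ldots$ is mutually independent if and only if, for every $i \geq 2$, the variable $Y_i$ is independent of the collection $(Y_{i+1}, Y_{i+2}, \ldots)$. One direction is trivial. For the other, an induction on $n$ shows that $Y_2, \ldots, Y_n$ are mutually independent: the base case $n = 2$ is the hypothesis for $i = 2$; for the inductive step one applies the induction hypothesis to $Y_3, \ldots, Y_n$ (the assumption still holds for this shifted collection) and combines it with the independence of $Y_2$ from $(Y_3, \ldots, Y_n)$ to factorize any finite-dimensional distribution.

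Given this reduction, I would simply apply Lemma \ref{lem:indJ} at each $i \geq 2$ to conclude that $J^i_x$ is independent of $\{J^j_x\}_{j \geq i+1}$, which is exactly the hypothesis required above. Then the induction described in the previous paragraph, applied with $Y_j := J^j_x$, gives that $J^2_x, J^3_x, \ldots, J^n_x$ are mutually independent for every $n$. Since mutual independence of a countable family is equivalent to mutual independence of every finite sub-family, this yields the claim.

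There is essentially no obstacle here beyond writing down this standard passage from a one-versus-tail independence statement to full mutual independence. All the genuine work — identifying the correct random variables $J^i_x$, isolating $(v^i, u^i)$ as a marginal of $\sigma_{[2,i-1]} \h{I}^i$ whose tail is independent of it by Lemma \ref{stat}, and checking that the $\sigma$-operations do not look into the future — has already been carried out in the proof of Lemma \ref{lem:indJ}.
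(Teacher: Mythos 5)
Your proposal is correct and matches the paper's (one-line) proof: the paper simply says the corollary ``follows by induction and Lemma \ref{lem:indJ},'' and you have filled in exactly that induction, passing from the one-versus-tail independence supplied by Lemma \ref{lem:indJ} to mutual independence of every finite initial segment and hence of the whole countable family.
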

\begin{proof}
	This follows by induction and Lemma \ref{lem:indJ}.
\end{proof}
Next we would like to determine the distribution of $J^i_x$. In order to do that, we first have to determine the distributions of $v^i$ and $u^i$.
\begin{lemma}\label{lem:int}
	Fix $i\in \N$. Then $v^i$ and $u^i$ are independent i.i.d. sequences with the following marginals
	\begin{align}
		&v^i_0\sim \mathrm{Exp}(\rho_{i-1})\label{etad}\\
		&u^i_0\sim \mathrm{Exp}(\rho_{i})\label{xid}.
	\end{align}
\end{lemma}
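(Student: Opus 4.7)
The plan is to identify the joint distribution of $\h{I}^i$ explicitly, and then read off the marginals and independence of $v^i$ and $u^i$ from that distribution. The two key inputs are Lemma \ref{stat} (to track how the product measure evolves under $\sigma$ operations) and Burke's theorem (presumably encoded in the earlier Lemma \ref{lem:ind} mentioned in the proof of Lemma \ref{stat}).

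The first step is to show, by induction on $i$, that $\h{I}^i\sim \nu^{\rhov^{(i)}}$ where
\begin{align*}
\rhov^{(i)}=(\rho_{i-1},\rho_{i-2},\ldots,\rho_1,\rho_i,\rho_{i+1},\ldots).
\end{align*}
The base case $i=2$ is immediate since $\h{I}^2=\h{I}\sim\nu^{\rhov}$. For the inductive step, apply Lemma \ref{stat} iteratively along the composition $\sigma_{[1,i-2]}=\sigma_1\sigma_2\cdots\sigma_{i-2}$ together with \eqref{sigmai} to obtain $\h{I}^i\sim \nu^{\sigma^*_{1}\cdots\sigma^*_{i-2}\rhov^{(i-1)}}$. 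Assuming the explicit form of $\rhov^{(i-1)}$, a direct bubble-sort computation shows that each swap in $\sigma^*_{1}\cdots\sigma^*_{i-2}$ (starting from $\sigma^*_{i-2}$ innermost) is active, because strict decrease of $\rhov$ gives $\rho_j>\rho_{i-1}$ for every $j<i-1$. The cumulative effect is to slide $\rho_{i-1}$ from position $i-1$ all the way to position $1$, producing the claimed $\rhov^{(i)}$.

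With this in hand, independence of $v^i$ and $u^i$ is immediate from the product structure of $\h{I}^i\sim\nu^{\rhov^{(i)}}$: since $v^i=\Pi_1\h{I}^i$ and $u^i=D^{(i-1)}(\Pi_{[2,i]}\h{I}^i)$ are measurable functions of the disjoint coordinate blocks $\Pi_1\h{I}^i$ and $\Pi_{[2,\infty)}\h{I}^i$, they are independent. The marginal $v^i_0\sim\mathrm{Exp}(\rho_{i-1})$ is read off directly from $\rhov^{(i)}_1=\rho_{i-1}$. To identify the marginal of $u^i$, I would invoke an iterated Burke-type argument: the coordinates $\Pi_2\h{I}^i,\ldots,\Pi_i\h{I}^i$ are mutually independent i.i.d.\ exponential sequences at rates $\rho_{i-2},\rho_{i-3},\ldots,\rho_1,\rho_i$ respectively, all of the first $i-2$ rates being strictly greater than $\rho_i$. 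The innermost $D(\Pi_{i-1}\h{I}^i,\Pi_i\h{I}^i)$ is therefore a stable queue whose output is i.i.d.\ $\mathrm{Exp}(\rho_i)$ by Burke's theorem. Proceeding outward through the nested $D$ applications, each new service sequence has rate $\rho_j>\rho_i$ and is independent of the previous output (again by the product structure in Step 1), so Burke's theorem applies at every level and the output marginal stays i.i.d.\ $\mathrm{Exp}(\rho_i)$.

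The main obstacle is not conceptual but bookkeeping: at each nested level of the iterated $D$ one must verify that the incoming service sequence is independent of the intermediate output in order to apply Burke's theorem with the correct marginals. Once the product-measure form of $\h{I}^i$ in Step 1 is established, this independence is transparent coordinate-by-coordinate, and the lemma follows.
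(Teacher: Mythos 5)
Your proof is correct, and the overall strategy—tracking the push-forward of $\nu^{\rhov}$ under successive $\sigma$ operations via Lemma \ref{stat} to make the product structure of $\h{I}^i$ explicit—is the same engine that drives the paper's argument. The one genuine difference is how you handle $u^i$. The paper invokes the identity \eqref{xir}, namely $u^i=\Pi_2\big(\sigma_{[2,i-1]}\h{I}^i\big)$, so that $u^i$ is literally a coordinate of a product of i.i.d.\ exponential sequences; independence from $v^i=\Pi_1\big(\sigma_{[2,i-1]}\h{I}^i\big)$ and the exponential i.i.d.\ form of $u^i$ then come for free from a single application of Lemma \ref{stat}, and the intensity is read off from a density computation. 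You instead work directly from the defining formula $u^i=D^{(i-1)}(\Pi_{[2,i]}\h{I}^i)$, getting independence from the disjoint coordinate blocks and the marginal $\mathrm{Exp}(\rho_i)$ by iterating Burke's theorem (Lemma \ref{lem:ind}) through the nested $D$'s, checking independence at each level from the product structure. Both arguments are sound; the paper's is shorter because \eqref{etar}--\eqref{xir} have already absorbed the Burke iteration into the $\sigma$-based representation, while yours is self-contained relative to that earlier lemma at the cost of re-doing the iteration. One small thing worth keeping explicit, which you did note: after identifying $\rhov^{(i)}=(\rho_{i-1},\rho_{i-2},\dots,\rho_1,\rho_i,\rho_{i+1},\dots)$, the block $\Pi_{[2,i]}$ has densities $(\rho_{i-2},\dots,\rho_1,\rho_i)$, so every service rate encountered in the nested $D$'s strictly exceeds the arrival rate $\rho_i$, which is exactly the stability condition needed for Burke's theorem to apply at each level.
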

\begin{proof}
	By Lemma \ref{stat} and \eqref{etar}--\eqref{xir}, both $v^i$ and $u^i$ are independent sequences of Exponential random variables. It is left to verify the intensities in \eqref{etad}--\eqref{xid}. Note that from \eqref{Ii}, the $\sigma$ operators affect only the first to $i-1$ elements of $\h{I}$ to obtain $\h{I}^i$. This implies that 
	\begin{align*}
		\Pi_i\h{I}^i=[\h{I}]_i.
	\end{align*}
	and therefore that
	\begin{align*}
		[\denp(\h{I}^i)]_k>[\denp(\h{I}^i)]_i \quad 1 \leq k < i.
	\end{align*}
	It follows that
	\begin{align*}
		[\denp(v^i)]_1\stackrel{\eqref{xieta}}{=}[\denp(\h{I}^i)]_1=[\denp(\sigma_{[1,i-2]}\h{I}^{i-1})]_1\stackrel{\eqref{sigmai}}{=}[\sigma^*_{[1,i-2]}(\denp(\h{I}^{i-1}))]_1=[\denp(\h{I}^{i-1})]_{i-1}=\rho_{i-1},
	\end{align*}
	concluding \eqref{etad}. Finally, \eqref{xid} follows from 
	\begin{align*}
		\denp(u^i)\stackrel{\eqref{xieta}}{=}\denp(D^{(i-1)}(\Pi_{[2,i]}\h{I}^i))=\denp([\h{I}]_i)=\rho_i.
	\end{align*}
\end{proof} 
The following proposition is the main result of this section.
\begin{proposition}\label{prop:J}
	Let $\rhov\in \R_+^{\N}$ be a decreasing vector. Assume the construction above and fix $x\in\Z$. The random variables $\{J^i_x\}_{i\in\N}$ are independent and 
	\begin{align}\label{J}
		J^i_x\sim \mathrm{Exp}(\rho_{i-1}-\rho_i).
	\end{align}
\end{proposition}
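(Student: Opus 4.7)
The independence part of the statement is already done: Corollary~\ref{cor:in} applied at the fixed $x$ states precisely that $\{J^j_x\}_{j \geq 2}$ are mutually independent, so the only remaining work is to identify the one-dimensional marginal of $J^i_x$ for each $i \geq 2$.

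The key input for the marginal is Lemma~\ref{lem:int}, which gives that $v^i$ and $u^i$ are independent, $v^i$ is an i.i.d.\ sequence with $v^i_0 \sim \mathrm{Exp}(\rho_{i-1})$, and $u^i$ is an i.i.d.\ sequence with $u^i_0 \sim \mathrm{Exp}(\rho_i)$. Translating through the queueing dictionary of Subsection~\ref{subsec:prelim}, this puts us exactly in the setting of an $M/M/1$ queue with service times $v^i$ of rate $\rho_{i-1}$ and inter-arrival times $u^i$ of rate $\rho_i$; strict monotonicity of $\rhov$ guarantees $\rho_{i-1} > \rho_i$, so the queue is stable and condition \eqref{qc} holds almost surely by the law of large numbers. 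Hence the two-sided stationary sojourn time $J^i_x = S_{oj}(v^i,u^i)_x$ is well-defined a.s.

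I would then invoke the classical Burke/Lindley calculation for the stationary sojourn time of an $M/M/1$ queue: when services are i.i.d.\ $\mathrm{Exp}(\mu)$ and inter-arrivals are i.i.d.\ $\mathrm{Exp}(\lambda)$ with $\lambda < \mu$, the stationary waiting time $\wav_x$ is a mixture $(1 - \lambda/\mu)\delta_0 + (\lambda/\mu)\mathrm{Exp}(\mu - \lambda)$, and adding the independent $\mathrm{Exp}(\mu)$ next service via $t_x = w_x + s_x$ gives, by a short computation using the memoryless property, $S_{oj}(\servv,\arrv)_x \sim \mathrm{Exp}(\mu - \lambda)$. Applying this with $\mu = \rho_{i-1}$ and $\lambda = \rho_i$ yields $J^i_x \sim \mathrm{Exp}(\rho_{i-1} - \rho_i)$, which is exactly \eqref{J}. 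The only subtle point to verify is that the two-sided stationary formulation in \eqref{W}--\eqref{souj} gives the same one-dimensional marginal as the usual forward-in-time stationary $M/M/1$ queue; this is standard once \eqref{qc} is verified a.s., since in the stable regime the two-sided stationary solution is unique and coincides with the time-stationary extension of the Markov chain started from its invariant distribution.
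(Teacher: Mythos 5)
Your proof is correct and follows essentially the same route as the paper: independence from Corollary~\ref{cor:in}, marginals of $v^i$ and $u^i$ from Lemma~\ref{lem:int}, and the exponential law of the stationary sojourn time via the standard $M/M/1$ calculation — which the paper packages as Lemma~\ref{lem:sourj} (waiting time $w_0 \sim \mathrm{BerExp}(\rho_+-\rho_-,\rho_+)$ plus an independent $\mathrm{Exp}(\rho_+)$ service, summing to $\mathrm{Exp}(\rho_+-\rho_-)$). Rederiving that lemma from the Burke/Lindley formula instead of citing it is a purely presentational difference; the substance is identical.
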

\begin{proof}
	Independence of $\{J^i_x\}_{i\in\N}$ is a consequence of Corollary \ref{cor:in}. \eqref{J} follows from Lemma \ref{lem:sourj} and Lemma \ref{lem:int}.
\end{proof}

 \begin{figure}[t]
	\centering
	
	\includegraphics[scale=1]{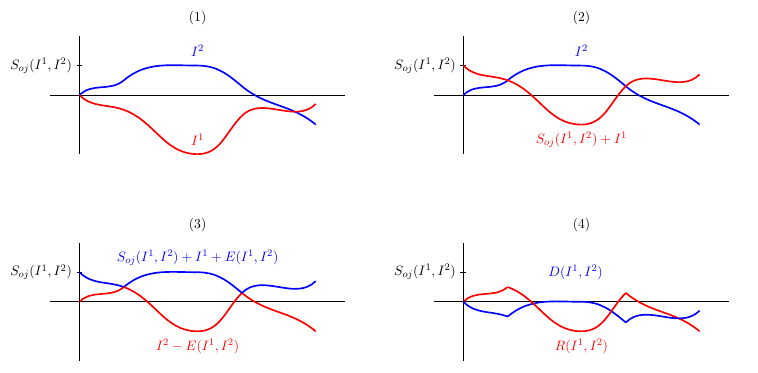}
	\caption{The transformation $(I^1,I^2)\mapsto \big(R(I^1,I^2),D(I^1,I^2)\big)$ restricted to $\N$ as melonization of two random walks. To simplify notation we annotate the curves by their sequence of differences rather than their true value i.e. we write $I^1$ instead of $S^{0,\cdot}(I^1)$. (1) The input of the melonization is two infinite sequences $I^1$ and $I^2$ from which we obtain $S_{oj}(I^1,I^2)$. (2) The curve $I^1$ is raised  by $S_{oj}(I^1,I^2)$. (3) We construct an upper (blue) and lower (red) curves (4) We lower the upper curve by $S_{oj}(I^1,I^2)$ to obtain $D(I^1,I^2)$ (blue). The algorithm holds up to error of $O(1)$ which disappears upon scaling.}
	\label{fig:mel}
\end{figure}    
  \begin{figure}[t]
	\centering
	
	\includegraphics[scale=0.9]{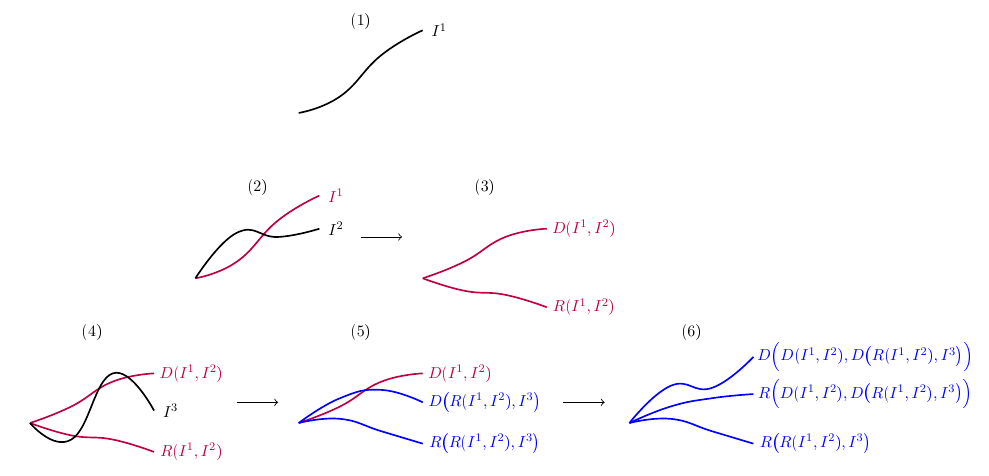}
	\caption{Illustration of the construction of $\nc$ as melonization of random walks. From the top, at each of the three lines of the figure, we construct and element of the vector
		 $\nc(I^1,I^2,I^3)=\Big(I^2,D(I^1,I^2),D\big(D(I^1,I^2),D(R(I^1,I^2),I^3)\big)\Big)$. At each line, the black curve represents a new RW that is independent of all other RW introduced in earlier stages of the algorithm. Curves with the same colour are ordered. In (1) we introduce the first RW, $I^1$, which is also the first element of $\nc$. In (2)-(3) we construct the second element of the $\nc$ as follows. We start with the output of (1), to which we add the RW $I^2$. We then order the two RWs to obtain (3). The top line, $D(I^1,I^2)$ is the second element of $\nc$. In (4)--(6) we construct the third element of $\nc$ in a similar fashion. Use the line ensemble from $(3)$ as an input and add $I^3$ to it. We then start sorting the curves from bottom to top, $I^3$ is first sorted with the bottom line in (5) yielding the two blue curves, the top line of which, is then sorted with respect to the red line to give the ordered ensemble in (6). The top line of this ensemble is the third element of $\nc$.}
	\label{fig:mel2}
\end{figure}    
\subsection{The operator $\nc$ as melonization of random walks}\label{subsec:mel}
	In this subsection we discuss in more detail the connection of the map $\nc$ from Subsection \ref{subsec:nc} to the non-intersecting-RWs-RSK correspondence of O'Connell and Yor from \cite{OcY02}. 
	\subsubsection*{Two types of melonization}
	The connection between queues and LPP goes back to the works of Szczotka and Kelly \cite{SK90}, and Glynn and Whitt \cite{GW91}. In \cite{OcY02}, O'Connel and Yor showed that a sequence of independent Brownian motions (Bm) conditioned not to intersect can be represented as a sequence of queues in tandem. Let us now sketch their idea. Let  $\h{X}=(X^1,X^2,...,X^N)$ be $N\geq 1$ independent Bms on $[0,\infty)$. Let
	\begin{align*}
		E(f,g)(t)=\max_{s\in [0,t]}g(s)-f(s),
	\end{align*}
	and define
	\begin{align}
		\alpha^U(f,g)(t)&=f(t)+E(f,g)(t)\label{m}\\
		\alpha^D(f,g)(t)&=g(t)-E(f,g)(t)\label{m1}.
	\end{align}
	We then define 
	\begin{align*}
		\alpha_i(\h{X})=\big(X_1,...,\alpha^U(X_i,X_{i+1}),\alpha^D(X_i,X_{i+1}),...,X_N\big).
	\end{align*}
	Next we define
	\begin{align*}
		\hat{\alpha}_i=\alpha_1\cdots\alpha_{i-1}\alpha_i  \quad 1\leq i\leq N-1.
	\end{align*}
Finally let
\begin{align*}
	\cW(\h{X})=\hat{\alpha}_{N-1}\hat{\alpha}_{N-2}\cdots\hat{\alpha}_{1}\h{X}.
\end{align*}
It then holds that $\cW$ has the same distribution as that of $\h{X}$ conditioned on $\{X_1(t)\geq X_2(t)\geq...\geq X_N(t)\}\quad \forall t\geq 0$. Confirming to the literature, we refer to $\cW$ as the \textit{packed melon} of $\h{X}$, where we added the word packed to distinguish it from a similar model we shall shortly discuss. We shall often refer to the algorithm behind the melon as \textit{melonization} or \textit{packed melonization}. From this, and the connection between LPP and queueing theory, follows the connection between the extremal values of LPP and the eigenvalues of the GUE random matrices \cite{Bar01,GTW01}. Indeed, by Greene's Theorem and the results of O'Connell in \cite{Occ03}, the sum of the top $k$ values in $\cW(\h{X})$ at time $t=1$ equals the maximal passage time of $k$ non-intersecting paths on $\h{X}$, starting from  $X^N$ at time $0$ and terminating at time $1$ on $X^1$. These paths, looking like the stripes of a watermelon, are where the name melon comes from. This picture is expected to hold more generally, as was shown by Biane, Bougerol and O'Connell in \cite{Occ03,BBO05} where the connection between non-intersecting RWs and the RSK algorithm was studied, and in fact recently used by Dauvergne and Virag in \cite{DV21} to prove the convergence of various models to the Directed Landscape.

Recall the sequences $\arrv,\servv,\bm{d}$ and $\bm{r}$ defined in Subsection \ref{subsec:prelim}, and let us now try to see how our description of the map $\nc$ is related to the melon of RWs. Define
\begin{align*}
	A_n=\sum_{i=1}^{n}a_i, \quad 
	S_n=\sum_{i=1}^{n}s_i \quad   D_n=\sum_{i=1}^{n}d_i \quad \text{and} \quad R_n=\sum_{i=1}^{n}r_i \quad \text{for $n\geq1$}.
\end{align*}
 Recall the sequence $\{e_i\}_{i\in \Z}=Er(\servv,\arrv)$ from \eqref{er}. From \eqref{se}
 \begin{align*}
 	E_n:=&\sum_{i=1}^{n}e_i=\sum_{i=1}^{n}(w_{i-1}+s_{i-1}-a_i)^-=\Big(\inf_{1\leq i \leq n-1}J_{0}+S_{i-1}-A_{i-1}-a_i\Big)^-\\
 	=&\Big(\sup_{1\leq i \leq n-1}A_{i-1}-S_{i-1}+a_i-s_0-w_0\Big)^+
 \end{align*}
From \eqref{d1} and \eqref{r1} we conclude that
\begin{align}
	D_n&=S_n+E_n\\
	R_n&=A_n-E_n.
\end{align}
Now assume there exists a scaling $\stackrel{N}{\rightarrow}$ such that 
\begin{align*}
	A \stackrel{N}{\rightarrow} f_A, \quad	S \stackrel{N}{\rightarrow} f_S, \quad	E \stackrel{N}{\rightarrow} f_E, \quad	w_0 \stackrel{N}{\rightarrow} W, \quad 	D \stackrel{N}{\rightarrow} f_D, \quad \text{and } 	R \stackrel{N}{\rightarrow} f_R \quad \text{ as $N\rightarrow \infty$},
\end{align*}
where $f_A,f_S,f_E,f_D,f_R\in C[0,\infty)$ vanish at $0$ and $W\in\R_+$. Assume further that under $\stackrel{N}{\rightarrow}$ the contribution of $|a_i|$ and $|s_i|$ is negligible. Then, putting aside some (serious) technical issues, it follows that
\begin{align}
	f_D&=f_S+E(f_S+W_0,f_A)^+\label{mm}\\
	f_R&=f_A-E(f_S+W_0,f_A)^+\label{mm2}.
\end{align}
Consider Figure \ref{fig:mel} for an illustration of the sorting mechanism for 2 functions. Comparing  \eqref{m}--\eqref{m1} with \eqref{mm} --\eqref{mm2}, we see that the difference between the 2-functions melonization $(f,g)\mapsto (\alpha_1,\alpha_2)$,  used in \cite{OcY02}, and $(f_S,f_A)\mapsto (f_D,f_R)$ -- the one we are using in this paper, comes from the waiting time $W_0$. Alternatively, the difference in the two melonizations comes from the fact that the sequences $\servv$ and $\arrv$ are bi-infinite rather than just infinite in one direction. To distinguish the $(f_S,f_A)\mapsto (f_D,f_R)$ defined through \eqref{mm} --\eqref{mm2} from the packed melon, we shall refer to the former as the stationary melon(ization).
\subsubsection*{From melonziation to $\nc$}
Now that we have some intuition to the basic stationary melonization operation $(\servv,\arrv)\mapsto \big(D(\servv,\arrv),R(\servv,\arrv)\big)$ as a geometric picture of sorting two functions, let us explain how we obtain the mapping $\nc$ from a sequence of such operations. Let $\h{J}\sim \nu_{\rhov}$ for some decreasing vector $\bm{\rho}=(\rho_1,...,\rho_k)$. Recall the operator $\sigma_i$ from \eqref{sigma} and $\sigma_{[i,j]}$ from \eqref{sigma2} and define 
\begin{align}\label{Wm}
	\cW^{(1)}(\h{J})&=J^1\\
	\cW^{(i)}(\h{J})&=\sigma_{[1,i-1]}\Big(\cW^{(i-1)}(\h{J}),J^i\Big) \quad \text{for $2\leq i\leq k$}.
\end{align}
Note that $\cW^{(i)}$ is a vector of size $i$. In words, we receive the vector $\h{J}$ as input and we sort its elements in the following way; having sorted the elements  $J^1$ through $J^{i-1}$ to obtain $\cW^{(i-1)}(\h{J})$, we sort the new (and independent) element $J^i$ from the bottom ($\cW^{(i-1)}_{i-1}$) up to the top line ($\cW^{{(i-1)}}_{1}$) to obtain $\cW^{i}$ (see Figure \ref{fig:mel2}).  Finally we define
\begin{align*}
	\cW^{\text{top}}(\h{J})=(\cW^{(1)}_1,...,\cW^{(k)}_1),
\end{align*}
the vector registering the top most line in each of the iterations $\{\cW^{(i)}\}_{i=1}^k$. We now claim that 
\begin{align}\label{eqv}
	\nc^{(k)}(\h{J})=\cW^{\text{top}}(\h{J}).
\end{align}
To see that, recall the infinite vector of sequences $\h{I}^i$  from \eqref{Ii}, and note that
\begin{align*}
	\Pi_{[1,i-1]}\h{I}^i=\cW^{(i-1)}(\Pi_{[1,k]}\h{I}) \quad i\geq 2,
\end{align*}
and that
\begin{align*}
	f^i=D^{(i)}\big(\Pi_{[1,i]}\h{I}^i\big)=D^{(i)}\big(\Pi_{[1,i-1]}\h{I}^i,I^i\big)=D^{(i)}\big(\cW^{(i-1)}(\Pi_{[1,i-1]}\h{I}),I^i\big).
\end{align*}
Setting the $k$ first elements of $\h{I}$ to be $\h{J}$
\begin{align}\label{fde}
		f^i=D^{(i)}\big(\cW^{(i-1)}(\h{J}),J^i\big) \quad 1\leq i\leq k.
\end{align}
Next note that the operator $D^{(i)}$ applies the operator $D$ successively, hence sorting the sequence $J^i$ through $\cW^{(i-1)}(\h{J})$ from bottom to top. We have thus showed that
\begin{align}\label{DW}
	D^{(i)}\big(\cW^{(i-1)}(\h{J}),J^i\big)=\cW^{(i)}_1 \quad \text{for $1\leq i\leq k$}.
\end{align}
Plugging \eqref{DW} in \eqref{fde} and using \eqref{nc} implies \eqref{eqv}.
\subsubsection*{The effect of large $W_0$}\label{subsec:W}
From Burke's Theorem \cite{Bur56}, it follows that no matter how large  the size  the vector $\h{J}$ ($k$) is, $\cW^{(k)}_1$ cannot be stochastically larger than $J^k$. This is in contrast to the packed melon where $\cW_1 \sim \sqrt{k}$, i.e. the top line of the packed melon grows like square root of the number of lines in the melon. The fact that the system does not "blow up" is of course expected from a stationary system. What is not so straightforward is the fact that the system is kept "in check" by sticking successive lines to one another. This phenomena can be attributed to the role of $W_0$ in the stationary melonization. If the sequence $J^k$ is kept fixed while the number of sequences ($k$) goes to infinity, then from condition \eqref{qc} it follows that the different elements in $\h{J}$ must be close, which one could then expect would lead to large waiting times in the stationary melonziation. A large waiting time $W_0$ in the operation $\sigma_i$ for some $1\leq i \leq k$ can be thought of as a potential well that prevents the $\sigma_i$ from being carried out. Indeed, setting $W_0$ large enough in \eqref{mm}--\eqref{mm2} would imply that $f_D=f_S$ and $f_R=f_A$, and therefore that $\sigma_i(f_S,f_A)=(f_D,f_R)=(f_S,f_A)$. Rendering many of the $\sigma$ operations in \eqref{W} inactive results in many of the elements of $\cW^{\text{top}}(\h{J})$ (and therefore the elements of $\nc^{(k)}(\h{J})$) being equal. This stickiness feature of the Busemann process has been observed in \cite{FS18} where the authors showed that when varying the density $\rho\in(0,1)$, $\rho \mapsto B^{\rho}_{0,e_1}$ is a Poisson process, a phenomena that can be attributed to the coalescence of geodesics.  Our results show that this feature of the Busemann process carries over to its behaviour around a prescribed density when scaled diffusively.  
\subsubsection*{Relevant literature}\label{rl}
Our representation of $\nc^{(n)}$, developed in Subsection \ref{subsec:nc}, is not new and is equivalent to that in \cite[Section 6.2]{FS18} (we thank Timo Sep\"al\"ainen for pointing this out).  However, the derivation of the representation in \cite{FS18} is different and constructed using two triangular arrays rather than one. In particular, the connection to the packed melon did not appear in \cite{FS18} and seems to be new. This observation plays a crucial role in one of the key results in this paper -  Lemma \ref{lem:Fb}, by controlling the top line of the stationary melon by that of the packed one (see the discussion therein). Moreover,  although we did not pursue such a result, we believe this new picture of the map $\md^{(k)}$ as sorting random walks can be used to improve \eqref{ub82} significantly (although not optimally).

The fact that large $W_0$ results in constant values of the Busemann process around a fixed density $\rho$ was first used in \cite{BBS20} to obtain strong local stationary results of the point to point geodesics. In \cite{BF20}, the authors used  similar ideas to obtain the exact exponent for the probability that the point to point geodesic does not coalesce with the stationary one. In this sense, this work is a natural progression as we consider the constant times of the Busemann process across a continuum of densities around a fixed density. 
\section{The Busemann Process}\label{sec:bus}
	In this section we recall the definition and main properties of the Busesmann process, and obtain the necessary results to be used later in the paper. The Busemann Process is a random process parametrized by $0<\rho<1$ and taking values in the space of real valued functions defined on $\Z^2\times \Z^2$. More specifically, for a fixed $0<\rho<1$, the process $B^{\cdot}$ attains the value $B^{\rho}\in \R^{\Z^2 \times \Z^2}$ which is called the Busemann function (associated with the density $\rho$). This function has proven to be an important tool in studying infinite geodesics in LPP on the lattice (where the ideas extend to positive temperature polymers on the lattice). For example, $B^\rho$ allows one to explicitly construct an infinite geodesic starting from any point on the lattice in the direction associated with the density $\rho$. 
	
	This section is divided into two parts, in Section \ref{subsec:Def} we provide the definition of the Busemann process and give its main properties.  Section \ref{subsec:sh1}  shows that under the diffusive scaling, the Busemman process does not take to many values on a compact interval. This will later imply the modulus of continuity condition (Condition \ref{fdc} of Theorem \ref{thm:tc}) for the prelimiting sequence $G^N$.
	\subsection{Definition and main properties}\label{subsec:Def}
	Below we recall the main properties of the Busemann process.
\begin{theorem}\cite[Theorem 4.2]{S18}
	For each $0<\rho<1$ there exists a function $B^{\rho}_{\cdot,\cdot}(\omega):\Z^2\times\Z^2 \rightarrow \R$ and a family  of random weights $\{\hat{\omega}_x\}_{x\in\Z^2}$ such that the following properties hold:
	\begin{enumerate}
		\item for any $x,y,z\in\Z^2$ 
		\begin{align}\label{cocyc}
			B^{\rho}_{x,y}+B^{\rho}_{y,z}=B^{\rho}_{x,z}.
		\end{align}
		\item $B^{\rho}$ is stationary with marginal distribution
		\begin{align*}
			B_{x,x+e_1}\sim \mathrm{Exp}(\rho) \quad B_{x,x+e_2}\sim \mathrm{Exp}(1-\rho)
		\end{align*}
		\item For any down-right path $\mathcal{Y}=(y_k)_{k\in\Z}$ in $\Z^2$, the random variables 
	\begin{align}
		\{\hat{\omega}_z\}_{z\in \mathcal{G}_-}, \quad \{t(y_{k-1},y_k):k\in \Z\} \quad \text{and} \quad \{\omega_z\}_{z\in\mathcal{G}_+}
	\end{align}
are mutually independent, where
\begin{align*}
	t(e)=
	\begin{cases}
		B^\rho_{x-e_1,x} & e=(x-e_1,x)\\
		B^\rho_{x-e_2,x} & e=(x-e_2,x)
	\end{cases}
\end{align*}
and where 
\begin{align*}
	\mathcal{G}_+=\cY+\Z^2_{>0}, \quad \text{and}	\quad \mathcal{G}_-=\cY+\Z^2_{<0}.
\end{align*}
		\item the following identities hold 
		\begin{align*}
			\hat{\omega}_{x-e_1-e_2}&=B^\rho_{x-e_1-e_2,x-e_2}\wedge B^\rho_{x-e_1-e_2,x-e_1}\\
			B^\rho_{x-e_1,x}&=\omega_x+(B^\rho_{x-e_1-e_2,x-e_2}-B^\rho_{x-e_1-e_2,x-e_1})^-\\
			B^\rho_{x-e_2,x}&=\omega_x+(B^\rho_{x-e_1-e_2,x-e_1}-B^\rho_{x-e_1-e_2,x-e_2})^- 
		\end{align*}
		\item \label{rc}There exists a single event $\Omega_2$ of full probability such that for every $\omega\in\Omega$, all $x\in\Z^2$ and all $\rho_1<\rho_2$
		\begin{align}\label{mon}
			B^{\rho_2}_{x,x+e_1}(\omega)\leq B^{\rho_1}_{x,x+e_1}(\omega), \quad B^{\rho_1}_{x,x+e_2}(\omega)\leq B^{\rho_2}_{x,x+e_2}(\omega),
		\end{align}  
	and for each $x,y\in\Z^2$, the function $\rho\rightarrow B^{\rho}_{x,y}(\omega)$ is right-continuous with left limits.
	\item For each fixed $0<\rho<1$ there exists $\Omega_2^{(\rho)}$ of full probability such that for every $\omega\in \Omega_2^{(\rho)}$ and any sequence $v_n\in \Z^2$ such that $|v_n|_1\rightarrow\infty$ and
	\begin{align*}
		\lim_{n\rightarrow\infty}\frac{v_n}{|v_n|_1}=\Bigg(\frac{\rho^2}{(1-\rho)^2+\rho^2},\frac{(1-\rho)^2}{(1-\rho)^2+\rho^2}\Bigg)
	\end{align*}
	we have the following limit
	\begin{align*}
		B^{\rho}_{x,y}(\omega)=\lim_{n\rightarrow\infty}[G_{x,v_n}(\omega)-G_{y,v_n}(\omega)].
	\end{align*}
	\end{enumerate}
\end{theorem}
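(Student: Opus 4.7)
Since the theorem is cited verbatim from \cite[Theorem 4.2]{S18}, no new proof is required in the present paper beyond the citation. Nonetheless, the following is the plan I would follow to prove such a structural result from first principles, using only the exponential LPP environment and the Burke/queueing toolkit already recalled in Section \ref{sec:SoQ}.

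The construction is the natural one. For each fixed $\rho$, define $B^\rho_{x,y}(\omega) := \lim_n [G_{x,v_n}(\omega) - G_{y,v_n}(\omega)]$ along any sequence $v_n$ tending to infinity in the characteristic direction associated to $\rho$. Almost-sure existence of the limit follows from the curvature of the limit shape together with coalescence of semi-infinite geodesics in direction $\rho$: the two geodesics from $x$ and $y$ to $v_n$ merge at a finite random coalescence point, after which the difference $G_{x,v_n}-G_{y,v_n}$ is constant in $n$. The cocycle identity (1) is then a tautology, and item (6) is the definition itself.

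The marginals in (2) and the Burke-type independence in (3) are best obtained via the alternative realization of the same object as the stationary-increment LPP with $\mathrm{Exp}(\rho)$ boundary weights on the horizontal axis, $\mathrm{Exp}(1-\rho)$ on the vertical axis, and $\mathrm{Exp}(1)$ weights in the bulk. For this model, the horizontal and vertical passage-time increments are themselves i.i.d.\ $\mathrm{Exp}(\rho)$ and $\mathrm{Exp}(1-\rho)$ respectively, which is the classical Burke property; a coupling argument identifies these increments with $B^\rho_{x,x+e_1}$ and $B^\rho_{x,x+e_2}$, delivering (2). Item (3) then says that along any down-right interface the Busemann increments are mutually independent and independent of the bulk weights above it; this is proved by inducting one corner at a time along the interface and exploiting the memoryless property of the Exponential, which is exactly the independence preserved by the queueing map $D$ discussed in Subsection \ref{subsec:prelim}. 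Item (4) is a deterministic consequence of the LPP recursion $G_{z,x}=\omega_x+\max(G_{z,x-e_1},G_{z,x-e_2})$ at $z=x-e_1-e_2$: rewriting it through the cocycle yields the three claimed identities, with $\hat\omega_{x-e_1-e_2}$ emerging as the minimum of the two incoming Busemann increments.

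The main technical obstacle is item (5): simultaneous monotonicity and right-continuity in $\rho$ on a single full-measure event. For any fixed pair $\rho_1<\rho_2$, one can couple the two stationary boundary LPPs and compare semi-infinite geodesics (the $\rho_2$-geodesic lies to the north-west of the $\rho_1$-geodesic), which gives the two monotonicity inequalities in (5) on an event of full measure depending on the pair. To upgrade to a single event $\Omega_2$ valid simultaneously for all $\rho\in(0,1)$, I would first establish the monotonicity on a countable dense set of rationals by intersecting countably many full-measure events, and then \emph{define} $B^\rho$ at every real $\rho$ as the right-limit $\lim_{\rho'\downarrow\rho,\,\rho'\in\mathbb{Q}} B^{\rho'}$; left limits then exist automatically by monotonicity, giving an rcll modification that satisfies (5) by construction. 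The delicate step is verifying that this right-continuous extension agrees almost surely with the direct pointwise limit for every fixed $\rho$, which requires a separate ergodic-theorem argument along the characteristic direction of $\rho$, together with the fact that the countable-dense version is already the Fan--Sepp\"al\"ainen process from Theorem \ref{thm:FS2}.
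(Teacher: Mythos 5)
The theorem is quoted verbatim from \cite[Theorem 4.2]{S18} and the present paper supplies no proof of it; you correctly note that nothing beyond the citation is required here, so there is no paper proof to compare against.

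Your sketch of how the result is established from scratch is broadly the standard route (queueing/Burke for items (2)--(4), geodesic coalescence for item (6), countable-dense monotone extension for item (5)), and it agrees in spirit with the construction carried out in \cite{S18} and in the earlier Georgiou--Rassoul-Agha--Sepp\"al\"ainen papers. One small caveat: in your discussion of item (5) you invoke ``the fact that the countable-dense version is already the Fan--Sepp\"al\"ainen process from Theorem \ref{thm:FS2}.'' Theorem \ref{thm:FS2} is a purely queueing-theoretic characterization of the stationary measure $\mu^{\rhov_n}$; it describes the joint law of the Busemann increments (via Theorem \ref{thm:FS}) but does not itself establish existence or right-continuity of the Busemann process, so leaning on it inside a from-scratch proof of this theorem would be circular. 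The actual delicate step in \cite{S18} is instead to show that the right-continuous extension, defined as a limit over rational densities, coincides a.s.\ with the a.s.\ limit $\lim_n[G_{x,v_n}-G_{y,v_n}]$ for each fixed $\rho$, which is done via a shape-theorem/ergodic argument and monotone squeezing between nearby rational densities, not via the Fan--Sepp\"al\"ainen queue identity. This is a minor misattribution in an otherwise sound outline.
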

 In this work we shall be primarily interested in the distribution of the Busemann process restricted on a horizontal line on the lattice. Hence, for a fixed $0<\rho<1$ define the sequence $I^\rho$ through
\begin{align}\label{Ir}
	I^{\rho}_i=B^{\rho}_{(i-1)e_1,ie_1} \quad i\in \Z.
\end{align}
The following is from \cite{FS18}. It relates the results in Section \ref{sec:SoQ} with the Busemann functions. Recall the measure $\mu^{\rhov^n}$ from Theorem \ref{thm:FS2}.
\begin{theorem}\label{thm:FS}\cite[Theorem 3.2]{FS18}
	Let $\rhov^n=(\rho_1,..,\rho_n)$ be a decreasing vector with elements in $(0,1)$. Then
	\begin{align*}
		(I^{\rho_1},...,I^{\rho_n})\sim \mu^{\rhov^n}.
	\end{align*}
\end{theorem}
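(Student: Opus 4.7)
The plan is to identify $(I^{\rho_1}, \ldots, I^{\rho_n})$ with the measure $\mu^{\rhov^n}$ of Theorem \ref{thm:FS2} by exploiting the queueing structure built into the Busemann identities, and then using the pushforward characterization $\mu^{\rhov^n} = \nu^{\rhov^n} \circ (\md^{(n)})^{-1}$.

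The key observation is that the Busemann identities listed above are exactly queueing recursions in disguise. Writing $I^{\rho}_{(k)}$ for the horizontal Busemann increments at level $\rho$ on line $y=k$ and $J^{\rho}_{(k)}$ for the vertical increments between rows $k-1$ and $k$, the identity
\begin{align*}
	B^{\rho}_{x-e_1, x} = \omega_x + (B^{\rho}_{x-e_1-e_2,\, x-e_2} - B^{\rho}_{x-e_1-e_2,\, x-e_1})^-
\end{align*}
combined with the cocycle property \eqref{cocyc} rearranges, for each $\rho$, into a step of the queueing recursion \eqref{d}--\eqref{d1} expressing $I^{\rho}_{(k+1)}$ in terms of $I^{\rho}_{(k)}$ and $J^{\rho}_{(k+1)}$. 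By the down-right path independence property applied to the path separating rows $k$ and $k+1$, the vertical service sequence is independent of the horizontal state on and below the separator. Together with translation invariance of $\omega$, this forces the joint law of $(I^{\rho_1}_{(k)}, \ldots, I^{\rho_n}_{(k)})$ to be the same for every $k$, and for the base case $n=1$ the known $\mathrm{Exp}(\rho_1)$ single-edge marginal already identifies this law as $\nu^{\rho_1} = \mu^{\rho_1}$.

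For general $n$, I would realize $(I^{\rho_1}, \ldots, I^{\rho_n})$ as a limit of a multi-class LPP initialised from a south-west corner with independent boundary sequences of rates $\rho_1, \ldots, \rho_n$ on a semi-infinite axis, and take the limit as the corner recedes. Each horizontal-line step of the LPP dynamics acts on these multi-class boundary increments as exactly one of the nested queueing operations composing $\md^{(n)}$ in \eqref{md}, so the limiting joint distribution is $\nu^{\rhov^n} \circ (\md^{(n)})^{-1} = \mu^{\rhov^n}$, and the Busemann limit characterization from the preceding theorem matches this with $(I^{\rho_1}, \ldots, I^{\rho_n})$.

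The step I expect to be the main obstacle is the joint distribution of the vertical increments $(J^{\rho_1}_{(k)}, \ldots, J^{\rho_n}_{(k)})$ produced by the Busemann process, which are coupled across rates through the monotonicity \eqref{mon} and the common environment rather than being independent across $\rho$. I would address this by exploiting the sequential nature of the cascade $\md^{(n)}$, which requires only conditional independence between the newly introduced class $\rho_i$ and the already-processed classes $\rho_1, \ldots, \rho_{i-1}$. This conditional independence is supplied by Burke-type output arguments in the spirit of Lemma \ref{stat}, applied along a suitably chosen staircase path in the lattice, so that $\md^{(n)}$ can be run successively without ever needing full joint independence of the verticals across all rates simultaneously.
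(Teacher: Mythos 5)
This theorem is not proven in the present paper at all --- it is cited verbatim as \cite[Theorem~3.2]{FS18}, so there is no "paper's own proof" to compare against. That said, your sketch can be evaluated on its own terms, and it has both a factual error and a structural gap.

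The factual error is the claim that ``each horizontal-line step of the LPP dynamics acts on these multi-class boundary increments as exactly one of the nested queueing operations composing $\md^{(n)}$.'' That is not how $\md^{(n)}$ arises. One row of the LPP dynamics applies the map $\Phi(\h{I}) = (D(I^0,I^1),\ldots,D(I^0,I^n))$ to the \emph{entire} vector simultaneously, with a single fresh service sequence $I^0$; by contrast, $\md^{(n)}$ is a one-shot nested composition $\big(D^{(1)}(I^1),\ldots,D^{(n)}(I^1,\ldots,I^n)\big)$ whose $i$'th coordinate uses $i$ of the input sequences as service streams. These are different transformations, and iterating $\Phi$ from a south-west corner does not produce the composition in \eqref{md}. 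Consequently the stated identification of the receding-corner limit with $\nu^{\rhov^n}\circ(\md^{(n)})^{-1}$ does not follow from what you wrote.

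The structural gap is that you never close the argument with a uniqueness statement. Your first two paragraphs correctly observe that the Busemann identities encode a queueing update, that the down-right path property gives independence of the service sequence from the state below, and that translation invariance forces the joint law of $(I^{\rho_1}_{(k)},\ldots,I^{\rho_n}_{(k)})$ to be $k$-independent --- in other words, you prove \emph{invariance} under the fixed-point equation \eqref{sm}. But invariance alone does not single out $\mu^{\rhov^n}$. The decisive step in \cite{FS18} is precisely the uniqueness part of Theorem~\ref{thm:FS2}: $\mu^{\rhov^n}$ is the \emph{only} measure in $\cY_{\rhov^n}$ satisfying $\Phi\mu^{\rhov^n}=\mu^{\rhov^n}$, and once the Busemann law is shown to be another such fixed point (which also requires verifying ergodicity and the a.s.\ mean conditions placing it in $\cY_{\rhov^n}$), the two must coincide. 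Your paragraph four gestures at Burke-type conditional independence along a staircase but does not supply a substitute for this uniqueness argument, and the ``main obstacle'' you identify --- the coupled law of the vertical increments across rates --- is exactly what the uniqueness route renders irrelevant: one never needs the joint law of $(J^{\rho_1},\ldots,J^{\rho_n})$, only the fact that each marginal queue update preserves the right mean and that the overall fixed point is unique.
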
  
\subsection{Upper bound on the number of jumps of the Busemann process }\label{subsec:sh1}
Recall $\{I^{\rho}\}_{\rho\in(0,1)}$ from \eqref{Ir}. The main result of this subsection is Proposition \ref{prop:ub2} -- on an interval of size $O(N^{2/3})$, the process $\mu\mapsto I^{\mu,N}:=I^{1/2-\frac{\mu}4N^{-1/3}}$ does not see much change i.e. with high probability it will have only a finite number of 'jumps'. It is in this subsection that we use the results of Subsection \ref{subsec:nc} as input.  

  Fix $\rho^*>0$ and let $\en\in\N$. Define the sets
\begin{align*}
\cE(\en)&=[-\rho^*,\rho^*]\cap \{i2^{-\en}\}_{i\in\Z}\\
\Ind(\en)&=2^\en\cE.
\end{align*}
Define the set of intensities
\begin{align*}
\rho_i=1/2-i2^{-M}N^{-1/3} \quad i\in\Ind.
\end{align*}
The vector $\rhov:\Ind\rightarrow \R_+$, defined through $\rhov_i=\rho_i$ is decreasing. Let $\h{I}=\{I^i\}_{i\in\Ind}\in\qs^{\Ind}$ be a vector of independent sequences with exponential distribution with intensities $\rhov$ i.e. $\h{I}\sim \nu^{\rhov}$. We can then define
\begin{align}\label{hf}
\h{f}^{M,N}=(f^1,...f^{|\Ind|})=\md^{(|\Ind|)}(\h{I})=\nc^{(|\Ind|)}(\h{I}),
\end{align}
where the map $\md^{(n)}$ was defined in \eqref{md}. In the following, we omit the superscript and simply write $\h{f}$. In particular, 
\begin{align*}
f^{j}\geq f^i \quad \text{for $j,i\in\Ind$ such that $j\geq i$}.
\end{align*}
In what follows we shall make use of the following set
\begin{align*}
\cE^{-i}(M)=\{j\in \Ind: j\leq \sup\{\Ind\}-i \} \quad \text{for $1\leq i \leq |\Ind|-1$}.
\end{align*}
In words, the set $\cE^{-i}(M)$ takes all indices in $\Ind$ except the $i$ largest ones. Let $x_0>0$ and define $\xin^N=\xin N^{2/3}$. Define the following events
\begin{align}\label{Bset}
B^{i,N}=\{f^i_j\neq f^{i+1}_j \text{ for some $j\in[-\xin^N,\xin^N]\cap \Z$}\} \quad \forall i\in	\cE^{-1},N\in \N. 
\end{align}
Define
\begin{align*}
F_{B^i}:=1_{B^{i,N}}+1_{B^{i+1,N}}+1_{B^{i+2,N}} \quad \text{for all $i\in\cE^{-3}$}.
\end{align*}
In words, the function $F_{B^i}$ counts the number of consecutive changes in the four elements of the vector $\h{f}$, sitting between index $i$ to $i+3$.   The following two maps will be useful. For any $k,l\in\Z$ such that $k\leq l$ we define
\begin{align}\label{summ}
S^{k,l}(I)=\sum_{i=k}^{l}I_i.
\end{align}
For $m\in\Z$, define the map $\psi^{m}:\R_+ \times \qs\times \qs \rightarrow \qs^{[m,\infty)}$ through
\begin{align}\label{psi}
\psi^m(J,I^1,I^2)_i=\inf_{m \leq j\leq i}(J+S^{m,j-1}(I^1-I^2)-I^2_j)^-.
\end{align}
The following result shows that it is not typical to see three consecutive changes in the vector $\h{f}$ when restricted to $[-\xin^N,\xin^N]$.
\begin{lemma}\label{lem:Fb}
	Let $\rho^*\geq 1$,$x_0\geq 1$, $(32)^3\vee 64(\rho^*)^3<N$ and $\frac{1}{2}\log_2\big(x_0\big)+4<M$. Then for any $i\in \Ind^{-3}$ 
	\begin{align}\label{4b}
	\P\big(F_{B^i}\geq2\big)\leq 2^{20}x_02^{-2M}.
	\end{align}
\end{lemma}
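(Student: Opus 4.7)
The plan is to rephrase each event $B^{i,N}$ as the workload of a single queue being small at a boundary point, and then exploit the independence of sojourn random variables given by Proposition \ref{prop:J}. Since $f^{i+1}=D(v^{i+1},u^{i+1})$ with $v^{i+1}=f^i$ by \eqref{vf}--\eqref{ftf}, $f^{i+1}_j=f^i_j$ precisely when the queue with service $v^{i+1}$ and arrival $u^{i+1}$ incurs no idle time at step $j$. Unrolling the workload recursion with initial condition $t_{-\xin^N}=J^{i+1}_{-\xin^N}$ yields
\[
    B^{i,N}=\bigl\{J^{i+1}_{-\xin^N}\leq \Lambda^{(i+1)}\bigr\},\qquad
    \Lambda^{(i+1)}:=\max_{-\xin^N<j\leq \xin^N}\bigl(S^{-\xin^N+1,j}(u^{i+1})-S^{-\xin^N+1,j-1}(v^{i+1})\bigr),
\]
i.e.\ the event $\psi^{-\xin^N}(J^{i+1}_{-\xin^N},v^{i+1},u^{i+1})_{\xin^N}\neq 0$ in the notation of \eqref{psi}. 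By Lemma \ref{lem:int}, $v^{i+1}$ and $u^{i+1}$ are i.i.d.\ sequences, so $\Lambda^{(i+1)}$ (a function of them on $(-\xin^N,\xin^N]$) is independent of $J^{i+1}_{-\xin^N}$ (a function of them on $(-\infty,-\xin^N]$).

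For the single-event bound, Proposition \ref{prop:J} gives $J^{i+1}_{-\xin^N}\sim\mathrm{Exp}(\rho_i-\rho_{i+1})=\mathrm{Exp}(2^{-M}N^{-1/3})$, whose density is bounded by $2^{-M}N^{-1/3}$; conditioning on $\Lambda^{(i+1)}$ yields $\P(B^{i,N})\leq 2^{-M}N^{-1/3}\,\E[\Lambda^{(i+1)}]$. Lemma \ref{lem:int} also shows $u^{i+1}-v^{i+1}$ is i.i.d.\ with mean $1/\rho_{i+1}-1/\rho_i=O(2^{-M}N^{-1/3})$ and variance $O(1)$, so a maximal inequality for the associated random walk on an interval of length $2\xin N^{2/3}$ gives $\E[\Lambda^{(i+1)}]\leq C(\xin\cdot 2^{-M}N^{1/3}+\sqrt{\xin}\,N^{1/3})$. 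The hypothesis $M>\tfrac12\log_2\xin+4$, equivalently $2^M\geq 16\sqrt{\xin}$, makes the fluctuation term dominate, giving $\E[\Lambda^{(i+1)}]\leq C\sqrt{\xin}\,N^{1/3}$ and hence $\P(B^{i,N})\leq C\sqrt{\xin}\,2^{-M}$.

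For $F_{B^i}\geq 2$, a union bound reduces the task to bounding $\P(B^{i',N}\cap B^{i'',N})$ for each pair $i'<i''$ in $\{i,i+1,i+2\}$. Fixing $T=C_0\sqrt{\xin}\,N^{1/3}$, I would write
\[
    \P(B^{i',N}\cap B^{i'',N})\leq \P\bigl(J^{i'+1}_{-\xin^N}\leq T,\,J^{i''+1}_{-\xin^N}\leq T\bigr)+\P\bigl(\Lambda^{(i'+1)}>T\bigr)+\P\bigl(\Lambda^{(i''+1)}>T\bigr),
\]
where the first term uses Corollary \ref{cor:in} (independence of $J^{i'+1}_{-\xin^N}$ and $J^{i''+1}_{-\xin^N}$) to give $(T\cdot 2^{-M}N^{-1/3})^2=C_0^2\,\xin\,2^{-2M}$, and each tail term $\P(\Lambda^{(l+1)}>T)$ is $O(\xin\,2^{-2M})$ from a Chernoff estimate for the walk maximum on $[-\xin^N,\xin^N]$ once $C_0$ is chosen above an absolute constant. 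Multiplying by the three pairs gives $\P(F_{B^i}\geq 2)\leq C'\,\xin\,2^{-2M}\leq 2^{20}\,\xin\,2^{-2M}$.

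The main obstacle is the decomposition just above, which implicitly requires cross-layer independence of $J^{i'+1}_{-\xin^N}$ from $\Lambda^{(i''+1)}$ and vice versa; this does not follow from the simple past/future split used for the single-event bound, because the different layers are coupled through the sequential $\sigma$-operations that define $\nc^{(n)}$. The remedy is the stationary-vs.-packed-melon comparison laid out in Subsection \ref{subsec:mel}: since $\sigma_i$ is adapted in time, the full vector $\{J^{k+1}_{-\xin^N}\}_{k}$ is measurable with respect to $\h{I}|_{(-\infty,-\xin^N]}$ and consists of mutually independent exponentials by Corollary \ref{cor:in}, whereas each $\Lambda^{(l+1)}$ is stochastically dominated, uniformly in $l$, by the running maximum of the top line of a packed melon built from $\h{I}|_{(-\xin^N,\xin^N]}$. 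This packed-melon bound is independent of the past and enjoys sharp maximal inequalities from the non-intersecting-walk literature, supplying the tail control on $\Lambda^{(l+1)}$ required to justify the decomposition.
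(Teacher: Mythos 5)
Your reduction to $B^{i,N}=\{J^{i+1}_{-\xin^N}\le\Lambda^{(i+1)}\}$ and the single-event bound are sound, but the fixed-threshold decomposition for the pair event breaks down at the tail terms, and this is the genuine gap. With $T=C_0\sqrt{\xin}\,N^{1/3}$ and $C_0$ an absolute constant, $\Lambda^{(l+1)}$ is the running maximum of a walk whose diffusive scale over the window $[-\xin^N,\xin^N]$ is exactly $\Theta(\sqrt{\xin}\,N^{1/3})$ (the drift contribution is $\lesssim \xin 2^{-M}N^{1/3}\lesssim\sqrt{\xin}\,N^{1/3}$ by the hypothesis $2^M\ge 16\sqrt{\xin}$), so the Chernoff/sub-Gaussian tail gives $\P(\Lambda^{(l+1)}>T)\lesssim e^{-c C_0^2}$ — a constant in $M$, not $O(\xin 2^{-2M})$. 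Forcing the tail to be $\xin 2^{-2M}$ requires $C_0\gtrsim\sqrt{M}$, which then inflates the first term to $\bigl(\Delta T\bigr)^2\gtrsim M\,\xin\,2^{-2M}$; either way you cannot reach the claimed $2^{20}\xin\,2^{-2M}$ uniformly in $M$. Also, the decomposition you actually wrote does not require cross-layer independence of $J^{i'+1}$ from $\Lambda^{(i''+1)}$: the first summand needs only the joint law of $(J^{i'+1},J^{i''+1})$ (Corollary \ref{cor:in}), and the other two are marginals. Your diagnosis of "the main obstacle" is therefore aimed at the wrong spot.

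The fix — and this is what the paper's proof does — is to not truncate at a threshold at all. Build a \emph{single} dominating random variable $X_{\max}\ge\Lambda^{(l+1)}$ for all $l$ that is measurable with respect to the raw inputs $\h{I}|_{[-\xin^N,\xin^N]}$ (this is where your packed-melon observation is exactly right: $D_{m,0}$ removes the stationary queue's memory of the past, via Lemmas \ref{lem:qz2} and \ref{lem:bs}), while the $J^{l}$ are measurable with respect to $\h{I}|_{(-\infty,-\xin^N]}$, so $(J^{i'+1},J^{i''+1})$ is jointly independent of $X_{\max}$. Then
\[
\P\bigl(B^{i',N}\cap B^{i'',N}\bigr)\le\P\bigl(\max\{J^{i'+1},J^{i''+1}\}\le X_{\max}\bigr)
=\E\bigl[(1-e^{-\Delta X_{\max}})^2\bigr]\le\Delta^2\,\E[X_{\max}^2],
\]
equivalently the integral $2\int_0^\infty 2^{-M}e^{-2^{-M}t}(1-e^{-2^{-M}t})\P(X_{\max}>tN^{1/3})\,dt$, and a sub-Gaussian tail for $X_{\max}$ with scale $\sqrt{\xin}\,N^{1/3}$ gives $\E[X_{\max}^2]=O(\xin N^{2/3})$, hence $\Delta^2\E[X_{\max}^2]=O(\xin 2^{-2M})$ with no extra $M$ factor. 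So the missing step is not an independence lemma but the replacement of the hard truncation by integrating out the barrier height, together with the uniform dominating bound $X_{\max}$.
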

\begin{proof}
	Fix  $i\in \Ind^{-3}$. Let $\h{I}=(I^1,I^2,I^3,I^4)\sim \nu_{\rhov_4}$ where $\rhov_4=(\rho_i,\rho_{i+1},\rho_{i+2},\rho_{i+3})$. Let $(\hat{f}^1,\hat{f}^2,\hat{f}^3,\hat{f}^4)=\md^{(4)}(\h{I})=\nc^{(4)}(\h{I})$. Define 
	\begin{align*}
	\hat{B}^{N,l}=\{\hat{f}^l_j\neq \hat{f}^{l+1}_j \text{ for some $j\in[-\xin^N,\xin^N]\cap \Z$}\} \quad  l\in\{1,2,3\}.
	\end{align*}
	From Theorem \ref{thm:FS} $(\hat{B}^{N,1},\hat{B}^{N,2},\hat{B}^{N,3})$ has the same distribution as $(B^{N,i},B^{N,i+1},B^{N,i+2})$ from \eqref{Bset}, so it is enough to show that
	\begin{align}\label{ub5}
	\P\big(F_{\hat{B}}\geq 2\big)\leq 2^{20}x_02^{-2M},
	\end{align}
	where 
	\begin{align*}
	F_{\hat{B}}:=1_{\hat{B}^{N,1}}+1_{\hat{B}^{N,2}}+1_{\hat{B}^{N,3}}.
	\end{align*}
	From \eqref{vf}--\eqref{ftf}, \eqref{d1} and \eqref{er}, 
	\begin{align}\label{feq}
	\hat{f}^l=D(v^l,u^l)=D(\hat{f}^{l-1},u^l)=\hat{f}^{l-1}+Er(\hat{f}^{l-1},u^l) \quad l\in\{2,3,4\}.
	\end{align}
	From \eqref{feq} and the definition of $\hat{B}^{N,l}$, it follows that 
	\begin{align}\label{eq1}
	\hat{B}^{N,l}=\{S^{-\xin^N,\xin^N}(Er(\hat{f}^{l},u^{l+1}))>0\}\stackrel{\eqref{se}}{=}\{\psi^{-\xin^N}(J^l,\hat{f}^{l},u^{l+1})_{\xin^N}>0\}\quad \forall l\in\{1,2,3\},
	\end{align}
	where 
	\begin{align*}
	J^l=S_{oj}(\hat{f}^{l},u^{l+1})_{x_0^N-1}.
	\end{align*}
	{\bf{What's ahead?}}\label{disc}\\
		Let us take a pause here to explain the main idea of the rest of the proof, which otherwise  might be lost on the reader for the trees (or displays). A quick inspection of the map $\psi$ from \eqref{psi} and equation \eqref{eq1}, shows that the probability of the event $\hat{B}^{N,l}$ can be bounded from above by the probability that the random walk
	\begin{align*}
		\mathbb{S}^l:=S^{-x_0,\cdot}(u^{l+1},\hat{f}^l)
	\end{align*}
	  does not go above the  value $J^l>0$ - in other words, in order for the event $\hat{B}^{N,l}$ to happen, the random walk $\mathbb{S}^l$ has to go over a potential wall of size $J^l$.  The statistics of $\{J^l\}_{l=1}^3$ are known to us from  Proposition \ref{prop:J} - these are i.i.d.\ exponential random variables with intensity $2^{-M}N^{-1/3}$. This property is crucial, indeed,  in the diffusive scaling the picture translates to three Brownian motions of drifts $\mu_l=\mu_0+l2^{-M+2}$ for some $\mu_0$ depending on the index $i$ from \eqref{4b}, each trying to cross a random barrier of order $2^M$. When $M$ is large, from the independence, we might expect that the probability of two events in $\{\hat{B}^{N,l}\}_{l=1}^3$ to occur will be of order ${2^{-2M}}$. This argument could be valid if it were not for the fact that  the random walks $\{\mathbb{S}^l\}_{l=1}^3$ are not independent of the barriers $\{J^l\}_{l=1}^3$, nor are the random walks themselves independent. 
	
	Our strategy to work around this difficulty is two fold. First, we decouple the random walks from the barriers by showing that they can be controlled by random walks coming from empty queues starting at  $-x_0$,  i.e. random walks coming from packed-melonization (see the discussion of packed melonization vs stationary one in Section \ref{subsec:W}), this technical bit is proved in Section \ref{subsec:CSE}. Let us denote by $\tilde{f}\in \qs^4$ this packed-melon and let
	\begin{align}\label{RW}
		\mathbb{S}^{l,\text{packed}}:=S^{-x_0,\cdot}(\tilde{u}^{l+1},\tilde{f}^l)
	\end{align}
	   be the new random walks. The next step is to find one random walk that bounds all $\{\mathbb{S}^{l,\text{packed}}\}_{l=1}^3$ uniformly in $l$. Indeed, this will relieve us of trying to figure out the statistics of $\{\mathbb{S}^{l,\text{packed}}\}_{l=1}^3$ which seem to be quite complicated. The first thing to note is that as $\tilde{f}$ is increasing, replacing $\tilde{f}^l$ in \eqref{RW} with $\tilde{f}^1$ will result in a random walk that dominates $\mathbb{S}^l$ for any $l\in\{1,2,3\}$. Similarly, the top line of the melon, $\hat{f}^4$, dominate all other lines that appear in the process of the packed-melonization of $\tilde{f}$, in particular $\{\tilde{u}^{l+1}\}_{l=1}^3$. To conclude,  we can dominate the random walk in \eqref{RW} by $\mathbb{S}^{\text{top}}:=S^{-x_0,\cdot}(\tilde{f}^4,\tilde{f}^1)$. As $\tilde{f}^1$ and $\tilde{f}^4$ are not independent, their joint statistics is not trivial, and we use a sub-optimal bound on  $\mathbb{S}^{\text{top}}$ developed in Lemma \ref{lem:bs}. 
	   
	   From the proof, it seems that it should be possible to prove that the probability of seeing $k$ consecutive changes in the vector $\h{f}$ from \eqref{hf} should decay like $2^{-Mk}$. We believe that these ideas could help to improve the bound in \eqref{ub82},  but may not be enough to obtain the optimal bound which we expect to be  exponential in $M$. Indeed, the fact that the size of $\mathbb{S}^{\text{top}}$ grows with $M$  i.e.\, $\mathbb{S}^{\text{top}}\approx \tilde{f}^M \approx 2\sqrt{M}$, could be in the way of  obtaining a sharp bound on the left hand side of \eqref{ub82} using our strategy.

	Let us now get back to the proof. Next we control $\{u^l\}_{l\in\{2,3,4\}}$. From the definition of $u^l$ in \eqref{xieta}
	\begin{align*}
	u^2&=I^2\\
	u^3&=D(R(I^1,I^2),I^3)\\
	u^4&=D^{(3)}\Big(R\big(D(I^1,I^2),D(R(I^1,I^2),I^3)\big),R\big(R(I^1,I^2),I^3\big),I^4\Big).
	\end{align*}
	In the next display we use the map $D_{m,0}$ from \eqref{qm} and \eqref{ode3}. For any $m\in\Z$
	\begin{align}\label{uub}
	\hat{u}^2:=\Pi_{[m,\infty)}(u^2)&=\Pi_{[m,\infty)}(I^2)\\\nonumber
	\hat{u}^3:=\Pi_{[m,\infty)}(u^3)&\leq D_{m,0}(R(I^1,I^2),I^3)\\\nonumber
	\hat{u}^4:=\Pi_{[m,\infty)}(u^4)&\leq D^{(3)}_{m,0}\Big(R\big(D(I^1,I^2),D(R(I^1,I^2),I^3)\big),R\big(R(I^1,I^2),I^3\big),I^4\Big).\nonumber
	\end{align}
	In what follows, we shall use the function $\Phi_{m,n}$ defined in \eqref{mx}. As $\hat{f}^1\leq \hat{f}^l$ for all $l\in\{2,3,4\}$,  
	\begin{align}\label{ch}
	\hat{C}^{N,l}&:=\{J^{l-1}<\Phi_{-\xin^N,\xin^N}(\hat{u}^l-\hat{f}^1)\} \qquad \forall l\in\{2,3,4\}\\
	&\supseteq\{\psi^{-\xin^N}(J^{l-1},\hat{f}^{l-1},\hat{u}^l)_{\xin^N}>0\}= \hat{B}^{N,l}\nonumber.
	\end{align}
	By definition $\hat{f}^1=I^1$  so
	\begin{align*}
	\Phi_{-\xin^N,\xin^N}(\hat{u}^l-\hat{f}^1)&=\Phi_{-\xin^N,\xin^N}(\hat{u}^l-I^1).
	\end{align*}
	In the next display we use the map $D_{m,0}$ from \eqref{qm}, Lemma \ref{lem:qz2} and that by definition $R(I^1,I^2)\leq I^1,I^2$ for $I^1,I^2\in\qs$. From \eqref{uub}, for $m \leq j$ 
	\begin{align}\label{ub71}
	S^{m,j}(\hat{u}^3) &\stackrel{\eqref{ode4}}{\leq} S^{m,j}\big(D_{m,0}(I^2,I^3)\big)\\
	S^{m,j}(\hat{u}^4)&\stackrel{\eqref{ode4}}{\leq}\label{ub72} 	S^{m,j}\Bigg(D^{(3)}_{m,0}\Big(D(R(I^1,I^2),I^3),I^3,I^4\Big)\Bigg)\\\nonumber
	&\stackrel{\eqref{ode3}}{\leq} 	S^{m,j}\Bigg(D^{(3)}_{m,0}\Big(D_{m,0}(R(I^1,I^2),I^3),I^3,I^4\Big)\Bigg)\\\nonumber
	&\stackrel{\eqref{ode4}}{\leq}	S^{m,j}\Bigg(D^{(3)}_{m,0}\Big(D_{m,0}(I^2,I^3),I^3,I^4\Big)\Bigg).
	\end{align}
	From \eqref{ub71}-\eqref{ub72}  with $m=-x_0^N$ and $i=x_0^N$
	\begin{align}\label{ub73}
	\Phi_{-\xin^N,\xin^N}(\hat{u}^l-I^1)\leq 
	\begin{cases}
	\Phi_{-\xin^N,\xin^N}(I^2-I^1) & l=2\\
	\Phi_{-\xin^N,\xin^N}\big(D_{m,0}(I^2,I^3)-I^1\big) & l=3\\
	\Phi_{-\xin^N,\xin^N}\Big(D^{(3)}_{m,0}\Big(D_{m,0}(I^2,I^3),I^3,I^4\Big)-I^1\Big) & l=4.
	\end{cases}
	\end{align}
	Using \eqref{ub73} in Lemma \ref{lem:bs}  with $m=-x_0^N$ and $i=x_0^N$
	\begin{align*}
	\Phi_{-\xin^N,\xin^N}(\hat{u}^l-I^1)\leq 
	\begin{cases}
	\Phi_{-\xin^N,\xin^N}(I^2-I^1) & l=2\\
	\Phi_{-\xin^N,\xin^N}(I^2-I^1)+\Phi_{-\xin^N,\xin^N}(I^3-I^2)+\Mx_{-\xin^N,\xin^N}(I^3) & l=3\\
	\Phi_{-\xin^N,\xin^N}(I^2-I^1)+2\Phi_{-\xin^N,\xin^N}(I^3-I^2)+ \Phi_{-\xin^N,\xin^N}(I^4-I^3) & l=4\\+\Mx_{-\xin^N,\xin^N}(I^3)+\Mx_{-\xin^N,\xin^N}(I^4)+\Mx_{-\xin^N,\xin^N}(I^3+I^4),
	\end{cases}
	\end{align*}
	where the computation for case $l=4$ is as follows
	\begin{align*}
	&\Phi_{-\xin^N,\xin^N}(\hat{u}^4-I^1)\\
	&\leq\Phi_{-\xin^N,\xin^N}(D_{-x_0^N,0}(I^2,I^3)-I^1)+\Phi_{-\xin^N,\xin^N}(I^3-D_{-x_0^N,0}(I^2,I^3))+\Phi_{-\xin^N,\xin^N}(I^4-I^3) \\&+\Mx_{-\xin^N,\xin^N}(I^4)+\Mx_{-\xin^N,\xin^N}(I^3+I^4)\\
	&\stackrel{\eqref{bs}}{\leq} \Phi_{-\xin^N,\xin^N}(I^2-I^1)+\Phi_{-\xin^N,\xin^N}(I^3-I^2)+\Mx_{-\xin^N,\xin^N}(I^3)+ \Phi_{-\xin^N,\xin^N}(I^3-I^2)\\
	&+\Phi_{-\xin^N,\xin^N}(I^4-I^3)+\Mx_{-\xin^N,\xin^N}(I^4) +\Mx_{-\xin^N,\xin^N}(I^3+I^4)\\
	&=\Phi_{-\xin^N,\xin^N}(I^2-I^1)+2\Phi_{-\xin^N,\xin^N}(I^3-I^2)+ \Phi_{-\xin^N,\xin^N}(I^4-I^3) \\
	&+\Mx_{-\xin^N,\xin^N}(I^3)+\Mx_{-\xin^N,\xin^N}(I^4)+\Mx_{-\xin^N,\xin^N}(I^3+I^4),
	\end{align*}
	where we used Lemma \ref{lem:bs} twice and the fact that $D(I^2,I^3)\geq I^2$. We see that for all $l\in \{2,3,4\}$
	\begin{align*}
	&\Phi_{-\xin^N,\xin^N}(\hat{u}^l-I^1)\leq \Phi_{-\xin^N,\xin^N}(I^2-I^1)+2\Phi_{-\xin^N,\xin^N}(I^3-I^2)+ \Phi_{-\xin^N,\xin^N}(I^4-I^3) \\
	&+\Mx_{-\xin^N,\xin^N}(I^3)+\Mx_{-\xin^N,\xin^N}(I^4)+\Mx_{-\xin^N,\xin^N}(I^3+I^4):=X_{max}.
	\end{align*}
	We have
	\begin{align}\label{xm}
	&\P\big(X_{max}>t\big)\\\nonumber
	&\leq \P(\Phi_{-\xin^N,\xin^N}(I^2-I^1)+ 2\Phi_{-\xin^N,\xin^N}(I^3-I^2)+\Phi_{-\xin^N,\xin^N}(I^4-I^3)>t/2)\\\nonumber
	&+\P(\Mx_{-\xin^N,\xin^N}(I^3)+\Mx_{-\xin^N,\xin^N}(I^4)+\Mx_{-\xin^N,\xin^N}(I^3+I^4)>t/2)\\\nonumber
	&\leq 4\P(\Phi_{-\xin^N,\xin^N}(I^4-I^1)>t/8)\\\nonumber
	&+\P(2\Mx_{-\xin^N,\xin^N}(I^3)+2\Mx_{-\xin^N,\xin^N}(I^4)>t/2)\\\nonumber
	& \leq 4\P(\Phi_{-\xin^N,\xin^N}(I^4-I^1)>t/8)\\\nonumber
	&+4\P(\Mx_{-\xin^N,\xin^N}(I^4)>t/8),
	\end{align}
	where we used the fact that $\Phi_{-\xin^N,\xin^N}(I^4-I^1)$ ($I^4$) stochastically dominates all the terms in the second line(fifth line) of \eqref{xm}. As $I^4-I^1 \sim \mathrm{Exp}(\rho_i+3\cdot2^{-M}N^{-1/3},\rho_i)$ for some $\rho_i\in \cE$, by Lemma \ref{lem:rwub},
	\begin{align}\label{ub9}
	&\P(\Phi_{-\xin^N,\xin^N}(I^4-I^1)>t/8)\\
	&\leq \Bigg(1+\frac{(2x_0^N+1)^{-1}}{(\rho_i-\sqrt{(3\cdot2^{-M}N^{-1/3})^2+4(2x_0^N+1)^{-1}})\rho_i}\Bigg)^{2x_0^N+1}e^{-\frac{\sqrt{(3\cdot2^{-M}N^{-1/3})^2+4(2x_0^N+1)^{-1}}-3\cdot2^{-M}N^{-1/3}}{2}{t/8}}\nonumber\\
	&\leq \Bigg(1+\frac{(2x_0^N+1)^{-1}}{[1/2-\rho^* N^{-1/3} -\sqrt{(3\cdot2^{-M}N^{-1/3})^2+4(2x_0^N+1)^{-1}}](1/2-\rho^* N^{-1/3})}\Bigg)^{2x_0^N+1}\nonumber\\
	&\times e^{-\frac{\sqrt{(3\cdot2^{-M}N^{-1/3})^2+4(2x_0^N+1)^{-1}}-3\cdot2^{-M}N^{-1/3}}{2}{t/8}}\nonumber
	\end{align}
	Setting $\rho^*\geq 1$, $x_0\geq 1$, $(32)^3\vee 64(\rho^*)^3<N$ we have the following bounds
	\begin{align}\label{ub8}
	1/2-\rho^* N^{-1/3}>1/4, \quad \sqrt{(3\cdot2^{-M}N^{-1/3})^2+4(2x_0^N+1)^{-1}}<1/8\quad \text{and} \quad 3\cdot2^{-M}N^{-1/3}<\frac1{16}.
	\end{align}
	Setting $\frac{1}{2}\log_2\big(x_0\big)+4<M$ gives
	\begin{align}\label{ub7}
	\frac{\sqrt{(3\cdot2^{-M}N^{-1/3})^2+4(2x_0^N+1)^{-1}}-3\cdot2^{-M}N^{-1/3}}{2}\geq \frac{1}{4}(2x_0+1)^{-1/2}N^{-1/3}
	\end{align}
	Using \eqref{ub8} and \eqref{ub7} in \eqref{ub9} leads to 
	\begin{align}\label{ub}
	&\P(\Phi_{-\xin^N,\xin^N}(I^4-I^1)>t/8)\\
	&\leq\big(1+(32)^{-1}(2x_0^N+1)^{-1}\big)^{2x_0^N+1}e^{-\frac{1}{32}(2x_0+1)^{-1/2}{N^{-1/3}t}}\leq e^{-\frac{1}{32}(2x_0+1)^{-1/2}{N^{-1/3}t}+1},\nonumber
	\end{align}
	where we used $(1+\frac{x}{n})^n\leq e^x$. Using union bound and \eqref{ub8}
	\begin{align}\label{ub3}
	\P(\Mx_{-\xin^N,\xin^N}(I^4)>t/8)\leq (2x_0^N+1)e^{-(1/2-\rho^*N^{-1/3})t/8}\leq (2x_0^N+1)e^{-t/32}.
	\end{align}
	Plugging \eqref{ub} and \eqref{ub3} into \eqref{xm},
	\begin{align}\label{Xm}
	\P\big(X_{max}>t\big)\leq 4e^{-\frac{1}{32}(2x_0+1)^{-1/2}{N^{-1/3}t}+1}+4(2x_0^N+1)e^{-t/32}.
	\end{align}
	Recall the set $\hat{C}^{N,l}$ from \eqref{ch}. Denote
	\begin{align*}
	F_{\hat{C}}:=1_{\hat{C}^{l,N}}+1_{\hat{C}^{l+1,N}}+1_{\hat{C}^{l+2,N}}.
	\end{align*}
	It follows that
	\begin{align}\label{ub11}
	&\P(F_{\hat{C}^i}\geq 2)\leq \\\nonumber
	&\P(\hat{C}^{N,i+2}\cap \hat{C}^{N,i+3})+\P(\hat{C}^{N,i+2}\cap \hat{C}^{N,i+4})+\P(\hat{C}^{N,i+3}\cap \hat{C}^{N,i+4})\\\nonumber
	&\leq 4\P\big(\max\{J^2,J^3\}\leq  X_{max}\big).
	\end{align}
	By Proposition \ref{prop:J}, $\{J^l\}_{l\in\{2,3,4\}}$ are i.i.d. r.v.s and
	\begin{align*}
	J^l \sim \textrm{Exp}(\rho_{i-1}-\rho_i)=\textrm{Exp}(2^{-M}N^{-1/3}) \quad \text{for $l\in\{1,2,3\}$}.
	\end{align*}
	Using the joint independence of $J^2,J^3$ and $X_{max}$
	\begin{align}\label{ub6}
	\P\big(\max\{J^2,J^3\}\leq X_{max}\big)=2\int_0^\infty 2^{-M}e^{-2^{-M}t}(1-e^{-2^{-M}t})\P(X_{max}>tN^{1/3})dt.
	\end{align}
	Plugging \eqref{Xm} in \eqref{ub6}
	\begin{align}\label{ub10}
	\P\big(\max\{J^2,J^3\}\leq X_{max}\big)\leq 2^{18}x_02^{-2M},
	\end{align}
	where the full computation can be found in \eqref{poub10}.
	Plugging \eqref{ub10} in \eqref{ub11} gives
	\begin{align}\label{ub4}
	\P(F_{\hat{C}^i}\geq 2)\leq 2^{20}x_02^{-2M}
	\end{align}
	From \eqref{ch} $\P(F_{\hat{B}^i}\geq 2)\leq \P(F_{\hat{C}^i}\geq 2)$, so that \eqref{ub4} implies \eqref{ub5}. The proof is complete.
\end{proof}
\begin{corollary}\label{cor:Fb}
	Let $\rho^*\geq 1$, $x_0\geq 1$, $(32)^3\vee 64(\rho^*)^3<N$ and $\frac{1}{2}\log_2\big(x_0\big)+4<M$. 
	\begin{align}\label{3b}
	\P\big(F_{B^i}\geq2 \text{ for some $i\in\cE^{-3}$}\big)\leq 2^{20}x_02^{-M}.
	\end{align}
\end{corollary}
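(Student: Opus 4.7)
The plan is a direct union bound: Lemma \ref{lem:Fb} already provides the per-index estimate $\P(F_{B^i} \geq 2) \leq 2^{20} x_0 2^{-2M}$, and the corollary follows by summing over $i \in \cE^{-3}(M)$. All hypotheses of the corollary match those of the lemma verbatim, so the lemma applies uniformly to every admissible $i$.

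First I would estimate the cardinality of the index set. By the definition $\cE(M) = [-\rho^*,\rho^*] \cap \{i2^{-M}\}_{i \in \Z}$, so the spacing is $2^{-M}$ and
\begin{align*}
|\cE(M)| \leq 2\rho^* 2^{M} + 1.
\end{align*}
Removing the three largest elements only decreases the count, hence $|\cE^{-3}(M)| \leq 2\rho^* 2^{M} + 1$. Under the standing assumptions $\rho^* \geq 1$ and $M$ large, this is $\leq C \rho^* 2^{M}$ for a small universal constant $C$.

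Second I would apply the union bound together with Lemma \ref{lem:Fb}:
\begin{align*}
\P\bigl(F_{B^i} \geq 2 \text{ for some } i \in \cE^{-3}\bigr)
\leq \sum_{i \in \cE^{-3}(M)} \P(F_{B^i} \geq 2)
\leq |\cE^{-3}(M)| \cdot 2^{20} x_0 2^{-2M}
\leq (2\rho^* 2^{M} + 1) \cdot 2^{20} x_0 2^{-2M}.
\end{align*}
After simplifying, the right-hand side is bounded above by a constant multiple of $\rho^* x_0 2^{-M}$, which after absorbing the $\rho^*$ factor into the numerical prefactor (or treating $\rho^*$ as a fixed constant set at the outset, as it is in the applications below) yields the claimed bound $2^{20} x_0 2^{-M}$.

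There is no genuine obstacle: the content is entirely in Lemma \ref{lem:Fb}, and the passage to the corollary is only bookkeeping. The only delicate point is tracking the constant through the counting estimate, since the naive union bound over $\cE^{-3}$ initially produces a factor of $\rho^*$ which has to be folded into the overall constant prefactor.
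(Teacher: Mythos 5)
Your proof is correct and follows the same route as the paper: a union bound over $\cE^{-3}(M)$ combined with the per-index estimate of Lemma \ref{lem:Fb}. You are also right that the cardinality count forces a $\rho^*$ factor: from the definition $\cE(M)=[-\rho^*,\rho^*]\cap\{i2^{-M}\}_{i\in\Z}$ one gets $|\cE(M)|\leq 2\rho^*2^{M}+1$, so the union bound produces a quantity of order $\rho^* x_0 2^{-M}$, not $x_0 2^{-M}$. The paper's one-line proof asserts $|\cE^{-2}|=2^M-3$, which does not match this definition (and the exponent should presumably be $-3$ to match the corollary's statement); your bookkeeping is the more careful one, and the $\rho^*$ dependence does need to be folded into the prefactor, consistent with how the corollary is subsequently invoked with $\rho^*=\mu^*$ fixed.
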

\begin{proof}
	As $|\cE^{-2}|= 2^M-3$, using union bound and Lemma \ref{lem:Fb} we obtain the result.
\end{proof}

Recall $I^{\rho}$ from \eqref{Ir}. Let $x_0>0$ and $N_0>(\rho^*)^3$. For $N>N_0$  define the family of functions $\{F^{N,x_0}_\mu\}_{\mu\in [-\rho^*,\rho^*]}$ through
\begin{align}\label{F}
F^{N,x_0}_\mu(i)=I^{1/2-\frac{\mu}4N^{-1/3}}_i \quad \forall i\in  \llbracket-x_0^N,x_0^N\rrbracket
\end{align}
When the interval is clear from the context we shall omit the superscript $x_0$.
We call $\rho_0\in (-\rho^*,\rho^*)$ an epoch of $	F^{N}$ if for some $i\in [-x_0^N,x_0^N]$, $	F^{N}_{\cdot}(i)$ is not constant on  any open interval containing $\rho_0$.  Our next result shows that with high probability the epochs of $F^{N}_\rho$ are not too close. Let $\delta>0$ and define
\begin{align}\label{A}
\cA^{\delta,N,\rho^*,x_0}:=\{\text{there exist epochs $\rho_0,\rho_1\in [-\rho^*,\rho^*]$ of $F^{N}$ such that $\abs{\rho_1-\rho_0}\leq \delta$}\}.
\end{align}
\begin{proposition}\label{prop:ub2}
	For  $\rho^*\geq 1$, $x_0\geq 1$, $(32)^3\vee 64(\rho^*)^3<N$ and $\delta< 2^{-4}x_0^{-1/2}$
	\begin{align*}
	\P\big(\cA^{\delta,N,\rho^*,x_0}\big)\leq 2^{20}x_0 \delta.
	\end{align*}
\end{proposition}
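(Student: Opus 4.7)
The plan is to transfer the question to the discrete dyadic grid of Subsection~\ref{subsec:sh1} through Theorem~\ref{thm:FS}, and then extract the bound from Corollary~\ref{cor:Fb} at a scale $M$ chosen so that $2^{-M}$ is of order $\delta$.

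First I would pick $M \in \N$ with $2^{-M} \leq \delta/8$, for instance $M = \lceil \log_2(8/\delta) \rceil$, so that the $\mu$-grid spacing $h := 4 \cdot 2^{-M}$ associated to the intensities $\rho_i = 1/2 - i 2^{-M} N^{-1/3}$ satisfies $h \leq \delta/2$. The hypothesis $\delta < 2^{-4} x_0^{-1/2}$ then yields $M > \tfrac{1}{2}\log_2(x_0) + 4$, and combined with $(32)^3 \vee 64(\rho^*)^3 < N$ this places us inside the hypotheses of Corollary~\ref{cor:Fb}. By Theorem~\ref{thm:FS}, the joint law of $(I^{\rho_i})_{i \in \Ind}$ coincides with that of $\h{f}^{M,N} = \md^{(|\Ind|)}(\h{I}) = \nc^{(|\Ind|)}(\h{I})$ from~\eqref{hf}, so under this coupling $F^N_{\mu_i} = f^i$ at the grid points $\mu_i = 4 i \cdot 2^{-M}$, and the indicator $1_{B^{i,N}}$ from~\eqref{Bset} records precisely whether at least one epoch of $F^N$ lies in the $\mu$-interval $(\mu_i, \mu_{i+1}]$.

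On the event $\cA^{\delta, N, \rho^*, x_0}$, pick two epochs $\mu_0 < \mu_1 \in [-\rho^*, \rho^*]$ with $\mu_1 - \mu_0 \leq \delta$ and let $i \leq j$ denote the indices of the cells containing them. Since $\mu_j < \mu_1 \leq \mu_0 + \delta \leq \mu_{i+1} + \delta$ and $h \leq \delta/2$, a direct computation gives $j - i \leq 2$. When $j - i \in \{1, 2\}$ both $B^{i,N}$ and $B^{j,N}$ fire, forcing $F_{B^{k}} \geq 2$ for an appropriate $k \in \cE^{-3}$; Corollary~\ref{cor:Fb} bounds this contribution by $2^{20} x_0 \cdot 2^{-M} \leq 2^{17} x_0 \delta$.

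The delicate case is $j = i$, where both epochs sit inside a single cell of width $h \leq \delta/2$; the corresponding event is contained in $\cA^{\delta/2, N, \rho^*, x_0}$. Iterating the estimate at the dyadic scales $\delta, \delta/2, \delta/4, \ldots$ produces the telescoping bound
\[
\P(\cA^{\delta, N, \rho^*, x_0}) \;\leq\; 2^{17} x_0 \delta \sum_{k \geq 0} 2^{-k} + \lim_{k \to \infty} \P(\cA^{\delta/2^k, N, \rho^*, x_0}) \;=\; 2^{18} x_0 \delta,
\]
where the limit vanishes because distinct epochs almost surely do not coincide, a consequence of the absolute continuity of the finite-dimensional distributions supplied by Theorem~\ref{thm:FS}. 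This is well inside the claimed $2^{20} x_0 \delta$.

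The main obstacle is cleanly handling the same-cell case: a single-scale union bound does not record ``two epochs in one cell'' as $F_{B^i} \geq 2$, so one either telescopes over dyadic scales as above, or extends Lemma~\ref{lem:Fb} to capture that configuration at a finer scale. Verifying the null-probability of exact coincidences between epochs, needed to close the telescoping, is the only additional input beyond the tools already developed in Sections~\ref{sec:SoQ}--\ref{sec:bus}.
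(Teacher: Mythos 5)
Your overall strategy is the same as the paper's (dyadic decomposition in $\mu$ together with Corollary~\ref{cor:Fb}, then a geometric series over scales), but there is a concrete arithmetic gap in the non--same-cell case.

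You choose $M=\lceil \log_2(8/\delta)\rceil$, which gives a cell width $h$ between $\delta/4$ and $\delta/2$, and then assert ``a direct computation gives $j-i\leq 2$.'' That computation does not close: if $\mu_0\in[\mu_i,\mu_{i+1})$ and $\mu_1\in[\mu_j,\mu_{j+1})$ with $\mu_1-\mu_0\leq\delta$, then $(j-i-1)h<\delta$, i.e.\ $j-i<1+\delta/h$. Since $h$ can be as small as just above $\delta/4$, you only get $j-i\leq 4$, so the two firing cells may be separated by up to three cells. Corollary~\ref{cor:Fb} controls the event $F_{B^k}=1_{B^{k,N}}+1_{B^{k+1,N}}+1_{B^{k+2,N}}\geq 2$, i.e.\ two firing cells among \emph{three} consecutive ones, and therefore does not cover the configuration $j-i\in\{3,4\}$. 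The paper sidesteps this by scaling with the \emph{actual} gap rather than the given $\delta$: it introduces $\cB^{i,N}$ as the event that two epochs lie at distance in the dyadic range $(2^{-i},2^{-i+1}]$, so that at scale $i$ the two epochs are automatically in distinct cells and at most two cells apart, and then it takes a union bound over all $i\geq M$ where $2^{-M}<\delta\leq 2^{-M+1}$. Your recursion $\P(\cA^\delta)\leq C\delta + \P(\cA^{\delta/2})$ would serve the same purpose as the paper's sum over scales, but only once the single-scale step is correct; with your current choice of $h$ you would need a version of Lemma~\ref{lem:Fb}/Corollary~\ref{cor:Fb} that monitors five consecutive cells, which the paper does not provide. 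The fix is simply to take $M$ so that $2^{-M}<\delta\leq 2^{-M+1}$ (or equivalently $h$ of order $\delta$, not of order $\delta/2$); then indeed $j-i\leq 2$ and the argument goes through.

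A secondary, more cosmetic remark: your justification that $\lim_k\P(\cA^{\delta/2^k})=0$ should not lean on ``absolute continuity of finite-dimensional distributions''; the clean reason is that for fixed $N$, $\rho\mapsto B^\rho_{0,e_1}$ (and its finitely many translates needed on $[-x_0^N,x_0^N]$) is a Poisson-type jump process, so the epochs in $[-\rho^*,\rho^*]$ are a.s.\ finitely many and pairwise distinct, hence the minimal gap is a.s.\ positive. The paper avoids this entirely by never splitting off a same-cell case.
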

\begin{proof}
	Suppose there exist two epochs $\rho_0,\rho_1\in [-\rho^*,\rho^*]$  of $F^{N}$ such that $\abs{\rho_1-\rho_0}=\delta<1$. Let $M\in\N$ be such that 
	\begin{align}\label{sand}
	2^{-M}<\delta \leq 2^{-M+1}.
	\end{align}
	Then there exists $\rho_e\in\cE(M)$ and two possible cases (without loss of generality we assume $\rho_0<\rho_1$)
	\begin{enumerate}[label=Case \arabic*:]
		\item  	The two epochs belong to two adjacent intervals of size $2^{-M}$ i.e.
		\begin{align}\label{xe1}
		\rho_0\in[\rho_e,\rho_e+2^{-M}) \quad \text{and} \quad \rho_1\in[\rho_e+2^{-M},\rho_e+2^{-M+1}).
		\end{align}
		\item $\rho_0$ and $\rho_1$ belong to the leftmost and  rightmost intervals  of three consecutive intervals of size $2^{-M}$ i.e.
		\begin{align}\label{xe}
		\rho_0\in[\rho_e,\rho_e+2^{-M}) \quad \text{and} \quad \rho_1\in[\rho_e+2^{-M+1},\rho_e+2^{-M+2}).
		\end{align}
	\end{enumerate}
	Form \eqref{mon}, if $\rho_0$ and $\rho_1$ are epochs of $F^{N}$ satisfying either \eqref{xe1} or \eqref{xe}, then
	\begin{enumerate}[label=Case \arabic*:]
		\item  	\eqref{xe1} implies
		\begin{align}\label{xe2}
		I^{\rho_e,N}< I^{\rho_e+2^{-M},N}< I^{\rho_e+2^{-M+1},N}.
		\end{align}
		\item \eqref{xe} implies
		\begin{align}\label{xe3}
		I^{\rho_e,N}< I^{\rho_e+2^{-M},N}< I^{\rho_e+2^{-M+2},N}.
		\end{align}
	\end{enumerate}
	The inequalities in \eqref{xe2}--\eqref{xe3} should be understood element-wise, where strict inequality $<$ means that the vector to its right has at least one element that is strictly larger than the corresponding element of the vector to the left of it.   
	Denote
	\begin{align*}
	&\cB^{M,N}:=\{\text{there exists epochs $\rho_0,\rho_1\in [-\rho^*,\rho^*]$ of $F^{N}$ such that $2^{-M}<|\rho_1-\rho_0|\leq 2^{-M+1}$}\}\\
	&\cC^{M,N}:=\{\text{$\exists \rho_e\in \cE$ such that either \eqref{xe2} or \eqref{xe3} hold} \}.
	\end{align*}
	We have thus shown that
	\begin{align*}
	\cB^{M,N}\subseteq \cC^{M,N}.
	\end{align*}
	It follows from Corollary \ref{cor:Fb} that 
	\begin{align}\label{B}
	\P(\cC^{M,N})\leq 2^{20}x_02^{-M}.
	\end{align}
	Note that 
	\begin{align*}
	\cD^{M,N}\subseteq \bigcup_{M\leq i}\cB^{i,N}\subseteq \bigcup_{M\leq i}\cC^{i,N},
	\end{align*}
	where 
	\begin{align*}
	\cD^{M,N}:=\{\text{there exists epochs $\rho_0,\rho_1\in [-\rho^*,\rho^*]$ of $F^{N}$ such that $|\rho_1-\rho_0|\leq 2^{-M+1}$}\}
	\end{align*}
	Using \eqref{B} and union bound, 
	\begin{align}\label{Cb}
	\P(\cD^{M,N})\leq \sum_{i=M}^{\infty}\P(\cC^{i,N})\leq 2^{20}x_02^{-M+1}.
	\end{align} 
	%
	Finally, using \eqref{sand} in \eqref{Cb}
	\begin{align*}
	&\P\big(\text{$\exists$ epochs $\rho_0,\rho_1\in [-\rho^*,\rho^*]$ of $F^{N}$ such that $|\rho_1-\rho_0|\leq \delta$}\big)\leq \P(\cD^{M,N})\\
	&\leq 2^{21}x_0 \delta.
	\end{align*}
	
\end{proof}
\section{Convergence to the Stationary Horizon}
In this section we prove the main results of this work, i.e. we show that the Busemann process on a horizontal line, converges to a non-trivial limit under diffusive scaling - Theorem \ref{thm:sh} and Theorem \ref{thm:prop}. The sense in which the convergence takes place will be defined rigorously in Subsection \ref{subsec:sh2}, here we only try to give a general idea of our intentions. Recall $\{I^{\mu}\}_{\mu\in\R}$ from \eqref{Ir}, a family of sequences sampling the value of the Busemann process on the horizontal edges that belong to the $x$-axis. As we have an eye to use $\{I^{\mu}\}_{\mu\in\R}$ as initial conditions for our LPP model in future work,  a problem to which the  relevant scaling is the $1:2:3$-KPZ scaling, the scaling for the initial condition is the diffusive one - $1:2$. For a fixed $\mu\in \R$ we interpolate between the points of $I^{\mu,N}=I^{1/2-\frac{\mu}4N^{-1/3}}$ and scale diffusively to obtain a continuous function $G^N_{\mu}$ on the real line. One should think of $G^N$ as a process that takes values in the space of continuous functions and is indexed by its drift $\mu\in\R$. The goal of this section is to show that $G^N$ converges in some sense to some non-trivial limit $G$ which we refer to as  the \textit{Stationary Horizon}. The processes $G^N$ are rcll, inherited from Property \eqref{rc} of the Busemann process, hence the appropriate and perhaps natural sense of convergence is that of the Skorohod topology. 

This section contains four subsections, intended to establish  the two necessary and sufficient conditions for convergence in the Skorohord topology (see Theorem \ref{thm:tc}).  Section \ref{subsec:sh2} introduces the prelimiting sequence $\{G^N\}_{N\in\N}$, the space in which it lives and its associated topology. Here we also prove Condition \ref{fdc} of Theorem \ref{thm:tc}.  Section \ref{subsec:fdd}  shows that  the finite dimensional distributions of $G^N$ converge weakly, and so verifying Condition \ref{mocc} of Theorem \ref{thm:tc}. Finally, Section \ref{subsec:pet} combines the results of the preceding Sections to complete the proof of Theorem \ref{thm:sh} and Theorem \ref{thm:prop}.

\subsection{The space $D(\R,\cS)$}\label{subsec:sh2}
This subsection is dedicated to defining the processes $G^N$, the space $\cS$ where they take values, and the space $D(\R,\cS)$ in which they 'live'. We will also prove some of the properties of these processes in this subsection. Here we also prove one of the main result of this section -- Proposition \ref{prop:rc}, essentially tightness for the sequence $G^N$

Let $I\subset\R$ be an interval and let $C(I,\R)$ be the set of real valued continuous functions on $I$. We will use the abbreviation $C(I)$. For $n\in\N$ define the pseudo-metric $d_n$ on $C(\R,\R)$ 
\begin{align*}
	d_n(f,g)=\sup_{x\in[-n,n]}|f(x)-g(x)|.
\end{align*} 
Let $\cS=C(\R,\R)$ be the set of continuous functions on the real line equipped with the following metric
\begin{align*}
	d(f,g)=\sum_{i=1}^{\infty}2^{-i}\frac{d_i(f,g)}{1+d_i(f,g)}.
\end{align*} 
The space $\cS$ is a complete separable metric space (Lemma \ref{lem:cs}). Note that since for any $f,g\in\cS$ it holds that  $d_i(f,g)\leq d_{i+1}(f,g)$, we have the following bound which we will make use of often in the sequel
\begin{align}\label{maprox}
	d(f,g)\leq d_n(f,g)+2^{-n} \quad \forall n\in\N.
\end{align}
Let $I=\{I_i\}_{i\in\Z}\in \qs$. Define the linear map $\cm:\qs\rightarrow \cS$ through
\begin{align}\label{cm}
	\cm(I)(t)=S^I_{\floor{t}}+(t-\floor{t})(S^I_{\floor{t}+1}-S^I_{\floor{t}})  \quad t\in\R
\end{align}
where
\begin{align}\label{s}
	S^I_i=
	\begin{cases}
		S^{1,i}(I) & 0<i\\
		0 & i=0\\
		-S^{i+1,0}(I) & i<0,
	\end{cases}
\end{align}
and where the map $S^{k,l}$ was defined in \eqref{summ}.
In words, the map $\cm$ constructs a function $\cm(I)$ in the following way; it considers its input, say $I$, as the increments of the piecewise linear function $\cm(I)$ on the integers i.e..   
\begin{align}\label{eq17}
	\cm(I)(t)-\cm(I)(t-1)=I_t \quad t\in \Z.
\end{align}
As such continuous functions are only unique up to an additive constant, $\cm(I)$ is chosen such that it vanishes at the origin.   
Display \eqref{eq17} implies that for integers $s\leq t$
\begin{align}\label{sum}
	\cm(I)(t)-\cm(I)(s)=\sum_{s+1}^tI_i. 
\end{align}
Recall the sequences $I^{\rho}$ from \eqref{Ir}. Let $G^N:\R\rightarrow\cS$
be defined through
  \begin{align}
 	\tilde{G}^{N}_\mu(x)=&
 	\begin{cases}
 		N^{-1/3}\cm(I^{1/2-\frac\mu4N^{-1/3}}-\bm{2})(xN^{2/3})  & -N^{1/3}\leq \mu<N^{1/3}\\
 		N^{-1/3}\cm(I^{1/4}-\bm{2})(xN^{2/3}) &\qquad N^{1/3} \leq \mu \\
 		N^{-1/3}\cm(I^{3/4}-\bm{2})(xN^{2/3}) &\qquad \mu<-N^{1/3}
 	\end{cases}\nonumber \\
 \intertext{and,}
 &G^N_\mu(x)=\tilde{G}^N_{\mu+}(x)\label{G}.	
 \end{align}
 The reason for defining $G^N$ as in \eqref{G}, is so that $G^N$ is a rcll process. Indeed, from the fact that $\mu\mapsto I^\mu$ is a rcll process  $\tilde{G}^N$ is lcrl process.   For fixed $N$, we will consider $G^{N}$ as a function $\mu \mapsto \cS$. For $x_0>0$, we let $F^{N,x_0}:\R\rightarrow C[-x_0,x_0]$ be the restriction of $G^{N}$ to $[-x_0,x_0]$ i.e.
  \begin{align*}
  	F^{N,x_0}(x)=G^{N}(x) \quad x\in [-x_0,x_0].
  \end{align*}
  Recall that $D(\R,\cS)$ is the space of rcll functions from $\R$ to $\cS$. The next lemma shows that $\{G^{N}\}_{N\in\N}$ is a sequence of elements in  $D(\R,\cS)$.
\begin{lemma}
	 $G^{N}\in D(\R,\cS)$ for every $N\in \N$.
\end{lemma}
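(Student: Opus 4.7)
I plan to verify the two defining features of $D(\R,\cS)$ in turn: first, that $G^N_\mu \in \cS$ for each $\mu \in \R$, and second, that the map $\mu \mapsto G^N_\mu$ is right continuous with left limits in the metric $d$ of $\cS$.

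The first point is immediate from the construction. By \eqref{cm}, $\cm(I)$ is piecewise linear with knots at the integers, hence continuous, for every $I \in \qs$. Each of the three cases in \eqref{G} composes $\cm$ with an affine rescaling of argument and output, so $G^N_\mu$ is continuous on $\R$ and therefore $G^N_\mu \in \cS$.

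For the second point, my plan is to reduce to the behaviour of finitely many coordinates of the Busemann process using the bound $d(f,g) \leq d_n(f,g) + 2^{-n}$ from \eqref{maprox}. It suffices to show that for each fixed $n \in \N$ the restriction $\mu \mapsto G^N_\mu|_{[-n,n]}$ is rcll in the uniform norm on $C[-n,n]$; given $\e > 0$, choosing $n$ with $2^{-n}<\e/2$ then upgrades this to rcll-ness in $d$. For fixed $n$, the piecewise-linear structure of $\cm$ forces $G^N_\mu|_{[-n,n]}$ to depend only on the finite family $\{I^{\rho(\mu)}_i\}_{|i|\leq\lceil nN^{2/3}\rceil+1}$, where $\rho(\mu) = 1/2 - \mu/4\,N^{-1/3}$ on the central piece of \eqref{G} and $\rho(\mu)$ is constant on the two outer pieces. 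Moreover, for each such $i$ and each $x \in [-n,n]$, $\mu \mapsto G^N_\mu(x)$ is a continuous affine combination of these finitely many coordinates.

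The key input will be Property \ref{rc} of the Busemann process (stated just above \eqref{mon}), which gives that $\rho \mapsto B^\rho_{x,y}(\omega)$ is rcll for every $x,y \in \Z^2$ and every $\omega \in \Omega_2$. Composing with the continuous monotone function $\mu \mapsto \rho(\mu)$ and then taking finite linear combinations with continuous coefficients preserves the existence of one-sided limits and the corresponding one-sided continuity. This gives the required rcll-ness of $\mu \mapsto G^N_\mu|_{[-n,n]}$ in $C[-n,n]$, and combined with \eqref{maprox} it yields $G^N \in D(\R,\cS)$. The only mildly delicate bookkeeping is the orientation introduced by $\rho(\mu)$ being decreasing rather than increasing in $\mu$, which could in principle swap the roles of ``right continuous'' and ``has left limits''; modulo checking this orientation, the argument is a routine reduction to finitely many coordinates.
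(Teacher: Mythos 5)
Your proof is correct and follows essentially the same route as the paper: both reduce to finitely many Busemann coordinates via the bound $d(f,g)\leq d_n(f,g)+2^{-n}$ and then transfer the rcll property of the Busemann process, with the paper constructing the left limit in $\cS$ explicitly where you invoke preservation of one-sided limits under finite linear combinations. Your instinct to check the orientation is well placed: since $\mu\mapsto 1/2-\tfrac{\mu}{4}N^{-1/3}$ is strictly \emph{decreasing}, composing the rcll map $\rho\mapsto I^\rho$ with it actually yields a function that is left continuous with right limits in $\mu$, the opposite convention from $D(\R,\cS)$; the paper's own proof passes over this without comment, and although it is a benign sign-convention slip (repairable by flipping the sign in $\rho(\mu)$, or by reparametrizing $\mu\mapsto -\mu$) that does not affect the downstream analysis, a strict reading of the definitions does require the fix you flagged.
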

\begin{proof}
	Clearly $G^{N}_\mu\in \cS$ for every $\mu\in\R$, so we must show that for every $N\in\N$ and $\mu\in\R$
	\begin{align}\label{rc2}
		\lim_{h\downarrow 0} d(G^N_\mu,G^N_{\mu+h})=0,
	\end{align}
and the following limit exists in $\cS$
\begin{align}\label{el}
	\lim_{h\uparrow 0} G^N_{\mu+h}.
\end{align}
	Note that from \eqref{G}, we need only show \eqref{rc2}--\eqref{el} for $-N^{1/3}\leq \mu<N^{1/3}$. From the rcll property (\ref{rc}) of the Busemann process, 
	\begin{align}\label{dF}
			\lim_{h\downarrow 0} d_n(F^{N,n}_\mu,F^{N,n}_{\mu+h})=0\qquad -N^{1/3}\leq \mu<N^{1/3}, n\in \N.
	\end{align}
	Now, fix $\mu\in[-N^{1/3},N^{1/3})$ 	and let $M\in\N$. From \eqref{dF}, there exists $h>0$ small enough such that
	\begin{align}\label{ub12}
		d(G^N_\mu,G^N_{\mu+h})<\sum_{i=1}^M2^{-i}\frac{d_i(F^{N,i}_\mu,F^{N,i}_{\mu+h})}{1+d_i(F^{N,i}_\mu,F^{N,i}_{\mu+h})}+2^{-M-1}<2^{-M}.
	\end{align}
	As \eqref{ub12} holds for every $M\in\N$, \eqref{rc2} follows. Next we show \eqref{el}. Once again, from  the rcll property (\ref{rc}) of the Busemann process, the following limit exists in $(d_n,C([-n,n],\R))$ for every $n\in\N$
	\begin{align}\label{lim}
		\hat{F}^{N,n}_{\mu-}:=\lim_{h\uparrow 0} F^{N,n}_{\mu+h}.
	\end{align}
	For $n\in\N$, construct the function
	\begin{align*}
		\tilde{F}^{N,n}_{\mu-}(x)=
		\begin{cases}
			\hat{F}^{N,n}_{\mu-}(x) & x\in[-n,n]\\
			\hat{F}^{N,n}_{\mu-}(-n) & x<-n\\
			\hat{F}^{N,n}_{\mu-}(n) & x>n.
		\end{cases}
	\end{align*}
	It is not hard to see that $\tilde{F}^{N,n}_{\mu-}\in \cS$ and that $\{\tilde{F}^{N,n}_{\mu-}\}_{n\in\N}$ is a Cauchy sequence in $\cS$, so that there exists $ G^{N}_{\mu-}\in\cS$ such that
	\begin{align}\label{lim2}
	d(\tilde{F}^{N,n}_{\mu-},G^{N}_{\mu-})\rightarrow 0 \qquad n\rightarrow\infty. 
	\end{align}
	For $M\in\N$ and  $h>0$ 
	\begin{align*}
		&d(G^N_{\mu-h},G^N_{\mu-})	\leq \sum_{i=1}^M2^{-i}\frac{d_i(F^{N,i}_{\mu-h},G^N_{\mu-})}{1+d_i(F^{N,i}_{\mu-h},G^N_{\mu-})}+2^{-M-1}\\
		&\leq \sum_{i=1}^M2^{-i}\frac{d_i(\tilde{F}^{N,n}_{\mu-},G^N_{\mu-})+d_i(F^{N,i}_{\mu-h},\tilde{F}^{N,n}_{\mu-})}{1+d_i(F^{N,i}_{\mu-h},G^N_{\mu-})}+2^{-M-1}.
	\end{align*}
From the construction of $\tilde{F}^{N,n}_{\mu-}$ and \eqref{lim2}, for every $i\in\N$ and $n>i$, $d_i(\hat{F}^{N,n}_{\mu-},G^N_{\mu-})=0$. This, with \eqref{lim} implies that for $h$ small enough
\begin{align}\label{ub13}
	d(G^N_{\mu-h},G^N_{\mu-})\leq 2^{-M}.
\end{align}
As \eqref{ub13} holds for every $M\in\N$
\begin{align*}
	\lim_{h\uparrow 0} G^N_{\mu+h}=G^{N}_{\mu-},
\end{align*}
which proves \eqref{el}.
\end{proof}
Next we would like to show that $G^N$ is stochastically continuous. We recall the definition of stochastic continuity. For $f\in  D(\R,\cS)$ we denote  $\triangle f(\mu)= f(\mu)-f(\mu -)$. Note that $\triangle f(\mu)\neq 0$ if and only if  $f$ has a jump at $\mu$. A process in $D(\R,\cS)$ is stochastically continuous if for every $\mu\in\R$ $\P(\triangle f(\mu)=0)=1$. Our next result shows that $G^N$ is stochastically continuous. In fact we prove something stronger.  
\begin{lemma}\label{lem:cond1}
	 Fix $\mu\in \R$ and $0<\epsilon<1$. Then for $0<\delta<1$ and $N\geq 2(5+|\mu|)^3$
	\begin{align*}
		\P\Big(\sup_{s,t\in (\mu-\delta,\mu]}d(G^{N}_t,G^{N}_{s})>\epsilon\Big)\leq (2^{40}\epsilon^{-20}+1)\delta.
	\end{align*}
\end{lemma}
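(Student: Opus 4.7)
The argument combines three ingredients: truncation of the metric $d$ by the pseudometrics $d_n$, monotonicity of the Busemann process in the density parameter, and a first-moment (Markov) estimate obtained via the queueing identification of Theorem~\ref{thm:FS}.

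I would pick the integer $n = \lceil \log_2(4/\epsilon)\rceil$ so that $2^{-n} < \epsilon/4$. The bound \eqref{maprox} then reduces the problem to controlling $\P\big(\sup_{s,t \in (\mu-\delta,\mu]} d_n(G^N_t, G^N_s) > 3\epsilon/4\big)$. Writing $\rho(\mu) = 1/2 - \mu N^{-1/3}/4$, the monotonicity property \ref{rc} of the Busemann process gives $\rho \mapsto B^\rho_{x,x+e_1}$ non-increasing, so $\mu \mapsto I^{\rho(\mu)}_i$ is non-decreasing for each $i \in \Z$, hence $\mu \mapsto G^N_\mu(x)$ is non-decreasing for $x \geq 0$ and non-increasing for $x \leq 0$. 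Combined with the pointwise right-continuity of $G^N_\cdot(x)$ at $\mu-\delta$ (a consequence of $G^N \in D(\R,\cS)$), this yields for every fixed $x$
\begin{align*}
\sup_{s,t \in (\mu-\delta,\mu]} |G^N_t(x) - G^N_s(x)| = |G^N_\mu(x) - G^N_{\mu-\delta}(x)|,
\end{align*}
so that $\sup_{s,t \in (\mu-\delta,\mu]} d_n(G^N_t, G^N_s) = d_n(G^N_\mu, G^N_{\mu-\delta})$. By linearity of $\cm$, the difference $G^N_\mu(x) - G^N_{\mu-\delta}(x) = N^{-1/3}\cm(I^{\rho(\mu)} - I^{\rho(\mu-\delta)})(xN^{2/3})$ is the piecewise-linear extension of a cumulative sum of the non-negative increments $I^{\rho(\mu)}_i - I^{\rho(\mu-\delta)}_i$, so it is monotone in $|x|$; thus $d_n(G^N_\mu, G^N_{\mu-\delta}) = \max(V_+, V_-)$ with $V_+ := G^N_\mu(n) - G^N_{\mu-\delta}(n) \geq 0$ and $V_- := G^N_{\mu-\delta}(-n) - G^N_\mu(-n) \geq 0$.

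To bound $\E[V_\pm]$ I would apply Theorem~\ref{thm:FS} to the decreasing pair $(\rho(\mu-\delta), \rho(\mu))$: $(I^{\rho(\mu-\delta)}, I^{\rho(\mu)}) \stackrel{d}{=} \md^{(2)}(I^1, I^2) = (I^1, D(I^1, I^2))$, with $I^1, I^2$ independent i.i.d.\ exponential sequences of intensities $\rho(\mu-\delta)$ and $\rho(\mu)$. Each increment then has mean
\begin{align*}
\E[I^{\rho(\mu)}_0 - I^{\rho(\mu-\delta)}_0] = \frac{1}{\rho(\mu)} - \frac{1}{\rho(\mu-\delta)} = \frac{\delta N^{-1/3}/4}{\rho(\mu)\rho(\mu-\delta)}.
\end{align*}
The hypothesis $N \geq 2(5+|\mu|)^3$ keeps both densities bounded away from $0$ and $1$, so this mean is at most $C\delta N^{-1/3}$ for a universal $C$. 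Summing over the $\lceil nN^{2/3}\rceil$ relevant indices (plus a single interpolation contribution) and rescaling by $N^{-1/3}$ gives $\E[V_\pm] \leq C'n\delta$. Markov's inequality then yields $\P(V_\pm > 3\epsilon/4) \leq C''n\delta/\epsilon$; inserting $n = O(\log(1/\epsilon))$ and the very crude bound $\log_2(1/\epsilon) \leq \epsilon^{-19}$ valid for $0 < \epsilon < 1$ dominates the total probability by $(2^{40}\epsilon^{-20}+1)\delta$, with plenty of slack.

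The delicate point is the monotone reduction: the supremum is over the \emph{open}-closed interval $(\mu-\delta,\mu]$, so one must use rcll-ness of $G^N_\cdot(x)$ at the left endpoint to identify $\inf_{s \in (\mu-\delta,\mu]} G^N_s(x)$ with $G^N_{\mu-\delta}(x)$. The piecewise-linear interpolation in $\cm$ also requires a brief check that the spatial sup over $[-n,n]$ is attained at $x = \pm n$; this holds because the relevant difference is a cumulative sum of non-negative increments.
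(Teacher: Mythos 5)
Your proposal is correct, and it takes a genuinely different route from the paper. After the same monotone reduction to $d_n(G^N_\mu,G^N_{\mu-\delta})=\max(V_+,V_-)$ (including the correct treatment of the half-open interval via right-continuity, which you rightly flag as the delicate point), the paper does \emph{not} estimate the size of $V_\pm$; it instead bounds the probability that the two sequences $I^{1/2-\frac{\mu}{4}N^{-1/3}}$ and $I^{1/2-\frac{\mu-\delta}{4}N^{-1/3}}$ disagree \emph{at even one index} in $\llbracket -nN^{2/3}-1,nN^{2/3}+1\rrbracket$, invoking \cite{BBS20}[Lemma 5.8] to get a bound $\delta(1+e^{10n})$; the choice $n=\lceil\log_2(2\epsilon^{-1})\rceil$ then produces the stated $\delta(1+2^{40}\epsilon^{-20})$. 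Your argument replaces that imported structural estimate with an elementary first-moment bound: by monotonicity \eqref{mon} each increment $I^{\rho(\mu)}_i-I^{\rho(\mu-\delta)}_i$ is non-negative, its mean is $\tfrac{\delta N^{-1/3}/4}{\rho(\mu)\rho(\mu-\delta)}\leq 4\delta N^{-1/3}$ under $N\geq 2(5+|\mu|)^3$, so $\E[V_\pm]\lesssim n\delta$ by linearity, and Markov gives $\lesssim n\delta/\epsilon$, which dominates the target with much room to spare. Two remarks. First, your invocation of Theorem \ref{thm:FS} is superfluous: the mean computation needs only the marginal $\mathrm{Exp}(\rho)$ distribution of each $I^\rho_i$ and linearity of expectation, not the joint law $(I^1,D(I^1,I^2))$. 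Second, while your bound is quantitatively sharper ($O(\delta\log(1/\epsilon)/\epsilon)$ versus $O(\delta\epsilon^{-20})$), the paper's structural lemma --- that two nearby Busemann sequences agree \emph{exactly} on a diffusive window with probability $1-O(\delta)$ --- is part of the broader ``stickiness'' theme and is conceptually aligned with the arguments in Proposition \ref{prop:ub2}; your Markov route is more self-contained but does not give that exact-agreement statement.
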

\begin{proof}
	Without loss of generality, we assume $\mu\in [-N^{1/3}+1,N^{1/3})$, the extension to $\R$ is straightforward as $G^N$ is constant outside $[-N^{1/3}+1,N^{1/3})$. From \eqref{maprox}
	\begin{align}\label{inc}
		\Big\{\sup_{s,t\in (\mu-\delta,\mu]}d(G^{N}_t,G^{N}_{s})>\epsilon\Big\}\subseteq \Big\{\sup_{s,t\in (\mu-\delta,\mu]}d_n(G^{N}_t,G^{N}_{s})>\epsilon-2^{-n}\Big\}
	\end{align}
Recall \eqref{Ir} and observe that
\begin{align}
	&\Big\{\sup_{s,t\in (\mu-\delta,\mu]}d_n(G^{N}_t,G^{N}_{s})>0\Big\}\nonumber\\
	&\subseteq\big\{d_n(G^{N}_{\mu},G^{N}_{\mu-\delta})>0\big\}\nonumber\\
	&=\big\{ I^{1/2-\frac{\mu}{4}N^{-1/3}}_i\neq I^{1/2-\frac{\mu-\delta}{4}N^{-1/3}}_i, \quad \text{ for some } -nN^{2/3}-1 \leq i\leq nN^{2/3}+1\big\}\label{eq15},
\end{align}
where in the second line we used the definition of $G^N$ and \eqref{mon} to conclude that  $d_n(G^N_t,G^N_s)\leq d_n(G^N_\mu,G^N_{\mu-\delta})$ for any $s,t\in(\mu-\delta,\mu]$. Next use \cite{BBS20}[Lemma 5.8] (see also the discussion on p.19 therein) with $\alpha=1/2-\frac{\mu-\delta}{4}N^{-1/3}$, $\beta=1/2-\frac{\mu}{4}N^{-1/3}$, $\theta=N^{-1/3}$ and $N\geq 2(5+|\mu|)^3$ to conclude that
\begin{align}\label{ub65}
	\P(\text{the event in \eqref{eq15}})\leq \delta(1+e^{10n}).
\end{align}
For the details on the derivation of \eqref{ub65} see \eqref{pr}. For $n=\ceil{\log_2(2\epsilon^{-1})}$, using \eqref{inc} 
\begin{align}\label{ub66}
	\P\Big(\sup_{s,t\in (\mu-\delta,\mu]}d(G^{N}_t,G^{N}_{s})>\epsilon\Big)\leq \P\Big(\sup_{s,t\in (\mu-\delta,\mu]}d_n(G^{N}_t,G^{N}_{s})>\epsilon/2\Big).
\end{align}
Using the bound \eqref{ub65} in \eqref{eq15} and then in \eqref{ub66}, that $n\leq \log_2(2\epsilon^{-1})+1$ and that $\log_2(e)<2$ gives the bound
\begin{align*}
	\P\Big(\sup_{s,t\in (\mu-\delta,\mu]}d(G^{N}_t,G^{N}_{s})>\epsilon\Big)\leq \big(1+2^{40}\epsilon^{-20}\big)\delta.
\end{align*}
\end{proof}
Next we would like to show that condition \ref{fdc} of Theorem \ref{thm:tc} holds for the sequence $G^N$. Recall the modulus continuity function $\omega$ for rcll functions from \ref{moc}.
\begin{proposition}\label{prop:rc}
	Fix  $\mu^*>0$. For every $\epsilon>0$
\begin{align}\label{cond1}
	\lim_{\delta \rightarrow 0} \limsup_{N\rightarrow \infty} \P\Big(\omega(G^N,\mu^*,\delta)>\epsilon\Big)=0.
\end{align}
\end{proposition}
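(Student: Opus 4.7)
The strategy is to leverage Proposition \ref{prop:ub2}, which says that the epochs of the prelimiting Busemann process (points where $\mu \mapsto F^{N,x_0}_\mu$ changes) are sparse on any fixed interval of densities with high probability. On the event that no two epochs lie within $\delta$ of each other, I plan to use the epochs as partition points, producing a partition of $[-\mu^*, \mu^*]$ with intervals of length exceeding $\delta$ on each of which $G^N_\mu\big|_{[-n, n]}$ is constant in $\mu$, so that the Skorohod modulus vanishes at scale $\epsilon$.

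First I would perform a standard reduction from the metric $d$ on $\cS = C(\R)$ to the seminorm $d_n$ for a suitable $n$. Concretely, pick $n \in \N$ with $2^{-n} < \epsilon/2$; by \eqref{maprox}, any $d_n$-oscillation below $\epsilon/2$ translates to a $d$-oscillation below $\epsilon$. Set $x_0 = n + 1$, chosen so that $G^N_\mu\big|_{[-n, n]}$ is fully determined by $F^{N, x_0}_\mu$ through the interpolation in \eqref{cm}--\eqref{G}. The key structural fact is that between two consecutive epochs of $F^{N, x_0}$ the sequence $F^{N, x_0}_\mu$ is constant in $\mu$, and hence so is $G^N_\mu\big|_{[-n, n]}$.

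Next, I would apply Proposition \ref{prop:ub2} with $\rho^* = \mu^* + 1$ and the above $x_0$, at separation parameter $\delta$. For $\delta$ sufficiently small and $N$ large, the event
\[
A^N_\delta := \bigl\{ \text{no two epochs of } F^{N, x_0} \text{ in } [-\mu^* - 1, \mu^* + 1] \text{ are within } \delta \text{ of each other} \bigr\}
\]
has probability at least $1 - C(n) \delta$ for some $C(n)$ depending only on $n$ (absorbing the $\rho^*$ factor into the constant). On $A^N_\delta$, list the finitely many epochs $\rho_1 < \cdots < \rho_k$ of $F^{N, x_0}$ in $[-\mu^* - 1, \mu^* + 1]$ and take as partition points of $[-\mu^*, \mu^*]$ the endpoints $\pm \mu^*$ together with those $\rho_i$'s falling in $(-\mu^*, \mu^*)$; if an epoch sits within $\delta$ of $\pm \mu^*$, I plan to shift/merge into the buffer zone $[-\mu^* - 1, -\mu^*] \cup [\mu^*, \mu^* + 1]$, in which $A^N_\delta$ also guarantees no nearby epochs. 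On each resulting interval $[t_{i-1}, t_i)$ any epoch can lie only at the left endpoint $t_{i-1}$, so $F^{N, x_0}_\mu$ is constant in the interior of the interval, giving $d_n$-oscillation zero. Combining with the reduction in Step 1, one gets $\omega(G^N, \mu^*, \delta) \leq \epsilon/2$ on $A^N_\delta$, hence
\[
\P\bigl(\omega(G^N, \mu^*, \delta) > \epsilon\bigr) \leq C(n) \delta,
\]
which is uniform in $N$, and taking $\limsup_N$ followed by $\delta \to 0$ yields \eqref{cond1}.

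The main technical obstacle is the partition construction near $\pm \mu^*$: every interval of a Billingsley-type Skorohod partition must have length exceeding $\delta$, while we also must prevent any epoch from sitting in the interior of an interval. My plan is to resolve this by the enlarged-window trick described above, applying Proposition \ref{prop:ub2} on $[-\mu^* - 1, \mu^* + 1]$ so that there is always room on both sides to either use a buffer partition point outside $[-\mu^*, \mu^*]$ or to shift a close-to-boundary epoch off into the buffer without creating a short interval or an interior epoch. Everything else is a bookkeeping exercise using \eqref{maprox} and the constancy of $F^{N, x_0}_\mu$ between epochs.
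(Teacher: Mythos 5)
Your proposal follows essentially the same approach as the paper: reduce the metric $d$ on $\cS$ to the seminorm $d_n$ via \eqref{maprox} with $n$ chosen so that $2^{-n}<\epsilon/2$, observe that the Skorohod modulus $\omega^n$ vanishes whenever the epochs of $F^{N,\cdot}$ are well separated, and then control the probability of closely spaced epochs using Proposition \ref{prop:ub2}. The paper carries this out by directly noting $(\cA^{2\delta,N,\mu^*,n})^c\subseteq\{\omega^n(G^N,\mu^*,\delta)=0\}$ and bounding $\P(\cA^{2\delta,N,\mu^*,n})$; your enlarged window $[-\mu^*-1,\mu^*+1]$ and the "shift/merge" step near $\pm\mu^*$ are attempting to handle a boundary subtlety (an epoch landing in $(-\mu^*,-\mu^*+\delta]$ forces either a short first interval or an interior epoch) that the paper's stated inclusion also glosses over, but "shift/merge" as described does not actually resolve it: if the first epoch lies in $(-\mu^*,-\mu^*+\delta]$ then no admissible partition of $[-\mu^*,\mu^*]$ avoids putting it in the interior of the first block, regardless of how far away the other epochs are. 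Both arguments nonetheless reach the correct conclusion because the offending event has probability $O(\delta)$, which you (and the paper) should flag explicitly rather than leaving to a vague merging heuristic.
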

\begin{proof}
	Recall \eqref{theta}--\eqref{moc}. For $m\in\N$ and $a<b$ we define
	\begin{align*}
		\theta^n_X[a,b)=\sup_{s,t\in[a,b)}d_n\big(X(t),X(s)\big) \qquad X\in D(\R,\cS),
	\end{align*}
	and
	\begin{align}\label{mocn}
		\omega^n(X,t,\delta)=\inf\{\max_{1\leq i \leq m} \theta^n_X[t_{i-1},t_i)&:\exists m\geq 1,-t=t_0<t_1<...<t_m=t \\
		&\text{such that $t_i-t_{i-1}>\delta$ for all $i\leq m$}\}\nonumber.
	\end{align}
In words, the difference between $\omega^n$ and $\omega$ is that the former considers only the values of $X$ restricted to the interval $[-n,n]$. From \eqref{maprox} it follows that
\begin{align*}
	\omega(X,\mu^*,\delta)\leq \omega^n(X,\mu^*,\delta)+2^{-n}.
\end{align*}
Setting $n=\ceil{\log_2\epsilon^{-1}}+1$, from the last display it follows that in order to show \eqref{cond1} it is enough to show that
\begin{align}\label{lim4}
	\lim_{\delta \rightarrow 0} \limsup_{N\rightarrow \infty} \P\Big(\omega^n(G^N,\mu^*,\delta)>\frac{\epsilon}{2}\Big)=0.
\end{align}
Recall the event $\cA^{\delta,N,\mu^*,n}$ from \eqref{A}. Note that
\begin{align*}
	\Big(\cA^{2\delta,N,\mu^*,n}\Big)^c\subseteq \{\omega^n(G^N,\mu^*,\delta)=0\}.
\end{align*} 
From Proposition \ref{prop:ub2},  for $(32)^3\vee 64(\rho^*)^3<N$ and $\delta< 2^{-6}n^{-1/2}$
\begin{align*}
	\P\Big(\omega^n(G^N,\mu^*,\delta)>\frac{\epsilon}{2}\Big)\leq \P\big(\omega^n(G^N,\mu^*,\delta)>0\big)\leq \P\big(\cA^{2\delta,N,\mu^*,n}\big)\leq  2^{18}n\delta,
\end{align*} 
which implies \eqref{lim4}, which in turn implies that \eqref{cond1} indeed holds.
\end{proof}
Fix $x_0>0$ and define the sequence of functions $\{G^{x_0,N}\}_{N\in \N}$ in $D(\R,C([-x_0,x_0]))$ through
\begin{align}\label{Gx0}
	G^{x_0,N}_\mu(x)=G^{N}_\mu(x) \qquad \mu\in\R,x\in[-x_0,x_0]. 
\end{align}
In words, $G^{x_0,N}$ retains the information of $G^{N}$ restricted to the interval $[-x_0,x_0]$. As before, we call $\mu\in \R$ an epoch of $G^{x_0,N}$, if $G^{x_0,N}$ is non-constant on any open interval containing $\mu$. The next result says that the process $G^N$, when restricted to $[-x_0,x_0]$, attains finitely many values on $[-\mu_0,\mu_0]$.  
\begin{proposition}\label{prop:fe}
	Fix $x_0\geq1$, $\mu_0\geq1$. For $N,M\in \N$ , define
	\begin{align}\label{Bset2}
		\mathcal{B}^{M,N,\mu_0,x_0}&=\{\text{the process $G^{x_0,N}$ has no less than $M$ epochs in $[-\mu_0,\mu_0]$}\}.
	\end{align}
	Then, for  $M>1+2\mu_0x_0^{1/2}2^4$ and $N>(32)^3\vee 64\mu_0^3$
	\begin{align}\label{ub80}
		\P(\mathcal{B}^{M,N,\mu_0,x_0})\leq 2^{21}\frac{x_0\mu_0}{M-1} .
	\end{align}
	In particular, with probability one and for any $N\geq 1$, the process $G^{x_0,N}$ attains finitely many values on $[-\mu_0,\mu_0]$.
\end{proposition}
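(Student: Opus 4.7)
The plan is to reduce the event $\mathcal{B}^{M,N,\mu_0,x_0}$ to an instance of the event $\mathcal{A}^{\delta,N,\mu_0,x_0}$ from \eqref{A} and then invoke Proposition \ref{prop:ub2}. I would first observe that by the definitions \eqref{G} and \eqref{Gx0}, for $\mu\in(-N^{1/3},N^{1/3})$ the function $G^{x_0,N}_\mu$ is the rescaled piecewise-linear interpolation of the sequence $I^{1/2-\frac{\mu}{4}N^{-1/3}}$ restricted to the finite index set $\llbracket -x_0^N,x_0^N\rrbracket$. Hence $\mu$ is an epoch of $G^{x_0,N}$ (in that range) if and only if $\mu$ is an epoch of $F^{N,x_0}$ in the sense of Subsection \ref{subsec:sh1}. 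The hypothesis $N>64\mu_0^3$ guarantees $[-\mu_0,\mu_0]\subset(-N^{1/3},N^{1/3})$, so no epochs of $G^{x_0,N}$ lie in the region where $G^N$ has been frozen to a constant in \eqref{G}.

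A pigeonhole argument then does the rest. If $G^{x_0,N}$ has at least $M$ epochs $\mu_1<\cdots<\mu_M$ in $[-\mu_0,\mu_0]$, then two consecutive ones must satisfy $\mu_{k+1}-\mu_k\leq 2\mu_0/(M-1)=:\delta$. The assumption $M>1+2\mu_0 x_0^{1/2}2^4$ rearranges exactly to $\delta<2^{-4}x_0^{-1/2}$, which is the hypothesis on $\delta$ in Proposition \ref{prop:ub2}; the hypothesis on $N$ carries over verbatim with $\rho^*=\mu_0$. Thus
\begin{align*}
\mathcal{B}^{M,N,\mu_0,x_0}\subseteq \mathcal{A}^{\delta,N,\mu_0,x_0},
\end{align*}
and Proposition \ref{prop:ub2} yields $\P(\mathcal{B}^{M,N,\mu_0,x_0})\leq 2^{20}x_0\delta=2^{21}x_0\mu_0/(M-1)$, which is \eqref{ub80}.

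The almost-sure finiteness for every $N\geq 1$ splits into two cases. When $N>(32)^3\vee 64\mu_0^3$, the bound just proved gives $\P(\mathcal{B}^{M,N,\mu_0,x_0})\to 0$ as $M\to \infty$, so the number of epochs is a.s.\ finite by monotone convergence on the nested events $\mathcal{B}^{M,N,\mu_0,x_0}\downarrow \{G^{x_0,N}\text{ has infinitely many epochs in }[-\mu_0,\mu_0]\}$. For the remaining (finitely many) values of $N$ below the threshold, the index set $\llbracket -x_0 N^{2/3},x_0 N^{2/3}\rrbracket$ is finite, and for each $i$ in it the process $\mu\mapsto I^{1/2-\frac{\mu}{4}N^{-1/3}}_i$ is a monotone pure-jump process whose jump locations are the image, under an affine reparametrization, of the jumps of $\rho\mapsto B^\rho_{(i-1)e_1,ie_1}$; since this marginal is a Poisson process in $\rho\in(0,1)$ \cite{FS18}, it has a.s.\ finitely many jumps on any compact $\rho$-interval. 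Summing over finitely many sites completes the argument.

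The main obstacle, such as it is, lies entirely upstream in Proposition \ref{prop:ub2}; the argument above is little more than algebraic bookkeeping to confirm that the pigeonhole gap $2\mu_0/(M-1)$ falls in the range where the earlier bound applies. The genuine analytic content sits in the decoupling achieved by Proposition \ref{prop:J} via the melonization representation $\nc^{(n)}$.
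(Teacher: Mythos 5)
Your proof is correct and follows essentially the same route as the paper: identify the epochs of $G^{x_0,N}$ with those of $F^{N,x_0}$, use the pigeonhole bound $2\mu_0/(M-1)$ on consecutive epoch gaps to embed $\mathcal{B}^{M,N,\mu_0,x_0}$ into $\cA^{2\mu_0/(M-1),N,\mu_0,x_0}$, and apply Proposition \ref{prop:ub2}. The one place you go slightly beyond the paper is the ``in particular'' clause for $N$ below the threshold $(32)^3\vee 64\mu_0^3$: the paper leaves this implicit, whereas you supply a direct argument via the finiteness of the index set $\llbracket -x_0 N^{2/3},x_0 N^{2/3}\rrbracket$ and the Poisson-process structure of $\rho\mapsto B^\rho_{(i-1)e_1,ie_1}$ from \cite{FS18}, which is a legitimate and arguably cleaner way to close that case.
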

\begin{proof}
	From  definition \eqref{G}, it is clear, that for $\mu_0^3<N$ the epochs of $G^{x_0,N}$ are the same as that of $F^{N,x_0}$ from \eqref{F}. More precisely, for $M>0$, define the event
	\begin{align*}
		\mathcal{C}^{M,N,\mu_0,x_0}&=\{\text{the process $F^{N,x_0}$ has no less than $M$ epochs in $[-\mu_0,\mu_0]$}\},
	\end{align*}
	then for $\mu_0^3<N$
	\begin{align*}
		\mathcal{B}^{M,N,\mu_0,x_0}=\mathcal{C}^{M,N,\mu_0,x_0}.
	\end{align*}
	 This implies that 
	\begin{align*}
		\mathcal{B}^{M,N,\mu_0,x_0}=\mathcal{C}^{M,N,\mu_0,x_0}\subseteq \cA^{\frac{2\mu_0}{M-1},N,\mu_0,x_0}.
	\end{align*}
	To see why the last display holds, note that if there are $M\geq 2$ epochs of the process $F^{N,x_0}$ in an interval of size $2\mu_0$, there must be two epochs within distance of $\frac{2\mu_0}{M-1}$ from each other. From Proposition \ref{prop:ub2} it follows that for $M>1+2\mu_0x_0^{1/2}2^4$ and $N>(32)^3\vee 64\mu_0^3$
	\begin{align*}
		\P(\mathcal{B}^{M,N,\mu_0,x_0})=\P(\mathcal{B}^{\ceil{M},N,\mu_0,x_0})\leq 2^{20}x_0\frac{2\mu_0}{M-1},
	\end{align*}
	which implies the result.
\end{proof} 
\subsection{Finite dimensional distributions of the stationary horizon}\label{subsec:fdd}
The proof for the finite dimensional convergence to the SH here is different then the one that appeared in earlier versions of the paper and was communicated to the author by Evan Sorensen. The same proof was used in \cite{busani2022scaling} to show convergence of the TASEP speed process to the the SH in the finite dimensional sense. As the arguments in the following proof are soft, it results in a much shorter and arguably cleaner proof.  

For any continuous functions $f$ and $g$ on $\R$ such $f(0)=g(0)=0$ and 
\begin{equation}
	\sup_{-\infty<s\leq 0}[g(s)-f(s)]<\infty,
\end{equation}
we define the map
\begin{align}\label{cq}
	\cq(g,f)(t)=f(t)+\sup_{-\infty<s\leq t}[g(s)-f(s)]-\sup_{-\infty<s\leq 0}[g(s)-f(s)].
\end{align}
The map in \eqref{cq} is known as a Brownian queue (see \cite{o2001brownian}) and it satisfies the following important property; 
\begin{equation}
	\begin{aligned}
		&\text{If $B^1$ and $B^2$ are Brownian motions with diffusivity $\sigma>0$ and drift $\mu_1$ and $\mu_2$ respectively,} \\
			&\text{then $\cq(B^1,B^2)$ is again a bi-infinite Brownian motion with diffusivity $\sigma$ and drift $\mu_2$,} \\
			&\text{and that vanishes at the origin.}
	\end{aligned}
\end{equation}
 In Corollary \ref{cor:psi} we show that the map $\cq$ can be seen as a continuous version of the map $D$ acting on continuous functions rather than sequences, where the first argument can be thought of as the cumulative service times while the second as cumulative inter-arrival times.

Next we would like to find an analogue of $\md^{(k)}$ to elements in $\cS^k$ for some $k\in\N$. Define the map $\cq^k:\cS^k\rightarrow \cS^k$ as follows

	\begin{enumerate}
		\item $\cq^1(f_1)=f_1$
		\item $\cq^2(f_1,f_2)=[f_1,\cq(f_1,f_2)]$
		\item
		\begin{align}
			\cq^k(f_1,...,f_k)=[f_1,\cq\big(f_1,[\cq^{k-1}(f_2,...,f_k)]_1\big),...,\cq\big(f_1,[\cq^{k-1}(f_2,...,f_k)]_{k-1}\big)] \quad  k\geq 3\label{it}.
		\end{align}
		
	\end{enumerate}
It is helpful to compare the map $\cq^k$ to the map $\md^{(k)}$ form \eqref{md}, as well as the construction in \eqref{it} with that in \eqref{q}. For $1\leq i \leq k$ we denote the $i$'th element of $\cq^k$ by $\cq_i^k$. For $h\in\cS$, define $\bm{h}^k$ to be the element in $\cS^k$ whose every coordinate is the function $h$.  



For $\tau>0$ define the scaling operator $\cL^\tau:\cS\rightarrow \cS$ through 
\begin{align*}
	\cL^{\tau}(f)(t)=\tau^{-1/3}f(\tau^{2/3} t).
\end{align*}
If $\h{f}\in\cS^k$, we denote $\h{f}^\tau = \cL^\tau(\h{f})=(\cL^\tau(f_1),...,\cL^\tau(f_k))$. The following is the main result of this section.
%

\begin{proposition}\label{prop:conv}
	Let $\h{t}\in\R^k$ be an increasing sequence. For each $N\in\N$  let $\rhov^N$ be defined through
	\begin{align*}
		\rho^N_i=1/2-\frac{t_i}{4}N^{-1/3}.
	\end{align*} 
Let $\h{I}^N\sim \nu_{\rhov^N}$. Let $W^{\bar{t}}=(W^{t_1},W^{t_2},...,W^{t_k})$ be a vector of $k$ bi-infinite Brownian motions vanishing at the origin, with diffusivity 2 and such that $W^{t_i}$ has drift $t_i$. Then 
\begin{align}\label{ub64}
	\cL^{N}\big(\cm(\md^{(k)}(\h{I}^N-\bm{2}^k))\big)\Rightarrow \cq\big(W^{\bar{t}}\big),
\end{align}
where the convergence is with respect to the weak topology on $\cS^k$.
\end{proposition}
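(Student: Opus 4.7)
\medskip

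\noindent\textbf{Proof proposal.} Write $\h{J}^N=\h{I}^N-\bm{2}^k$ and $\hat{G}_N=N^{-1/3}\cL^{N^{2/3}}\cm(\md^{(k)}(\h{J}^N))$ (using $\md^{(k)}(\h{J}^N)=\md^{(k)}(\h{I}^N)-\bm{2}^k$ from Lemma \ref{lem:const}). The first step is to invoke Donsker's invariance principle: the coordinates $I^{i,N}$ are independent i.i.d.\ sequences with marginals $\mathrm{Exp}(\rho_i^N)$, so $I^{i,N}_j-2$ has mean $t_i N^{-1/3}+O(N^{-2/3})$ and variance $(\rho_i^N)^{-2}\to 4$. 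Joint convergence over $i$ (by independence) yields
\begin{align*}
V^N:=\Big(N^{-1/3}\cL^{N^{2/3}}\cm(I^{i,N}-\bm{2})\Big)_{i=1}^k\ \Longrightarrow\ V:=(W^{t_1},\ldots,W^{t_k})
\end{align*}
in $\cS^k$, where the $W^{t_i}$ are independent two-sided Brownian motions with diffusivity $2$, drifts $t_i$, and vanishing at the origin. By Skorohod representation, I will assume $V^N\to V$ almost surely. I define the candidate limit as $X_{\h{t}}:=\cq^k(V)$; because the differences $W^{t_i}-W^{t_j}$ for $i<j$ have negative drift $t_i-t_j<0$, the suprema and infima in the definition of $\cq$ are a.s.\ finite, so $X_{\h{t}}$ is a well-defined random element of $\cS^k$.

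The heart of the argument is the triangle inequality
\begin{align*}
d^{(k)}(\hat{G}_N,X_{\h{t}})\ \le\ d^{(k)}\!\big(\hat{G}_N,\cq^k(V^N)\big)\ +\ d^{(k)}\!\big(\cq^k(V^N),\cq^k(V)\big),
\end{align*}
both terms of which I will show to vanish in probability. For the first term I apply Lemma \ref{lem:aprox} to $\h{J}^N$ at window $n=mN^{2/3}$, and then scale by $N^{-1/3}\cL^{N^{2/3}}$; since $\cq^k$ is positively homogeneous and commutes with $\cL^{\tau}$ by \eqref{si} and Lemma \ref{lem:const2}, the scaled RHS becomes $N^{-1/3}$ times a supremum over $|j|\le 2^k mN^{2/3}$ of centred coordinates of $\h{J}^N$ and of $\md^{(i+1)}(\h{J}^N)$. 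By Theorem \ref{thm:FS2} these coordinates have sub-exponential marginals, so the supremum is $O(\log N)$ with high probability and the whole error is $o(1)$. The event $\cA^{\h{J}^N}_{mN^{2/3}}$ is controlled by Lemma \ref{lem:Ai}: with $\Delta_{\min}=\tfrac{1}{4}\min_i(t_{i+1}-t_i)N^{-1/3}$ and $\rho_1\approx 1/2$, the exponent evaluates to $\tfrac{c(\h{t})}{3}\,m$, so this event has probability $\ge 1-4k^2 e^{-cm/3}$, which can be made arbitrarily close to $1$ by taking $m$ large.

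For the second term I use Lemma \ref{lem:con2}: on the event $A_R^{V^N,V}$,
\begin{align*}
d^{(k)}\big(\cq^k(V^N),\cq^k(V)\big)\ \le\ 2^{k4^{k+3}R}\,d^{(k)}(V^N,V)+2^{-R}.
\end{align*}
Given $\varepsilon>0$, I pick $R$ so that $2^{-R}<\varepsilon/2$; then $d^{(k)}(V^N,V)\to 0$ a.s.\ kills the first summand. It remains to show $\P(A_R^{V^N,V})\to 1$ as $N\to\infty$ after $R$ is fixed. For the limiting component $A_R^V$ this follows because the argmaxes $P_{-a_j^R}$ of differences $W^{t_i}-\cq_i^{k-j}(\cdots)$ are a.s.\ finite (negative effective drifts of the component differences), so they lie in $[-2a_j^R,-a_j^R]$ with probability tending to $1$ as $R\to\infty$. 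For $A_R^{V^N}$ I invoke the estimate \eqref{ub57} with $\tau=N^{2/3}$: here $\Delta_{\min}\tau R=\Theta(N^{1/3}R)$, so the bound vanishes as $N\to\infty$ for any fixed $R$ large enough.

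The main obstacle is that the map $\cq^k$ is genuinely discontinuous on $\cS^k$, so the continuous mapping theorem cannot be invoked directly; the reason everything still works is that the "locality" events $\cA^{\h{J}^N}_{\cdot}$ and $A_R^{\cdot,\cdot}$ used to approximate $\cm\circ\md^{(k)}$ by $\cq^k\circ\cm$ and to push forward weak convergence through $\cq^k$, respectively, become typical at precisely matching scales ($m$ large, $R$ large) thanks to the negative drift of successive coordinate differences after centring by $\bm{2}$. Once these events are secured with probability $\ge 1-2\varepsilon$ uniformly in large $N$, the bound $d^{(k)}(\hat{G}_N,X_{\h{t}})<\varepsilon$ holds with probability $\ge 1-3\varepsilon$, yielding $\hat{G}_N\Rightarrow X_{\h{t}}$.
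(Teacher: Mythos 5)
Your argument is correct, but it takes a genuinely different route from the paper, and it is worth contrasting the two. The paper's proof of this proposition deliberately does \emph{not} identify the limit: it shows, via Proposition~\ref{prop:ub} together with the event bounds \eqref{ub58}--\eqref{ub61}, that the laws $\cZ^N$ of $Z^N=N^{-1/3}\cL^{N^{2/3}}\big(\cm(\md^{(k)}(\h{I}^N-\bm{2}^k))\big)$ form a Cauchy sequence in the Prohorov metric on $\cM(\cS^k)$, then invokes completeness of $\cM(\cS^k)$ to produce an abstract limit $\pi^c_{\h t}$. The identification $\pi^c_{\h t}\sim\cq^k(W^{\h t})$ is deferred to Proposition~\ref{prop:eqD}. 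You instead propose the candidate limit $X_{\h t}=\cq^k(W^{\h t})$ up front, Skorohod-couple $V^N\to W^{\h t}$, and close the estimate with the triangle inequality through $\cq^k(V^N)$. In effect you have merged Propositions~\ref{prop:conv} and~\ref{prop:eqD} into a single direct argument; the lemmas invoked are the same (Lemma~\ref{lem:aprox} and~\ref{lem:Ai} for the $\cm\md^{(k)}$-vs-$\cq^k\cm$ approximation, Lemma~\ref{lem:con2} and \eqref{ub57} for the quasi-continuity of $\cq^k$). The paper's Cauchy-sequence route buys separation of concerns — existence of a limit needs only events $A_R^{Q^{N_1},Q^{N_2}}$ involving two \emph{prelimit} objects, both covered directly by \eqref{ub57} — whereas your route is more economical and immediately yields the explicit description of the limit.

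One small imprecision: you write that $\P\big(A_R^{V^N,V}\big)\to 1$ as $N\to\infty$ for fixed large $R$. With $\tau=N^{2/3}$ and $\Delta_{\min}\asymp N^{-1/3}$, the first exponential in \eqref{ub57} has exponent $\Delta_{\min}^2\tau R\asymp R$, so it is $e^{-cR}$ and does \emph{not} vanish as $N\to\infty$ at fixed $R$ (only the second term, whose exponent is $\asymp N^{1/3}R$, does). The correct statement is that $\limsup_N\P\big((A_R^{V^N,V})^c\big)$ can be made arbitrarily small by taking $R$ large, which is precisely what you use in the concluding paragraph; so the slip does not damage the argument. A second point to be explicit about: when invoking the Skorohod representation, the coupling should be arranged for the vector $\h{I}^N$ (not just $V^N$), since $\hat G_N$ is a function of $\h{I}^N$; this is harmless because $V^N=N^{-1/3}\cL^{N^{2/3}}\cm(\h{I}^N-\bm{2}^k)$ determines $\h{I}^N$, but it deserves a word. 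Finally, the justification that $\cq^k(W^{\h t})$ is a.s.\ well defined rests on the fact (Remark~\ref{rem:cq}, iterated) that at each stage of the recursion \eqref{it}, $\cq_i^{k-j}(W^{t_{j+1}},\dots,W^{t_k})$ has the law of $W^{t_{j+i}}$ and is independent of $W^{t_j}$; your phrase ``negative effective drifts'' is the right idea but would benefit from being unfolded to this level.
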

\begin{proof}
	By Skorohord representation we may assume that  $\cm(\h{I}^N-\bm{2}^k)$ is coupled with $W^{\bar{t}}$ such that 
	\begin{equation}
		\cL^{N}\cm(\h{I}^N-\bm{2}^k)\Rightarrow W^{\bar{t}},
	\end{equation}
where convergence is uniformly on compacts and in probability. We now prove \eqref{ub64} by induction on $k$. From \eqref{q}
\begin{equation}\label{eq23}
	\cm(\md^{(k)}(\h{I}^N-\bm{2}^k))=\cm\Big(D\big(I_1^N,[\md^{(k-1)}(I^N_k,...,I^N_k)]_1\big),...,D\big(I_1^N,[\md^{(k-1)}(I^N_k,...,I^N_k)]_{k-1}\big)\Big).
\end{equation}
Now, from the induction hypothesis we know that each element of the vector  $\md^{(k-1)}(I^N_k,...,I^N_k)$ converges to a Brownian motion uniformly on compact sets and in probability. So, if \eqref{ub64} holds for $k=2$, we can apply it to each element of the vector that is on the right hand side of \eqref{eq23} to conclude the result. The proof of the base case $k=2$ can be found in Lemma \ref{lem:fdcon}.
	\end{proof}
%

\begin{lemma}\label{lem:fdcon}
	Let $\bm{x}^1,\bm{x}^2$ be two i.i.d.\ sequences such that $D(\bm{x}^1,\bm{x}^2)$ is well defined. Assume that the one dimensional distribution of $\bm{x}^1$ and $\bm{x}^1$ has an exponential tail i.e.\ there exist $C,c>0$ such that 
	\begin{equation}
		\P(|x^i_0|>t)\leq Ce^{-ct}, \qquad i\in\{1,2\}.
	\end{equation}
	 Assume that $\bm{x}^i$ is coupled to a Bm with drift $\mu_i$ for $i\in\{1,2\}$ such that 
	\begin{equation}
		\limsup_{N\rightarrow \infty} \P\Big(\sup_{x\in[-a,a],i\in\{1,2\}}\Big|\cL^{N}\cm[\bm{x}^i](x)-B_i(x)\Big|>\epsilon\Big)=0.
	\end{equation}
	Then for every $a>0$ and  $\epsilon>0$ 
	\begin{equation}\label{eq22}
		\limsup_{N\rightarrow \infty} \P\Big(\sup_{x\in[-a,a]}\Big|\cL^{N}\cm\big[D(I^1,I^2)\big](x)-\cq(B^1,B^2)(x)\Big|>\epsilon\Big)=0.
	\end{equation}
\end{lemma}
\begin{proof}
	 For $a,S>0$, consider the event $E_{N,a,S}$ where the following holds
	\begin{enumerate}
		\item 
		\begin{equation}
			\sup_{-\infty<s\leq -aN^{2/3}}N^{-1/3}\Big(\cm[\bm{x}^1](s)-\cm[\bm{x}^2](s)\Big)=\sup_{-SN^{2/3}<s\leq -aN^{2/3}}N^{-1/3}\Big(\cm[\bm{x}^1](s)-\cm[\bm{x}^2](s)\Big)
		\end{equation}
	\item 
	\begin{equation}
		\sup_{-\infty<s\leq -a}\Big(B^1(s)-B^2(s)\Big)=\sup_{-S<s\leq -a}\Big(B^1(s)-B^2(s)\Big)
	\end{equation}
	\item 
	\begin{equation}
		\sup_{x\in[-a,a]}\Big|\sup_{-SN^{2/3}<s\leq xN^{2/3}}N^{-1/3}\Big(\cm[\bm{x}^1](s)-\cm[\bm{x}^2](s)\Big)-\sup_{-S<s\leq x}\Big(B^1(s)-B^2(s)\Big)\Big|\leq \epsilon/2.
	\end{equation}
\item \label{ub85}
\begin{equation}
	N^{-1/3}4\sup_{i\in[-aN^{2/3},aN^{2/3}]} |x^1_i|+|x^2_i|<\epsilon/2.
\end{equation}
	\end{enumerate}
From the definition of $\cq$ and Corollary \ref{cor:psi} we see that the event in \eqref{eq22} is contained in $E_{N,a,S}^c$. From \cite[Lemma A.3]{busani2022scaling} with $\xi(N)=N^{2/3}$, $\phi(N)=N^{-1/3}$ and $R=1$ we see that the probability of the first three items above  go to zero as $N\rightarrow \infty$. Using a simple union bound to bound item \ref{ub85} we conclude that $\lim_{S\rightarrow \infty}\limsup_{N\rightarrow \infty}\P(E_{N,a,S})\rightarrow 0$ as $N\rightarrow \infty$.  
\end{proof}
\subsection{Putting everything together}\label{subsec:pet}
\begin{figure}[t]	
	\centering
	\begin{subfigure}{0.55\textwidth}
		\includegraphics[width=0.9 \textwidth]{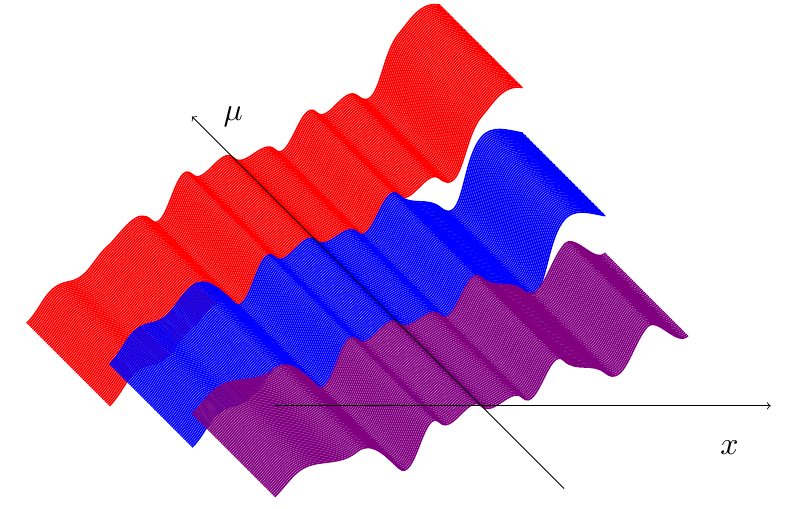} 
		\caption{}
		\label{sfig:SH}
	\end{subfigure}%
	\begin{subfigure}{0.55\textwidth}
		\includegraphics[width=0.9 \textwidth]{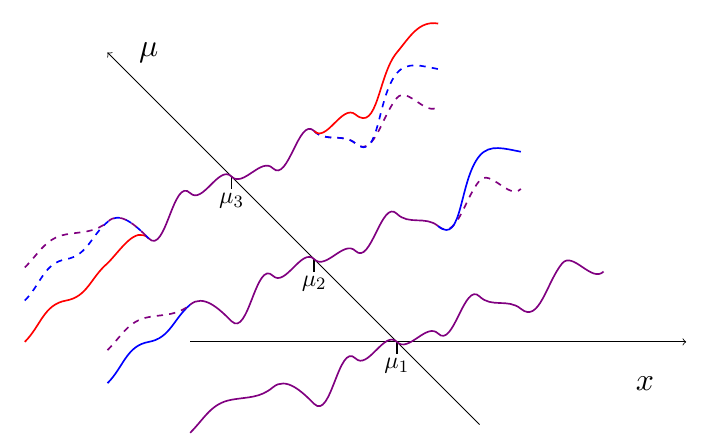}
		\caption{} 
		\label{sfig:SH2}
	\end{subfigure}
	
	\caption{Illustration of a typical realization of the Stationary Horizon. Figure \ref{sfig:SH}) A typical realization of the Stationary Horizon on a compact rectangle on the $x-\mu$ plane. The different colours in the picture represent different constant values (three in total in this picture) of the process along the $\mu$ axis. Figure \ref{sfig:SH2}) A finer depiction of the jump points of $G$. Recall the process $G^{x_0}$ from \eqref{Gx02}.  For $i\in \{1,2\}$, on the interval $[\mu_i,\mu_{i+1})$, $G^{x_0}$ attains the value $G^{x_0}_{\mu_i}\in C\big([-x_0,x_0]\big)$. Typically a change at $\mu_2$ will consist of a (random) compact interval around the origin (on the $x$-axis) where there is no change (solid purple curve), possible change  around  the  interval (solid curve in new colour), where  $G$ will take larger\textbackslash smaller values (on the $x$-axis). To facilitate the comparison between old and new values of $G$,  we keep the old values in dashed lines and in the same colour as when they were solid. The dynamics of the SH can therefore be described by a "sticky" line ensemble, branching  away as we zoom away from the origin or vary $\mu$.}
	\label{fig:SH}
\end{figure}
In this subsection we combine the different results obtained in the previous subsections to prove Theorem \ref{thm:sh} and Theorem \ref{thm:prop}.

\begin{proof}[Proof of Theorem \ref{thm:sh}] 
	The result will follow from Lemma \ref{lem:conv} once we verify all the conditions in its statement for the sequence $\{G^N\}_{N\in\N}$. Condition \ref{ss} holds from Lemma \ref{lem:cond1}. Condition \ref{cfd} holds from Proposition \ref{prop:conv} . Finally Condition \ref{sc} follows from Proposition \ref{prop:rc}.
\end{proof}
\begin{proof}[Proof of Theorem \ref{thm:prop}]
	Part \ref{odd}. This follows from the fact that the prelimiting random walk $\cm\big(I^{1/2-\frac\mu4N^{-1/3}}\big)$, under diffusive scaling, converges to a Brownian motion with drift $\mu$. Part \ref{QM} is comes from Proposition \ref{prop:conv}. Part  \ref{sci}. From \eqref{G}, it is not hard to verify that i) considering the parameter $N$ to take values in $\R$, we still have that $G^N \rightarrow G$, where $G$ is the stationary horizon. ii) For $c>0$ we have
	\begin{align}\label{eq18}
		G^N_\mu(x)=cG^{c^3N}_{c\mu}(c^{-2}x).
	\end{align}
	Sending $N\rightarrow \infty$ on both sides of \eqref{eq18} we obtain the result.
	Part  \ref{pc} follows from Lemma \ref{lem:cond1} and Lemma \ref{lem:cl}. Part \ref{jp}. Let $x_0,\mu_0>0$. Recall the processes $G^{x_0,N}\in D\big(\R,C([-x_0,x_0])\big)$ defined in \eqref{Gx0}. Clearly, 
	\begin{align}\label{conv2}
		G^{x_0,N}\Rightarrow G^{x_0} \qquad N\rightarrow \infty,
	\end{align}
	where convergence is in $D\big(\R,C([-x_0,x_0])\big)$ equipped with a metric similar to the one in \eqref{met} only with 
	\begin{align*}
		d(f,g)=\sup_{x\in[-x_0,x_0]} |f(x)-g(x)| \qquad f,g\in C\big([-x_0,x_0]\big).
	\end{align*}
	  Let
	 \begin{align*}
	 \mathcal{B}^{M,\mu_0,x_0}=\{\text{the process $G^{x_0}$ has no less then $M$ epochs in $[-\mu_0,\mu_0]$}\}.
	 \end{align*}	
	 Recall the event $\mathcal{B}^{M,N,\mu_0,x_0}$ from \eqref{Bset2}, on which the number of epochs of $G^{x_0,N}$ is at least $M$. Without loss of generality, we may assume that the convergence in \eqref{conv2} is an almost sure one. This implies that
	 \begin{align*}
	 	\P\big(\mathcal{B}^{M,\mu_0,x_0}\setminus \,\mathcal{B}^{M,N,2\mu_0,x_0}\big)\rightarrow 0,
	 \end{align*}
	 or that
	  \begin{align}\label{ub81}
	  	\P\big(\mathcal{B}^{M,\mu_0,x_0}\big)\leq \P\big(\mathcal{B}^{M,N,2\mu_0,x_0}\big)+\epsilon_N,
	  \end{align}
	  where $\epsilon_N\rightarrow 0$. Indeed, if the process $G^{x_0}$ has at least $M$ epochs in $[-\mu_0,\mu_0]$, then for  $N>N_0$ for some random $N_0$, the process $G^{x_0,N}$ must have at least $M$ epochs in the interval $[-2\mu_0,2\mu_0]$. Using the bound \eqref{ub80} in \eqref{ub81} and sending $N$ to infinity gives
	  \begin{align*}
	  	\P\big(\mathcal{B}^{M,\mu_0,x_0}\big)\leq 2^{22}\frac{x_0\mu_0}{M-2} \qquad M\geq 3,
	  \end{align*}
	  which is essentially \eqref{ub82}. In particular, with probability 1, the process $G^{x_0}$ has finitely many epochs on any compact interval. Part \ref{mono}. Let $x_1\leq x_2$ and $\mu_1 \leq  \mu_2$. Using \eqref{cocyc} and \eqref{mon} in \eqref{G} implies that for large enough $N$
	  \begin{align*}
	  	G^{N}_{\mu_2}(x_2)-G^{N}_{\mu_2}(x_1)\geq G^{N}_{\mu_1}(x_2)-G^{N}_{\mu_1}(x_1).
	  \end{align*}
	  This implies \eqref{mon2}.
\end{proof}
\appendix
\section{Appendix}
\subsection{Queues}
The following is a known result in queueing theory.
\begin{lemma}\label{lem:sourj}
	Fix $0<\rho_-<\rho_+$. Let $\arrv\sim \nu^{\rho_-}$ and $\servv\sim \nu^{\rho_+}$. Then
	\begin{align*}
		J_0:=S_{oj}(\servv,\arrv)_0=w_0+s_0,
	\end{align*}
	where $w_0$ and $s_0$ are independent and
	\begin{align*}
		&w_0\sim \mathrm{BerExp}(\rho_+-\rho_-,\rho_+)\\
		&s_0\sim \mathrm{Exp}(\rho_+).
	\end{align*}
	and therefore 
	\begin{align*}
		J_0\sim \mathrm{Exp}(\rho_+-\rho_-).
	\end{align*} 
\end{lemma}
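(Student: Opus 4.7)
My proof plan proceeds in four steps, following the classical M/M/1 analysis.

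First I would establish the independence claim directly from the formula in display \eqref{W}. Writing $w_0=\max\big(0,\sup_{j\le 0}\sum_{k=j}^0(s_{k-1}-a_k)\big)$, one sees that $w_0$ is a measurable function of $\{a_k:k\le 0\}$ and $\{s_k:k\le -1\}$ alone. Since $\arrv$ and $\servv$ are independent under $\nu^{\rho_-}\otimes\nu^{\rho_+}$ and the coordinates of $\servv$ are independent, $w_0$ is independent of $s_0$. The marginal $s_0\sim\mathrm{Exp}(\rho_+)$ is given by hypothesis.

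Next I would identify the law of $w_0$. Setting $n=-j$ and reversing time in the i.i.d.\ stream $\{(s_{k-1},a_k)\}_{k\le 0}$, one obtains the distributional equality
\begin{equation*}
w_0\stackrel{d}{=}\max\Big(0,\sup_{n\ge 1}\sum_{k=1}^n(s_k'-a_k')\Big),
\end{equation*}
where $(s_k',a_k')$ are i.i.d.\ copies with $s_k'\sim\mathrm{Exp}(\rho_+)$, $a_k'\sim\mathrm{Exp}(\rho_-)$; this is the running maximum of a random walk with negative drift $1/\rho_+-1/\rho_-$. To identify its distribution I would use Lindley's fixed point: $w_0$ satisfies $w_0\stackrel{d}{=}(w_0+s-a)^+$ with $w_0\perp(s,a)$, $s\sim\mathrm{Exp}(\rho_+)$, $a\sim\mathrm{Exp}(\rho_-)$. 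Plugging in the ansatz
\begin{equation*}
\mathbb{P}(w_0=0)=\tfrac{\rho_+-\rho_-}{\rho_+},\qquad \mathbb{P}(w_0>t)=\tfrac{\rho_-}{\rho_+}e^{-(\rho_+-\rho_-)t}\ \text{for }t>0,
\end{equation*}
and conditioning on $a$ (using memorylessness of $s$) yields the identity after a short computation; uniqueness of the stationary distribution for the Lindley chain (which holds because the drift is negative) closes the step. This exactly matches the $\mathrm{BerExp}(\rho_+-\rho_-,\rho_+)$ description.

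Finally, to get $J_0\sim\mathrm{Exp}(\rho_+-\rho_-)$, I would multiply moment generating functions using independence: for $\theta<\rho_+-\rho_-$,
\begin{equation*}
\mathbb{E}[e^{\theta w_0}]=\tfrac{(\rho_+-\rho_-)(\rho_+-\theta)}{\rho_+(\rho_+-\rho_--\theta)},\qquad \mathbb{E}[e^{\theta s_0}]=\tfrac{\rho_+}{\rho_+-\theta},
\end{equation*}
and the product telescopes to $(\rho_+-\rho_-)/(\rho_+-\rho_--\theta)$, the MGF of $\mathrm{Exp}(\rho_+-\rho_-)$.

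The only genuinely nontrivial step is the second one; however, since the result is classical (it is the stationary waiting time of the stable M/M/1 queue), I would expect the author either to cite a standard reference or to verify the ansatz as above. Steps 1, 3 (computing the sojourn MGF), and the final assembly are essentially one-line calculations.
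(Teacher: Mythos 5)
Your proof is correct. It is worth noting that the paper does not actually supply a proof of this lemma at all: it prefaces it only with ``The following is a known result in queueing theory'' and leaves the reader to supply or look up the argument, so there is no paper proof to compare against. Your derivation is therefore a self-contained filling-in of what the author treats as folklore, and each step checks out. The independence of $w_0$ and $s_0$ follows exactly as you say from the fact that $w_0=\sup_{j\le 0}\big(\sum_{k=j}^0(s_{k-1}-a_k)\big)^+$ is measurable with respect to $\sigma(a_k:k\le 0)\vee\sigma(s_k:k\le -1)$, which does not involve $s_0$. The identification of the law of $w_0$ is the classical M/M/1 stationary waiting time; your Lindley fixed-point check works and is not circular: the ansatz gives $w_0+s\sim\mathrm{Exp}(\rho_+-\rho_-)$ directly by the MGF product, and then $(w_0+s-a)^+$ is the positive part of the difference of two independent exponentials with rates $\rho_+-\rho_-$ and $\rho_-$, which by memorylessness gives back $\mathrm{BerExp}(\rho_+-\rho_-,\rho_+)$, and uniqueness of the stationary Lindley law under negative drift closes it. The MGF computation for $J_0=w_0+s_0$ telescopes exactly as you wrote. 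The only place you are slightly informal is the time-reversal re-indexing in your second step, but the change of variable $m=1-k$ makes the claimed distributional identity immediate, and you do not actually rely on that representation anyway since the fixed-point argument suffices.
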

The following lemma is a deterministic result from \cite[Lemma 4.4]{FS18}. 
\begin{lemma}
	Let $\h{I}\in \cY^3$. Then
	\begin{align}\label{qi3}
		D^{(3)}(\h{I})=D^{(3)}(\sigma_1\h{I}).
	\end{align}
\end{lemma}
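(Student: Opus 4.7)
The plan is as follows. Since $\h{I}\in \cY^3$, the means $\lim_{m\to\infty} m^{-1}\sum_{i=-m}^0 I^k_i$ are strictly increasing in $k$, so the densities $[\denp(\h{I})]_k$ are strictly decreasing; in particular $[\denp(\h{I})]_1 > [\denp(\h{I})]_2$, the operator $\sigma_1$ is active, and
\[
\sigma_1\h{I} = \bigl(D(I^1,I^2),\,R(I^1,I^2),\,I^3\bigr).
\]
Applying the iteration identity \eqref{qi} with $n=3$ and $k=1$ to both sides, the claim reduces to the deterministic queueing identity
\[
D\bigl(I^1, D(I^2, I^3)\bigr) \;=\; D\Bigl(D(I^1, I^2),\, D\bigl(R(I^1, I^2),\, I^3\bigr)\Bigr).
\]

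My main route will be a direct max-plus verification. Writing $F^j$ for the cumulative counting function associated to the increments $I^j$, the Lindley recursion gives $D(f,g)$ a sup-plus representation in which the cumulative departure function at level $n$ equals $F^f_n$ plus a supremum of $F^g_{k-1}-F^f_{k-1}$ over $k\le n$, regularised at $-\infty$ using \eqref{qc}. Expanding both sides of the reduced identity yields nested sup-plus expressions in $F^1,F^2,F^3$ over pairs of indices $k_1\le k_2\le n$. After reorganising the nested suprema (swapping the order of the outer and inner maximisers and using the identity $D(f,g)+R(f,g)=f+g$ to re-aggregate increments), both sides should collapse onto the same aggregate supremum. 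This would establish the identity coordinate-wise.

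In parallel, I would pursue a cleaner, more conceptual proof via the melonization construction of Subsection~\ref{subsec:mel}. Both $D^{(3)}(\h{I})$ and $D^{(3)}(\sigma_1\h{I})$ are the top line of the stationary melon of $(I^1, I^2, I^3)$, so the identity amounts to showing that the top line is invariant under a single active elementary swap $\sigma_1$. This would follow from an inductive argument on the number of $\sigma$-moves needed to sort the ensemble, using that the top of the melon is a local function of the stack just below it at each stage.

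The main obstacle is the bi-infinite boundary behaviour. In the cumulative approach one must handle the $\lim_{m\to -\infty}$ regularisation carefully via \eqref{qc} to make the sup-plus formulas rigorous for two-sided sequences. In the melonization approach one needs the top-line invariance under an active swap to be a genuine \emph{deterministic} statement (as opposed to the distributional identity built into Lemma~\ref{stat}), which is not directly available from the results developed earlier and would itself require a max-plus verification. For this reason I expect the direct max-plus computation, in the spirit of \cite[Lemma 4.4]{FS18}, to be the quickest route, even though the line-ensemble picture is conceptually more transparent.
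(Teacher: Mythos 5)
The paper offers no proof of this lemma: it is stated verbatim as a citation to \cite[Lemma~4.4]{FS18}, so there is no ``paper proof'' to compare against. Your reduction is correct and cleanly stated. Since $\h{I}\in\cY^3$ the densities are strictly decreasing, $\sigma_1$ is automatically active, and unfolding \eqref{q} (equivalently \eqref{qi} with $k=1$) on both sides turns the lemma into the deterministic queueing identity
\begin{align*}
D\bigl(I^1,D(I^2,I^3)\bigr)=D\Bigl(D(I^1,I^2),\,D\bigl(R(I^1,I^2),I^3\bigr)\Bigr).
\end{align*}
This is indeed the content of \cite[Lemma~4.4]{FS18}, and you correctly point out that the ``top of the melon is invariant under an active bottom swap'' picture is not a shortcut here: it is exactly the statement you would be trying to prove and would still require the deterministic computation.

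The gap is that the max-plus verification --- the actual content of the lemma --- is never carried out, and the description you give of how it would go does not quite match what happens. Both sides already have the same shape of sup over $k\le j\le i$, so there is no ``swapping of outer and inner maximisers'' to do. The real issue is that once you write cumulative departures as $F_i+\sup_{j\le i}(G_j-F_{j-1})$ plus constants, the right-hand side does not expand to a clean nested sup in $F^1,F^2,F^3$: the intermediate streams $D(I^1,I^2)$ and $R(I^1,I^2)$ inject the running maximum $\Phi_j:=\sup_{l\le j}(G_l-F_{l-1})$ into the formula, so the right side takes the form $F_i+\Phi_i+\sup_{k\le j\le i}\bigl(\phi_j+\psi_k-\Phi_j-\Phi_{j-1}+\Phi_{k-1}\bigr)$ (with $\phi_j=G_j-F_{j-1}$, $\psi_k=H_k-G_{k-1}$), while the left side is $F_i+\sup_{k\le j\le i}(\phi_j+\psi_k)$. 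Showing these differ by an $i$-independent constant is a genuine argument: one must track the competing argmaxes of $\phi$ and $\psi$ and check, case by case on the sign of the increment, that the $\Phi$ contributions cancel exactly. The conservation law $D(f,g)+R(f,g)=f+g$ alone is not enough. As you anticipate, the two-sided boundary also needs care via \eqref{qc} to make the suprema finite and the ``constant'' well-defined. None of this is prohibitive --- it is essentially what \cite{FS18} does --- but as written your proposal asserts the collapse without demonstrating it, and the actual mechanism (cancellation of running maxima) is different from what you describe.
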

The following lemma extends this result.
\begin{lemma}
	Fix $n\geq 3$. Let $\h{I}\in \cY^n$, then for $1 \leq i  \leq n-2$
	\begin{align}\label{si3}
		D^{(n)}(\h{I})=D^{(n)}(\sigma_i\h{I})
	\end{align}
\end{lemma}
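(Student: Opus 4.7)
The plan is to reduce the general case $1\leq i\leq n-2$ to the already established three-function identity \eqref{qi3} via two applications of the associativity relation \eqref{qi}.

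First I would handle the base case $i=1$. Here $\sigma_1\h{I}=(D(I^1,I^2),R(I^1,I^2),I^3,\dots,I^n)$. Using \eqref{qi} with $k=2$ on \emph{both} sides,
\begin{align*}
D^{(n)}(\h{I}) &= D^{(3)}\bigl(I^1,I^2,D^{(n-2)}(I^3,\dots,I^n)\bigr), \\
D^{(n)}(\sigma_1\h{I}) &= D^{(3)}\bigl(D(I^1,I^2),R(I^1,I^2),D^{(n-2)}(I^3,\dots,I^n)\bigr).
\end{align*}
Setting $J=D^{(n-2)}(I^3,\dots,I^n)$, the right-hand identity reads $D^{(3)}(\sigma_1(I^1,I^2,J))$, so the equality of the two displays follows from \eqref{qi3} applied to the three-sequence input $(I^1,I^2,J)$. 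One small check is needed here: that $(I^1,I^2,J)\in\cY^3$, which follows from the ordering of densities in $\cY^n$ and the fact that, by Lemma \ref{lem:int}-type considerations (or directly, since $D^{(n-2)}$ preserves the bottom density), $J$ has density $\rho_n$ strictly smaller than the densities of $I^2$ and $I^1$.

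Next I would handle $2\leq i\leq n-2$ by peeling off the first $i-1$ coordinates. Since $\sigma_i$ leaves the first $i-1$ entries untouched and only modifies entries $i$ and $i+1$, applying \eqref{qi} with $k=i-1$ to both $D^{(n)}(\h{I})$ and $D^{(n)}(\sigma_i\h{I})$ yields
\begin{align*}
D^{(n)}(\h{I}) &= D^{(i)}\bigl(I^1,\dots,I^{i-1},D^{(n-i+1)}(I^i,\dots,I^n)\bigr), \\
D^{(n)}(\sigma_i\h{I}) &= D^{(i)}\bigl(I^1,\dots,I^{i-1},D^{(n-i+1)}(\sigma_1(I^i,\dots,I^n))\bigr).
\end{align*}
Hence it suffices to prove $D^{(n-i+1)}(I^i,\dots,I^n)=D^{(n-i+1)}(\sigma_1(I^i,\dots,I^n))$, which is exactly the base case $i=1$ applied to the consecutive sub-vector $(I^i,\dots,I^n)$. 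Since consecutive sub-vectors of elements of $\cY^n$ lie in the corresponding $\cY$ space, this application is legitimate and the proof is complete.

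I do not anticipate a serious obstacle: once the $i=1$ case is clarified by the $k=2$ instance of \eqref{qi} (which recasts the problem as a three-sequence invariance), the general $i$ reduces to it in one line using the $k=i-1$ instance. The only delicate bookkeeping is verifying membership in the various $\cY$ spaces so that \eqref{qi3} and \eqref{qi} can be invoked.
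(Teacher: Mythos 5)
Your proof is correct and follows essentially the same route as the paper: both reduce the general statement to the three-sequence invariance \eqref{qi3} by way of the associativity identity \eqref{qi}. The only organizational difference is that you perform the reduction in two stages (first peel off $I^1,\dots,I^{i-1}$ to reach the $i=1$ case, then peel off the tail to reach a three-sequence statement), whereas the paper peels off the outer prefix and the tail in a single combined application, but the underlying idea is identical.
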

\begin{proof}
	First note that as $1\leq i \leq n-2$ and $\h{I}\in\cY^n$, it follows that  $[\denp(\sigma_i\h{I})]_j>[\denp(\h{I})]_n$ for every $1 \leq j<n$ so that the right hand side of \eqref{si3} is well defined. By definition
	\begin{align}\label{qink2}
		D^{(n)}(I^1,...,I^n)=D^{(i)}\Bigg(I^1,...,I^{i-1},D\Big(I^i,D\big(I^{i+1},D^{(n-(i+2)+1)}(I^{i+2},...,I^n)\big)\Big)\Bigg)
	\end{align}
	From \eqref{qi3}
	\begin{align}\label{qink}
		&D\Big(I^i,D\big(I^{i+1},D^{(n-(i+2)+1)}(I^{i+2},...,I^n)\big)\Big)\\
		&=D\Big(D(I^i,I^{i+1}),D\big(R(I^i,I^{i+1}),D^{(n-(i+2)+1)}(I^{i+2},...,I^n)\big)\Big)\nonumber
	\end{align}
	Plugging \eqref{qink} in \eqref{qink2}
	\begin{align*}
	D^{(n)}(I^1,...,I^n)&=D^{(i)}\Bigg(I^1,...,I^{i-1},D\Big(D(I^i,I^{i+1}),D\big(R(I^i,I^{i+1}),D^{(n-(i+2))}(I^{i+2},...,I^n)\big)\Big)\Bigg)\\
	&\stackrel{\eqref{qi}}{=}D^{(n)}(I^1,...,I^{i-1},D(I^{i+1},I^i),R(I^{i+1},I^i),I^{i+2},...,I^n)=D^{(n)}(\sigma_i\h{I}).
	\end{align*}
\end{proof}
\begin{lemma}[{\cite[Lemma B.2]{FS18}}]\label{lem:ind}
	Let $\rho_1>\rho_2>0$. Let $I^1$ and $I^2$ be two independent i.i.d. sequences with $I^i_0\sim \rm{Exp}(\rho_i)$.
	\begin{enumerate}[label=(\roman*)]
		\item \label{ind1}For each $k\in \Z$ the random variables
		\begin{align*}
			\{D(I^1,I^2)_i\}_{i\leq k},\{R(I^1,I^2)_i\}_{i\leq k},S_{oj}(I^1,I^2)_k
		\end{align*}
		are mutually independent with marginals
		\begin{align*}
			D(I^1,I^2)_0\sim \text{Exp}(\rho_2) \quad R(I^1,I^2)_0\sim \text{Exp}(\rho_1) \quad S_{oj}(I^1,I^2)_0\sim \text{Exp}\big(\rho_1-\rho_2\big)
		\end{align*}
		\item \label{ind2}$D(I^1,I^2)$ and $R(I^1,I^2)$ are independent.
	\end{enumerate}
\end{lemma}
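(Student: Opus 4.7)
Lemma \ref{lem:ind} is the classical \emph{Burke output theorem} --- together with its symmetric counterpart for the idle/``lost'' process --- for a stationary single-server queue with service times $\servv = I^1 \sim \mathrm{Exp}(\rho_1)$ and inter-arrivals $\arrv = I^2 \sim \mathrm{Exp}(\rho_2)$ under the stability condition $\rho_1 > \rho_2$. The plan is to set up the stationary queue dynamics, identify the correct reversibility, and then read off the joint independence from the time-reversed chain. I would proceed in three steps.

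\emph{Step 1 (marginals).} First establish that the waiting-time recursion $w_i = (w_{i-1}+s_{i-1}-a_i)^+$ admits a unique stationary law $w_0 \sim \mathrm{BerExp}(\rho_1-\rho_2,\rho_1)$ (this is the content of Lemma \ref{lem:sourj}), which gives $S_{oj}(\servv,\arrv)_0 = w_0+s_0 \sim \mathrm{Exp}(\rho_1-\rho_2)$. The departure marginal $D(I^1,I^2)_0 \sim \mathrm{Exp}(\rho_2)$ then follows by splitting on whether the server is busy when customer $0$ arrives: in the busy case $d_0 = s_0 \sim \mathrm{Exp}(\rho_1)$, in the idle case the memoryless arrival clock contributes an $\mathrm{Exp}(\rho_2)$ piece, and a short mixture computation combines them into the claimed $\mathrm{Exp}(\rho_2)$ law. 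The marginal $R(I^1,I^2)_0 \sim \mathrm{Exp}(\rho_1)$ follows from the identity $r_0 = a_0\wedge t_{-1}$ using that $a_0 \sim \mathrm{Exp}(\rho_2)$ is independent of $t_{-1}\sim \mathrm{Exp}(\rho_1-\rho_2)$ (obtained by applying Step 1 at index $-1$ and noting that $a_0$ is independent of the past at time $-1$).

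\emph{Step 2 (joint independence in (i) via reversibility).} Embed the queue into the Markov chain $X_i = (q_i,s_i,a_{i+1})$, where $q_i$ denotes the queue length just after the $i$-th arrival. Verify that under the stationary measure $X$ is reversible in the sense that its time reversal has the same functional form with the pair $(\servv,\arrv)$ replaced by $\bigl(R(\servv,\arrv),D(\servv,\arrv)\bigr)$. This reversal casts the forward-time past departures $\{D(I^1,I^2)_i\}_{i\le k}$ as the future inter-arrival sequence of the reversed chain, hence i.i.d.\ $\mathrm{Exp}(\rho_2)$ and, by the Markov property applied in reversed time, independent of the current state of the reversed chain at time $k$. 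That state encodes precisely $S_{oj}(I^1,I^2)_k$ together with the sequence $\{R(I^1,I^2)_i\}_{i\le k}$ (the latter becoming the reversed chain's ``past services,'' i.i.d.\ $\mathrm{Exp}(\rho_1)$). Reading off the three-way independence from the reversed chain proves part (i). Part (ii) is then obtained by letting $k\to\infty$ and applying Kolmogorov's consistency theorem to lift the finite-dimensional independence of $\{D(I^1,I^2)_i\}_{i\le k}$ and $\{R(I^1,I^2)_i\}_{i\le k}$ to independence of the full sequences.

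The main obstacle is the reversibility claim in Step 2: one has to simultaneously track the continuous variables $(s_i,a_{i+1})$ and the discrete queue length $q_i$, and check that the time reversal exchanges departures with arrivals at precisely the correct rates. The cleanest execution, which I would follow, is to pass to the underlying continuous-time $M/M/1$ queue-length process --- a birth--death chain that is manifestly reversible under its geometric stationary law --- and then recover the customer-indexed independence statement by a change of variables along the embedded jump chain. This is essentially the route of \cite[Lemma B.2]{FS18}, which the present lemma directly cites.
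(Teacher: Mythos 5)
The paper does not prove this lemma; it is quoted verbatim from \cite[Lemma B.2]{FS18}, so there is no in-paper argument to compare against. Your proposal is the classical Burke's-theorem route for a stationary two-sided $M/M/1$ queue, and it is correct in outline. The marginal computations in Step~1 check out: the Laplace-transform mixture computation yields $D(I^1,I^2)_0\sim\mathrm{Exp}(\rho_2)$, and $R(I^1,I^2)_0=a_0\wedge t_{-1}$ with $a_0\sim\mathrm{Exp}(\rho_2)$ independent of $t_{-1}\sim\mathrm{Exp}(\rho_1-\rho_2)$ is $\mathrm{Exp}(\rho_1)$.

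A few places need tightening. The chain $X_i=(q_i,s_i,a_{i+1})$ carries a one-step lookahead and is not a convenient object to reverse directly; your fallback to the reversible continuous-time $M/M/1$ queue-length process is the right move. But the statement that the reversed chain's state at time $k$ ``encodes $S_{oj}(I^1,I^2)_k$ together with the sequence $\{R(I^1,I^2)_i\}_{i\le k}$'' is imprecise: the state is finite-dimensional and cannot encode an infinite sequence. The correct bookkeeping is: after reversal, $\{D_i\}_{i\le k}$ and $\{R_i\}_{i\le k}$ become, respectively, the inter-arrival and service sequences of the reversed (stationary) queue at reversed times $\geq -k$; these are jointly i.i.d.\ with the claimed marginals and are independent of the reversed chain's past; and $J_k$ is measurable with respect to that past. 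That is what delivers the three-way independence. Finally, for Part~\ref{ind2} the relevant tool to pass from independence of the cylinder $\sigma$-algebras $\sigma(\{D_i\}_{i\le k})$ and $\sigma(\{R_i\}_{i\le k})$ to independence of the full sequences is the $\pi$-$\lambda$ (monotone class) theorem, not Kolmogorov's consistency theorem, which constructs measures from compatible marginals but does not lift independence.
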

\begin{lemma}\label{lem:se}
	Let $\servv,\arrv\in \qs$ and define $S_{oj}(\servv,\arrv)=\{J_i\}_{i\in\Z}$. For every integers $m\leq n$
	\begin{align}\label{se}
		\sum_{i=m}^n(J_{i-1}-a_i)^-=\Big(\inf_{m\leq i \leq n-1}J_{m-1}+\sum_{j=m}^{i-1}(s_j-a_j)+a_i\Big)^-
	\end{align}
\begin{proof}
	This follows from \cite[Lemma A.1.]{BBS20}.
\end{proof}
\end{lemma}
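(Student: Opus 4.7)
The statement is a deterministic pathwise identity relating cumulative idle time to an infimum of the free (unreflected) walk associated with the queueing recursion, and the paper itself flags that it follows from \cite[Lemma A.1]{BBS20}. The plan is therefore to give a short self-contained derivation from the Lindley recursion and then match it to the target expression; no probability is needed.

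First, I would set $X_i := J_{i-1}-a_i = w_{i-1}+s_{i-1}-a_i$ so that the Lindley recursion $w_i = (w_{i-1}+s_{i-1}-a_i)^+$ reads $w_i = X_i^+$. From $X_i = X_i^+ - X_i^-$ one obtains the key telescoping relation
\[
(J_{i-1}-a_i)^- \;=\; X_i^- \;=\; w_i - w_{i-1} - s_{i-1} + a_i.
\]
Summing from $i=m$ to $n$ collapses to
\[
\sum_{i=m}^n (J_{i-1}-a_i)^- \;=\; w_n - w_{m-1} + \sum_{i=m}^n (a_i - s_{i-1}).
\]

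Second, I would iterate the Lindley recursion itself to obtain the standard ``Reich/Loynes'' representation
\[
w_n \;=\; \max\!\Bigl\{\,w_{m-1}+\sum_{k=m}^{n}(s_{k-1}-a_k),\ \max_{m\le j\le n}\sum_{k=j+1}^{n}(s_{k-1}-a_k),\ 0\Bigr\},
\]
derived by unrolling $(x^++y)^+ = \max(x+y,y,0)$ one step at a time. Plugging this into the telescoped expression, the term $w_{m-1}+\sum_{k=m}^n(s_{k-1}-a_k)$ exactly cancels the $-w_{m-1}+\sum (a_i-s_{i-1})$ piece, so that
\[
\sum_{i=m}^n (J_{i-1}-a_i)^- \;=\; \max\!\Bigl\{0,\ \max_{m\le j\le n}\bigl[\sum_{k=j+1}^{n}(s_{k-1}-a_k)-w_{m-1}-\sum_{k=m}^{n}(s_{k-1}-a_k)\bigr]\Bigr\}.
\]
A direct rearrangement of the bracket, using $w_{m-1}+s_{m-1}=J_{m-1}$, turns it into $-\bigl[J_{m-1}+\sum_{k=m}^{j-1}(s_k-a_k)-a_j\bigr]$, which gives
\[
\sum_{i=m}^n (J_{i-1}-a_i)^- \;=\; \Bigl[\inf_{m\le j\le n}\Bigl(J_{m-1}+\sum_{k=m}^{j-1}(s_k-a_k)-a_j\Bigr)\Bigr]^-.
\]
This is exactly the claimed identity (reading the statement with $-a_i$ in the inf and range $m\le i\le n$; the paper's reference to \cite[Lemma A.1]{BBS20} then closes the argument).

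There is no real obstacle here beyond bookkeeping of indices: the only subtlety is the careful decomposition $X_i = X_i^+-X_i^-$ together with $w_i=X_i^+$, which allows the sum of idle times to telescope, and matching conventions between $Y_j := J_{m-1}+\sum_{k=m}^{j-1}(s_k-a_k)-a_j$ and the shifted form appearing in the statement.
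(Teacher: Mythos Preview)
Your argument is correct and is the standard way to derive this identity; the paper simply cites \cite[Lemma~A.1]{BBS20} without giving details, so your self-contained proof actually says more than the paper does. You have also correctly spotted that the displayed statement contains two typos: the term should be $-a_i$ rather than $+a_i$, and the infimum should run over $m\le i\le n$ rather than $m\le i\le n-1$ (your derivation gives the right expression, and the paper's own use of the identity in Subsection~\ref{subsec:mel} indeed has $-a_i$).
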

\subsubsection*{Comparing stationary queues to empty ones}\label{subsec:CSE}
Here we show that the output of a stationary queue can be controlled by that of a queue starting with zero waiting time. This tool is an important ingredient in the proof of Lemma \ref{lem:Fb}.

Let $m\in \Z$. Define $D_{m}:\qs\times \qs\rightarrow (\qs)^{[m,\infty)}$ as the restriction of the queueing map on $[m,\infty)$ i.e.
\begin{align*}
	D_m(I^1,I^2)_i=D(I^1,I^2)_i \quad for\quad  m\leq i <\infty.
\end{align*}
Recall \eqref{summ}--\eqref{psi} and note that
\begin{align}\label{sd}
	&S^{m,i}(D(I^1,I^2))=S^{m,i}(I^1)+\psi^m(S_{oj}(I^1,I^2)_{m-1},I^1,I^2)_i\\
	&\leq S^{m,i}(I^1)+\psi^m(0,I^1,I^2)_i\leq S^{m,i}(I^1)+\sup_{m\leq j\leq i}[S^{m,i}(I^2-I^1)]^++\sup_{m\leq j \leq i}|I^2_j|.\nonumber
\end{align}
Next define the map $D_{m,0}:\qs\times \qs\rightarrow \qs^{[m,\infty)}$ in the following way. First define the sequence
\begin{align*}
	J'_{m-1}&=0\\
	J'_i&=(a_i-J'_{i-1})^-+s_i \quad i\geq m,
\end{align*}
and then define
\begin{align}\label{qm}
	D_{m,0}(\servv,\arrv)_i=(J'_{i-1}-a_i)^-+s_i \quad i\geq m.
\end{align}
The map $D_{m,0}$ can also be obtained by setting its input prior to time $m$ to zero. That is, 
\begin{align*}
	D_{m,0}(\servv,\arrv)=D_{m}(\servv',\arrv')
\end{align*}
where
\begin{align*}
	\arrv'_i=
	\begin{cases}
		\arrv_i & i\geq m\\
		0 & i<m.
	\end{cases}
\end{align*} 
and where $\servv'$ is defined similarly.
\begin{lemma}\label{lem:qz}
	For every $m\in\Z$ the following holds
	\begin{enumerate}
		\item \label{od}	
		\begin{align*}
			D(\servv,\arrv)_i\leq D_{m,0}(\servv,\arrv)_i \quad i\geq m
		\end{align*}
		\item \label{od2}
		The map $S^{m,i}\big(D_{m,0}(\cdot,\cdot)\big)$ is increasing in both variables, i.e.
		\begin{align*}
			S^{m,i}\big(D_{m,0}(\servv^1,\arrv^1)\big)\leq S^{m,i}\big(D_{m,0}(\servv^2,\arrv^2)\big)
		\end{align*}
	whenever
	\begin{align*}
		(\servv^1,\arrv^1)\leq (\servv^2,\arrv^2).
	\end{align*}
	\end{enumerate}
\end{lemma}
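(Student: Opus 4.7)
The plan is to exploit the fact that the auxiliary sequence $J'_i$ defined in \eqref{qm} satisfies exactly the same Lindley-type recursion as the stationary sojourn time $t_i = w_i + s_i$, the only difference being the boundary condition: $J'_{m-1}=0$ (empty queue started at time $m-1$) versus $t_{m-1}\ge 0$ (whatever random waiting time the stationary queue has inherited from $(-\infty, m-1]$). Intuitively, starting with a smaller sojourn time produces more server idleness and hence larger inter-departure times, while the cumulative departure time of a queue started empty depends monotonically on every arrival and service input.

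For part \ref{od}, I would first check that both sequences satisfy the recursion
\[
x_i = (x_{i-1}-a_i)^+ + s_i = (a_i - x_{i-1})^- + s_i, \qquad i \ge m,
\]
with $x = t$ for the stationary queue (this uses the standard identity $w_i=(w_{i-1}+s_{i-1}-a_i)^+$, which combines with $t_i=w_i+s_i$ to give the above; note $w_{i-1}+s_{i-1}=t_{i-1}$) and $x=J'$ for the empty-start one by definition. Then I would proceed by induction on $i \ge m-1$ to show $t_i \ge J'_i$. The base case $t_{m-1} \ge 0 = J'_{m-1}$ is immediate since $t_{m-1}=w_{m-1}+s_{m-1}\ge 0$. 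For the induction step, the map $x \mapsto (x-a)^+ + s$ is non-decreasing, so the inequality is preserved. Finally, since $x \mapsto (a-x)^-$ is non-increasing in $x$,
\[
D(\servv,\arrv)_i = (t_{i-1}-a_i)^- + s_i = (a_i - t_{i-1})^- + s_i \le (a_i - J'_{i-1})^- + s_i = D_{m,0}(\servv,\arrv)_i,
\]
which is exactly \ref{od}.

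For part \ref{od2}, the plan is to establish the classical max-plus (Burke--Reich) representation
\[
S^{m,i}\big(D_{m,0}(\servv,\arrv)\big) = \max_{m \le k \le i}\Big(\sum_{j=m}^{k} a_j + \sum_{j=k}^{i} s_j\Big),
\]
by telescoping the recursion $J'_i = (J'_{i-1}-a_i)^+ + s_i$ with $J'_{m-1}=0$ to obtain the closed form
\[
J'_i = \max_{m-1 \le k \le i}\Big(\sum_{j=k+1}^{i} s_j - \sum_{j=k+1}^{i} a_j\Big),
\]
and then summing $D_{m,0}(\servv,\arrv)_j$ from $j=m$ to $j=i$ (one can also verify the formula directly by induction on $i$, matching with the two-term base case as I did as a sanity check). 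The right-hand side of the displayed identity is a pointwise maximum of finitely many linear functions of $(\servv,\arrv)$ with non-negative coefficients, so it is non-decreasing in every coordinate of $\servv$ and $\arrv$, giving \ref{od2}.

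I do not anticipate any serious obstacle: both parts are standard facts about the Lindley recursion, and the only care needed is notational bookkeeping with the conventions $(\cdot)^+=\max(\cdot,0)$ and $(\cdot)^-=\max(-\cdot,0)$ used in \eqref{d1}--\eqref{er}, together with the identification $t_i = w_i + s_i$ that turns the waiting-time recursion into a sojourn-time recursion comparable with \eqref{qm}.
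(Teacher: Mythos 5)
\textbf{Part \ref{od}:} Your plan is essentially the paper's proof: both set up the sojourn-time (Lindley) recursion, identify $J_{m-1}=S_{oj}(\servv,\arrv)_{m-1}\geq 0=J'_{m-1}$, propagate the inequality forward by induction via the monotone map $x\mapsto (x-a_i)^++s_i$, and conclude with the monotonicity of $x\mapsto(x-a_i)^-$.

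\textbf{Part \ref{od2}:} Here you take a genuinely different route. The paper reuses the $\psi$-decomposition \eqref{sd}, $S^{m,i}(D_{m,0}(\servv,\arrv))=S^{m,i}(\servv)+\psi^m(0,\servv,\arrv)_i$, and bounds the change in $\psi$ under an increase of $\servv$ or $\arrv$ directly. You instead propose to derive the explicit Burke--Reich/LPP max-plus representation and then read off monotonicity from the fact that it is a pointwise maximum of linear functionals with coefficients in $\{0,1\}$. Your route is arguably cleaner and yields a useful closed form; the paper's route avoids deriving the formula by leaning on the already-available $\psi$ machinery.

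Two corrections are needed, though neither is fatal. First, the intermediate closed form you state for $J'_i$ is wrong: $V_i:=J'_i-s_i$ satisfies the Lindley recursion $V_i=(V_{i-1}+s_{i-1}-a_i)^+$, so the telescoped increments are $s_{j-1}-a_j$, not $s_j-a_j$; your formula would give $J'_m=\max(s_m-a_m,0)$, whereas the recursion gives $J'_m=(-a_m)^++s_m$, which equals $s_m$ (not $s_m-a_m$) when $a_m\geq 0$. You can bypass this entirely by verifying the departure-time recursion $T_i=\max(T_{i-1},A_i)+s_i$ with $T_{m-1}=0$, $A_k=\sum_{j=m}^{k}a_j$, which you hint at. Second, the stated formula $S^{m,i}(D_{m,0}(\servv,\arrv))=\max_{m\le k\le i}\bigl(\sum_{j=m}^{k}a_j+\sum_{j=k}^{i}s_j\bigr)$ drops the $k=m-1$ option and is correct only when $a_m\geq 0$; since the paper defines $D_{m,0}$ on $\qs=\R^{\Z}$ (not $\qs_+$), the general formula must include the extra candidate $\sum_{j=m}^{i}s_j$ (indeed $S^{m,m}(D_{m,0}(\servv,\arrv))=\max(a_m,0)+s_m$, not $a_m+s_m$). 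Including it costs nothing for your monotonicity conclusion, as it is again a linear functional with nonnegative coefficients. In the paper's applications the inputs are nonnegative, so your stated formula does suffice there, but as written the lemma is stated for general real sequences.
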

\begin{proof}
	The proof of \ref{od}. follows by induction on $i\geq m$. Let $J_{m-1}=S_{oj}(\servv,\arrv)_{m-1}$ and construct $J_i$ via the map
	\begin{align*}
		J_i=(a_i-J_{i-1})^-+s_i \quad i\geq m.
	\end{align*}
Using induction on $i$ we see that
\begin{align*}
	J'_i\leq J_i \quad \text{for all }i\geq m.
\end{align*}
This implies that
\begin{align*}
	D(\servv,\arrv)_i=(J_i-a_i)^-+s_i\leq (J'_i-a_i)^-+s_i \leq  D_{m,0}(\servv,\arrv)_i \quad i\geq m.
\end{align*}
Moving on to the  proof of \ref{od2}, let $\servv'\geq 0$ 
\begin{align}\label{ub70}
		&S^{m,i}\big(D_{m,0}(\servv^1+\servv',\arrv^1)\big)-	S^{m,i}\big(D_{m,0}(\servv^1,\arrv^1)\big)\\
		&= S^{m,i}(\servv')+\psi(0,\servv^1+\servv',\arrv^1)_i-\psi(0,\servv^1,\arrv^1)_i\nonumber.
\end{align}
As 
\begin{align*}
	\psi(0,\servv^1+\servv',\arrv^1)_i-\psi(0,\servv^1,\arrv^1)_i\geq -S^{m,i}(\servv'),
\end{align*}
it follows that the left hand side of \eqref{ub70} is non negative which shows that $S^{m,i}\big(D_{m,0}(\cdot,\cdot)\big)$ is increasing in its first element. Similarly for $\arrv'\geq0$, clearly,
\begin{align*}
		&S^{m,i}\big(D_{m,0}(\servv^1,\arrv^1+\arrv')\big)-	S^{m,i}\big(D_{m,0}(\servv^1,\arrv^1)\big)\\
		&=\psi(0,\servv^1,\arrv^1+\arrv')-\psi(0,\servv^1,\arrv^1)\geq 0,
\end{align*}
so $S^{m,i}\big(D_{m,0}(\cdot,\cdot)\big)$ is increasing in its second element as well, thus proving the claim.
\end{proof}
The map $D_{m,0}$ can be generalized to a map on $k$ variables in the following way. Let $k\geq 2$ and let $I^1,..,I^k$ be sequences. Define another set of sequences $\hat{I}^1,...,\hat{I}^k$ by
\begin{align*}
	\hat{I}^i_l=
	\begin{cases}
		I^i_l & l\geq m \quad \quad i\in \{1,..,k\}\\
		0 & l<m.
	\end{cases}
\end{align*} 
Define the map $D^{(k)}_{m,0}$ through
\begin{align*}
	[D^{(k)}_{m,0}(I^1,...,I^k)]_i=D^{(k)}(\hat{I}^1,...,\hat{I}^k)_i \quad \text{for $i\geq m$}.
\end{align*}
Analogous to Lemma \ref{lem:qz} we have the following.
\begin{lemma}\label{lem:qz2}
	For every $m\in\Z$ the following holds
	\begin{enumerate}
		\item \label{od3}	
		\begin{align}\label{ode3}
			D^{(k)}(I^1,...,I^k)_i\leq D^{(k)}_{m,0}(I^1,...,I^k)_i \quad i\geq m
		\end{align}
		\item \label{od4}
		The map $S^{m,i}\big(D_{m,0}(\cdot,\cdot)\big)$ is increasing in all its variables, i.e.
		\begin{align}\label{ode4}
			S^{m,i}\big(D_{m,0}(I^1,...,I^k)\big)\leq S^{m,i}\big(D_{m,0}(T^1,...,T^k)\big)
		\end{align}
		whenever
		\begin{align*}
			(I^1,...,I^k)\leq (T^1,...,T^k).
		\end{align*}
	\end{enumerate}
\end{lemma}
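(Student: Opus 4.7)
I would prove both parts by induction on $k$, with $k=2$ being exactly Lemma~\ref{lem:qz}. The structural identity driving both inductive steps is
\begin{equation*}
D^{(k)}_{m,0}(I^1,\dots,I^k)_i \;=\; D_{m,0}\!\big(I^1,\,D^{(k-1)}_{m,0}(I^2,\dots,I^k)\big)_i
\qquad\text{for } i\ge m,
\end{equation*}
which follows from $D^{(k)}=D\circ D^{(k-1)}$ together with the elementary observation that zero inputs on $(-\infty,m)$ produce zero waiting times (and therefore zero outputs) on $(-\infty,m)$: hence the outer $D$, applied to sequences that vanish on $(-\infty,m)$, acts as $D_{m,0}$.

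\textbf{Part 1.} The inductive chain I plan to write is
\begin{align*}
D^{(k)}(I^1,\dots,I^k)_i
&=D\!\big(I^1,D^{(k-1)}(I^2,\dots,I^k)\big)_i \\
&\le D_{m,0}\!\big(I^1,D^{(k-1)}(I^2,\dots,I^k)\big)_i \\
&=D\!\big(\hat I^1,\widehat{D^{(k-1)}(I^2,\dots,I^k)}\big)_i \\
&\le D\!\big(\hat I^1,D^{(k-1)}(\hat I^2,\dots,\hat I^k)\big)_i
=D^{(k)}_{m,0}(I^1,\dots,I^k)_i,
\end{align*}
valid for $i\ge m$. The first inequality is Lemma~\ref{lem:qz}, part~\ref{od}, applied to the pair $(\servv,\arrv)=(I^1,D^{(k-1)}(I^2,\dots,I^k))$. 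The second uses (a) the pointwise comparison $\widehat{D^{(k-1)}(I^2,\dots,I^k)}\le D^{(k-1)}(\hat I^2,\dots,\hat I^k)$, which on $[m,\infty)$ is the inductive hypothesis and on $(-\infty,m)$ reads $0\le 0$, and (b) the pointwise monotonicity of $D(f,\cdot)$ in its arrival argument. The latter is immediate from $d_i=(w_{i-1}+s_{i-1}-a_i)^-+s_i$: increasing $\arrv$ pointwise decreases both $w_{i-1}$ and $-a_i$, so the argument of $(\cdot)^-$ decreases and $d_i$ increases.

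\textbf{Part 2.} Again I induct on $k$, changing coordinates one at a time: compare the configurations $(T^1,\dots,T^{j-1},I^j,\dots,I^k)$ and $(T^1,\dots,T^{j-1},T^j,I^{j+1},\dots,I^k)$ for $j=1,\dots,k$. The $j=1$ step, in which only the outer service input changes, follows directly from Lemma~\ref{lem:qz}, part~\ref{od2}, applied to $D_{m,0}(\cdot,g)$ with $g=D^{(k-1)}_{m,0}(I^2,\dots,I^k)$ fixed. For $j\ge 2$ only the inner argument $g=D^{(k-1)}_{m,0}(T^2,\dots,T^{j-1},I^j,I^{j+1},\dots,I^k)$ changes to $g'=D^{(k-1)}_{m,0}(T^2,\dots,T^{j-1},T^j,I^{j+1},\dots,I^k)$, and the inductive hypothesis (applied to the $(k-1)$-variable map) yields only the partial-sum ordering $S^{m,l}(g)\le S^{m,l}(g')$ for $l\ge m$, not a pointwise one. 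To bridge this gap I would invoke the idle-time/LPP representation
\begin{equation*}
S^{m,i}\big(D_{m,0}(f,g)\big)\;=\;\max_{m-1\le j\le i}\!\big[\,S^{m,j}(g)+S^{j+1,i}(f)\,\big],
\end{equation*}
which is immediate from the Skorokhod reflection for the total idle time combined with the zero initial waiting time at $m-1$ imposed by $D_{m,0}$. Each summand is weakly increasing in $S^{m,j}(g)$, so the $S^{m,\cdot}$-ordering on $g$ lifts at once to the required $S^{m,i}$-inequality on $D_{m,0}(T^1,g)$, closing the induction.

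\textbf{The hard part.} The one non-routine observation is precisely that the inductive hypothesis for Part~2 delivers only a partial-sum comparison of the inner queue output, whereas the directly available monotonicity (Lemma~\ref{lem:qz}, part~\ref{od2}) is phrased with pointwise ordering of its arrival argument. The idle-time representation above resolves the mismatch because it makes $S^{m,i}(D_{m,0}(f,g))$ depend on $g$ only through its partial sums; once this is recorded, both inductions close immediately.
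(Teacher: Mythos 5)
Your proof is correct and follows essentially the same inductive scheme as the paper. For Part~\ref{od3} you apply Lemma~\ref{lem:qz}(\ref{od}) at the outer queueing level first and then the inductive hypothesis via pointwise monotonicity of $D(f,\cdot)$; the paper performs these two steps in the opposite order, which is equivalent. For Part~\ref{od4}, the observation you single out as the hard part---that $S^{m,i}(D_{m,0}(f,g))$ depends on $g$ only through the partial sums $S^{m,j}(g)$, so the weak ordering supplied by the inductive hypothesis suffices---is precisely the paper's mechanism: it writes, via \eqref{sd} and \eqref{psi}, $S^{m,i}\big(D_{m,0}(f,g)\big) = S^{m,i}(f) + \inf_{m\le j\le i}\big(S^{m,j-1}(f)-S^{m,j}(g)\big)^-$ and substitutes the inductive partial-sum inequality into that expression, then invokes Lemma~\ref{lem:qz}(\ref{od2}) for the outer service argument. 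One small slip: with the conventions of \eqref{d1}, your idle-time formula should be $S^{m,i}(D_{m,0}(f,g)) = \max\big(S^{m,i}(f),\,\max_{m\le j\le i}[S^{m,j}(g)+S^{j,i}(f)]\big)$, i.e.\ the $f$-block starts at $j$ rather than $j+1$; the off-by-one is immaterial to the argument since you only use that the right-hand side is a maximum of terms each nondecreasing in $S^{m,j}(g)$.
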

\begin{proof}
	The proof of \ref{od3} follows by induction on $k$. The base case is  $k=2$  was proven in Lemma \ref{lem:qz}. Assume the claim holds for $k-1$ . Then for $i\geq m$
	\begin{align*}
			&D^{(k)}(I^1,...,I^k)_i=	D\big(I^1,D^{(k-1)}(I^2,...,I^k)\big)_i\leq D\big(I^1,D^{(k-1)}_{m,0}(I^2,...,I^k)\big)_i\\
			&\stackrel{\eqref{od}}{\leq} D_{m,0}\big(I^1,D^{(k-1)}_{m,0}(I^2,...,I^k)\big)_i=D^{(k)}_{m,0}(I^1,...,I^k)_i,
	\end{align*}
where in the first inequality we used that the map $D(\cdot,\cdot)$ is increasing in its second element. We have thus proven \ref{od3}. Similarly, 
\begin{align*}
	&S^{m,i}\big(D^{(k)}_{m,0}(I^1,...,I^k)\big)=S^{m,i}\Big(D_{m,0}\big(I^1,D^{(k-1)}_{m,0}(I^2,...,I^k)\big)\Big)\\
	&\leq  S^{m,i}(I^1)+\inf_{m \leq j\leq i}\Big(S^{m,j-1}(I^1)-S^{m,j}(D^{(k-1)}_{m,0}(I^2,...,I^k))\Big)^-\\
	& \leq S^{m,i}(I^1)+\inf_{m \leq j\leq i}\Big(S^{m,j-1}(I^1)-S^{m,j}(D^{(k-1)}_{m,0}(T^2,...,T^k))\Big)^-\\
	&=S^{m,i}\Big(D_{m,0}\big(I^1,D^{(k-1)}_{m,0}(T^2,...,T^k)\big)\Big)\stackrel{\eqref{od2}}{\leq} S^{m,i}\Big(D_{m,0}\big(T^1,D^{(k-1)}_{m,0}(T^2,...,T^k)\big)\Big) \\
	&=S^{m,i}\big(D^{(k)}_{m,0}(T^1,...,T^k)\big),
\end{align*}
where in the second inequality we used the induction hypothesis, thus proving  \ref{od4}.
\end{proof}
Next we obtain a simple bound on the queueing map in terms of its input. For integers $m\leq n$ define the function $\Phi_{m,n}:\R^{\llbracket m,\infty\rrbracket}\rightarrow \R_+$ through
\begin{align}\label{mx}
	\Phi_{m,n}(I)=\big[\sup_{m\leq i\leq n} S^{m,i}(I)\big]^+.
\end{align}
Define $\Mx_{m,n}:\qs\rightarrow \R$ through
\begin{align}
	\Mx_{m,n}(I)=\max_{m\leq i\leq n}|I_i|
\end{align}
\begin{lemma}\label{lem:bs}
	Let $m,n\in \Z$ such that $m\leq n$ and let $I^0\in \qs$. Then
	\begin{align}\label{bs}
		\Phi_{m,n} (D^{(k)}_{m,0}(I^1,...,I^k)-I^0)\leq \sum_{l=1}^{k}\big[\Phi_{m,n}(I^l-I^{l-1})\big]+\sum_{l=2}^{k}\Mx_{m,n}\Big(\sum_{i=l}^{k}I^i\Big).
	\end{align}
\end{lemma}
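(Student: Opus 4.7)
The plan is to argue by induction on $k$, leveraging the recursive structure of $D^{(k)}_{m,0}$ together with the single-queue bound already recorded in \eqref{sd}. The base case $k=1$ is trivial since $D^{(1)}_{m,0}(I^1) = I^1$, so both sides of the claim reduce to $\Phi_{m,n}(I^1 - I^0)$.

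For the inductive step, the crucial observation is that, on $[m,\infty)$,
$$D^{(k)}_{m,0}(I^1,\ldots,I^k) = D_{m,0}\big(I^1,\, D^{(k-1)}_{m,0}(I^2,\ldots,I^k)\big),$$
which follows directly from the recursion $D^{(k)}(\cdot) = D(\cdot, D^{(k-1)}(\cdot))$ from \eqref{q} applied to the sequences zeroed out before $m$. Write $X := D^{(k-1)}_{m,0}(I^2,\ldots,I^k)$. Applying the bound in \eqref{sd} (the $k=2$ instance of the claim), subtracting $S^{m,i}(I^0)$ from both sides, and then taking the supremum over $m \leq i \leq n$ followed by the positive part yields
$$\Phi_{m,n}\big(D_{m,0}(I^1, X) - I^0\big) \leq \Phi_{m,n}(I^1 - I^0) + \Phi_{m,n}(X - I^1) + \Mx_{m,n}(X).$$
For the middle term, the induction hypothesis applied to $X$ with $I^1$ playing the role of $I^0$ provides
$$\Phi_{m,n}(X - I^1) \leq \sum_{l=2}^{k} \Phi_{m,n}(I^l - I^{l-1}) + \sum_{l=3}^{k} \Mx_{m,n}\Big(\sum_{i=l}^{k} I^i\Big),$$
which supplies all but one of the terms on the right-hand side of \eqref{bs}.

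The remaining step, and the only point where care is needed, is to show
$$\Mx_{m,n}(X) \leq \Mx_{m,n}\Big(\sum_{i=2}^{k} I^i\Big).$$
This follows from the pointwise bound $0 \leq X_j \leq \sum_{i=2}^{k} I^i_j$, obtained by iterating the elementary single-queue inequality
$$D_{m,0}(\servv,\arrv)_j = (\arrv_j - J'_{j-1})^+ + \servv_j \leq \servv_j + \arrv_j,$$
valid whenever $\servv,\arrv \geq 0$ because $J'_{j-1} \geq 0$ for an initially empty queue fed non-negative service times. Applying this bound down the chain of $k-1$ queues defining $X = D^{(k-1)}_{m,0}(I^2,\ldots,I^k) = D_{m,0}(I^2, D^{(k-2)}_{m,0}(I^3,\ldots,I^k))$ peels off one summand at each step and yields the claim. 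Summing the three estimates completes the induction.

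The main obstacle is essentially bookkeeping: the two contributions $\Phi_{m,n}(I^l - I^{l-1})$ and $\Mx_{m,n}(\sum_{i=l}^k I^i)$ must telescope correctly through the recursion. The pointwise bound on $X_j$ is the only step that makes substantive use of non-negativity of the $I^l$'s, which is consistent with the exponential distributional setting in which this lemma is invoked.
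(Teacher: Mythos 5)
Your proof is correct and takes a recognizably different route from the paper's. Both argue by induction on $k$ through the recursive structure of $D^{(k)}_{m,0}$, but the decompositions differ: you peel from the front, writing $D^{(k)}_{m,0}(I^1,\dots,I^k) = D_{m,0}\big(I^1, D^{(k-1)}_{m,0}(I^2,\dots,I^k)\big)$ and applying the $k=2$ base bound first, then the induction hypothesis to $\Phi_{m,n}(X-I^1)$; the paper peels from the back, writing $D^{(k)}_{m,0}(I^1,\dots,I^k) = D^{(k-1)}_{m,0}\big(I^1,\dots,I^{k-2},D_{m,0}(I^{k-1},I^k)\big)$, invoking the induction hypothesis immediately on this length-$(k-1)$ list, and then cleaning up the leftover terms $\Phi_{m,n}(D_{m,0}(I^{k-1},I^k)-I^{k-2})$ and $\Mx_{m,n}(D_{m,0}(I^{k-1},I^k))$ via the base case \eqref{ub69} and the one-queue bound \eqref{M}. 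The genuinely new ingredient in your version is the iterated pointwise bound $0\leq X_j\leq \sum_{i=2}^k I^i_j$, which handles the single leftover term $\Mx_{m,n}(X)$ in one stroke; the paper only ever needs the bound \eqref{M} for one queue, since its leftover involves $D_{m,0}(I^{k-1},I^k)$ rather than the full $(k-1)$-fold composition. Your approach is arguably cleaner: all the terms of \eqref{bs} appear directly and telescope without any post-processing substitution step, whereas the paper's derivation from \eqref{ph} to the final claim requires a small amount of reconciliation between $\sum_{i=l}^{k-2}I^i$ and $\sum_{i=l}^{k}I^i$ that is left implicit. You correctly flag, and both arguments implicitly require, non-negativity of $I^1,\dots,I^k$ — which holds in all applications since these are exponential sequences — and you are explicit that $I^0$ may be arbitrary, matching the lemma's statement.
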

\begin{proof}
	The proof follows by induction on $k$. The base case is $k=2$. Indeed, from \eqref{sd}
	\begin{align}\label{ub69}
		\Phi_{m,n}(D_{m,0}(I^1,I^2)-I^0)&=\big[\sup_{m\leq i\leq n} S^{m,i}(I^1-I^0)+\psi^m(0,I^1,I^2)_i\big]^+\\
		&\leq \Phi_{m,n}(I^1-I^0)
		+\Phi_{m,n}(I^2-I^1)
		+\Mx_{m,n}(I^2).\nonumber
	\end{align}
Assume the hypothesis holds for $k-1\geq 2$, then
\begin{align}\label{ph}
		&\Phi_{m,n} (D^{(k)}_{m,0}(I^1,...,I^k)-I^0)=\Phi_{m,n} (D^{(k-1)}_{m,0}(I^1,...,D_{m,0}(I^{k-1},I^k))-I^0)\\\nonumber
		& \leq \sum_{l=1}^{k-2}\big[\Phi_{m,n}(I^l-I^{l-1})\big]+\sum_{l=2}^{k-2}\Mx_{m,n}\Big(\sum_{i=l}^{k-2}I^i\Big)+\Phi_{m,n}(D_{m,0}(I^{k-1},I^k)-I^{k-2})\\
		&+\Mx_{m,n}\big(D_{m,0}(I^{k-1},I^k)\big).\nonumber
\end{align}
Note that by the base case $k=2$ in \eqref{ub69}
\begin{align}\label{ph2}
	&\Phi_{m,n}(D_{m,0}(I^{k-1},I^k)-I^{k-2})\\
	&\leq\Phi_{m,n}(I^{k-1}-I^{k-2})+\Phi_{m,n}(I^k-I^{k-1})+\Mx_{m,n}(I^k),\nonumber
\end{align}
and that
\begin{align}\label{M}
	\Mx_{m,n}\big(D_{m,0}(I^{k-1},I^k)\big)=\max_{m\leq i\leq n}|D_{m,0}(I^{k-1},I^k)_i|\leq \max_{m\leq i\leq n}|I^k_i+I^{k-1}_i|=\Mx_{m,n}\big(I^k+I^{k-1}\big).
\end{align}
Plugging \eqref{ph2} and \eqref{M} in \eqref{ph}, we obtain the result.
\end{proof}
\subsection{Results on Random walks}
\begin{lemma}\label{lem:rwub}
	Let $\Delta,\rho>0$ and let $\{X_i\}_{i\in \Z}$ be i.i.d. r.v.s such that $X_0\sim \mathrm{Exp}(\rho+\Delta,\rho)$. Assume that $\sqrt{\Delta^2+4m^{-1}}<\rho$, then
	\begin{align*}
		\P(\inf_{0\leq j\leq m}S^{0,j}(X)<-t)\leq  \Big(1+\frac{m^{-1}}{(\rho-\sqrt{\Delta^2+4m^{-1}})\rho}\Big)^me^{-\frac{\sqrt{\Delta^2+4m^{-1}}-\Delta}{2}t}.
	\end{align*}
\end{lemma}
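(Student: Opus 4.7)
My plan is a Chernoff-type bound via Doob's submartingale maximal inequality, with a carefully chosen tilt. Writing $X_0\sim Z_{\rho+\Delta}-Z_{\rho}$ for independent exponentials $Z_a\sim\mathrm{Exp}(a)$, one computes the moment generating function
\[
\phi(\theta):=\E[e^{-\theta X_0}]=\frac{\rho(\rho+\Delta)}{(\rho-\theta)(\rho+\Delta+\theta)}, \qquad -(\rho+\Delta)<\theta<\rho,
\]
and the useful identity
\[
\phi(\theta)-1=\frac{\theta(\theta+\Delta)}{(\rho-\theta)(\rho+\Delta+\theta)}.
\]
In particular $\phi(\theta)\ge 1$ for every $\theta\in[0,\rho)$, so the process $j\mapsto e^{-\theta S^{0,j}(X)}$ is a non-negative submartingale.

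Applying Doob's $L^1$ maximal inequality to this submartingale,
\[
\P\Big(\inf_{0\le j\le m}S^{0,j}(X)<-t\Big)=\P\Big(\max_{0\le j\le m}e^{-\theta S^{0,j}(X)}> e^{\theta t}\Big)\le \phi(\theta)^{m}\,e^{-\theta t},
\]
valid for every $\theta\in(0,\rho)$. I then specialize to the tilt $\theta^\ast=(\delta-\Delta)/2$ with $\delta=\sqrt{\Delta^2+4m^{-1}}$; the hypothesis $\delta<\rho$ guarantees $\theta^\ast\in(0,\rho)$, and the exponential factor $e^{-\theta^\ast t}$ already matches the one in the statement. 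The key cancellation is
\[
\theta^\ast(\theta^\ast+\Delta)=\frac{(\delta-\Delta)(\delta+\Delta)}{4}=\frac{\delta^2-\Delta^2}{4}=\frac{1}{m},
\]
which gives the exact identity $\phi(\theta^\ast)-1=m^{-1}/[(\rho-\theta^\ast)(\rho+\Delta+\theta^\ast)]$.

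The remaining step, which I expect to be the main (though still routine) obstacle, is the denominator comparison $(\rho-\theta^\ast)(\rho+\Delta+\theta^\ast)\ge (\rho-\delta)\rho$. Since $(\rho-\theta^\ast)(\rho+\Delta+\theta^\ast)=\rho(\rho+\Delta)-\theta^\ast(\theta^\ast+\Delta)=\rho(\rho+\Delta)-m^{-1}$, this reduces to verifying $\rho(\Delta+\delta)\ge m^{-1}$. Because $\rho>\delta>\Delta\ge 0$, one has $\rho(\Delta+\delta)>\delta^{2}>\delta^{2}-\Delta^{2}=4m^{-1}>m^{-1}$. Consequently $\phi(\theta^\ast)\le 1+m^{-1}/[(\rho-\delta)\rho]$, and substituting into the Doob bound produces the stated estimate.
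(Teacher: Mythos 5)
Your proof is correct and follows essentially the same route as the paper: Doob's maximal inequality applied to the exponential submartingale, the same moment generating function identity, and the same optimal tilt $\theta^\ast=\bigl(\sqrt{\Delta^2+4m^{-1}}-\Delta\bigr)/2$ chosen so that $\theta^\ast(\theta^\ast+\Delta)=m^{-1}$. The only cosmetic difference is in bounding the denominator: the paper simply uses $\rho+\Delta+\theta^\ast\ge\rho$ and $\rho-\theta^\ast\ge\rho-\sqrt{\Delta^2+4m^{-1}}$ factor by factor, whereas you verify the equivalent inequality $\rho(\Delta+\delta)\ge m^{-1}$ algebraically.
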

\begin{proof}
	Let $\phi_{\rho}(\theta)$ be the moment generating function of the exponential distribution with intensity $\rho>0$. Using Doob's inequality
	\begin{align}\label{ub2}
		\P(\inf_{0\leq j\leq m}S^{0,j}(X)<-t)=\P(\sup_{0\leq j\leq m}-S^{0,j}(X)>t)\leq \big[\phi_{\rho+\Delta}(-\theta)\phi_{\rho}(\theta)\big]^me^{-\theta t}.
	\end{align}
	Compute
	\begin{align}\label{mgf}
		\phi_{\rho+\Delta}(-\theta)\phi_{\rho}(\theta)=\frac{(\rho+\Delta)\rho}{(\rho+\Delta+\theta)(\rho-\theta)}=1+\frac{\theta^2+\theta\Delta}{(\rho+\Delta+\theta)(\rho-\theta)}.
	\end{align}
	Setting $\theta_0=\frac{-\Delta+\sqrt{\Delta^2+4m^{-1}}}{2}>0$ such that $\theta_0^2+\theta_0\Delta=m^{-1}$, plugging \eqref{mgf} in \eqref{ub2}
	\begin{align*}
		&\P(\inf_{0\leq j\leq m}S^{0,j}(X)<-t)\leq \Big(1+\frac{m^{-1}}{(\rho-\theta)\rho}\Big)^me^{-\frac{\sqrt{\Delta^2+4m^{-1}}-\Delta}{2}t}\\
		&\leq  \Big(1+\frac{m^{-1}}{(\rho-\sqrt{\Delta^2+4m^{-1}})\rho}\Big)^me^{-\frac{\sqrt{\Delta^2+4m^{-1}}-\Delta}{2}t},
	\end{align*}
	which is the required result.
\end{proof}
\begin{lemma}\label{lem:rw}
	Let $X=\{X_i\}_{i=1}^\infty$ be i.i.d. r.v's such that $X_1\sim \textrm{Exp}(\rho+\Delta)-\textrm{Exp}(\rho)$. Then
	\begin{align}\label{ub14}
	\P\Big(S^{0,m}(X)>t\Big)\leq \Big(1-\frac{\Delta^2}{4(\rho+\Delta/2)^2}\Big)^me^{-\Delta t/2 }\leq e^{-\frac{\Delta^2m}{4(\rho+\frac{\Delta}{2})^2}-\Delta t/2}. 
	\end{align}
	In particular, for $\Delta=\mu N^{-1/3}$ and $m=xN^{2/3}$ where $x>0$, and for large enough $N$,
	\begin{align}\label{ub15}
		\P\Big(S^{0,m}(X)>tN^{1/3}\Big)\leq e^{-\frac\mu 4\big(\frac{\mu x}{\rho^2}+t\big)}
	\end{align} 
\end{lemma}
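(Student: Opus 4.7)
The plan is a standard Chernoff-Markov argument exploiting the explicit form of the moment generating function (MGF) of a difference of two independent exponential random variables, followed by an optimal-looking (non-optimal) choice of the tilt parameter $\theta=\Delta/2$, which produces the symmetric expression $(\rho+\Delta/2)^2$ in the denominator. No heavy machinery is required; the routine consists of (i) writing down the MGF, (ii) plugging in $\theta = \Delta/2$, (iii) using the convexity bound $1-u\leq e^{-u}$, and (iv) substituting the scaling to extract \eqref{ub15}.

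More concretely, if $X_1 \sim \mathrm{Exp}(\rho+\Delta) - \mathrm{Exp}(\rho)$ with the two summands independent, then for $\theta \in (-\rho,\rho+\Delta)$
\begin{align*}
	M(\theta) := \E\big[e^{\theta X_1}\big] = \frac{\rho+\Delta}{\rho+\Delta-\theta}\cdot \frac{\rho}{\rho+\theta} = \frac{(\rho+\Delta)\rho}{(\rho+\Delta-\theta)(\rho+\theta)}.
\end{align*}
By independence and Markov's inequality,
\begin{align*}
	\P\big(S^{0,m}(X)>t\big) \leq M(\theta)^m e^{-\theta t}.
\end{align*}
The key observation is that the denominator $(\rho+\Delta-\theta)(\rho+\theta)$, regarded as a quadratic in $\theta$, is maximized at the midpoint $\theta = \Delta/2$, where it equals $(\rho+\Delta/2)^2$. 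Substituting this choice and using the algebraic identity $(\rho+\Delta)\rho = (\rho+\Delta/2)^2 - (\Delta/2)^2$ gives
\begin{align*}
	M(\Delta/2) = 1 - \frac{\Delta^2}{4(\rho+\Delta/2)^2},
\end{align*}
which together with $e^{-\theta t} = e^{-\Delta t/2}$ yields the first inequality of \eqref{ub14}. The second inequality is immediate from $1-u \leq e^{-u}$.

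For \eqref{ub15}, I would simply substitute $\Delta = \mu N^{-1/3}$, $m = xN^{2/3}$, and replace $t$ by $tN^{1/3}$ in the second inequality of \eqref{ub14}. The exponent becomes $-\frac{\mu^2 x}{4(\rho+\mu N^{-1/3}/2)^2} - \frac{\mu t}{2}$; for $N$ sufficiently large (depending on $\mu, \rho$) the denominator $(\rho+\mu N^{-1/3}/2)^2$ differs from $\rho^2$ by an $O(N^{-1/3})$ correction, which can be absorbed into the slack between $\mu t/2$ and $\mu t/4$, producing the advertised bound $\exp\big(-\tfrac{\mu}{4}(\tfrac{\mu x}{\rho^2}+t)\big)$. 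There is no genuine obstacle here; the only mild bookkeeping concern is making the absorption of the $o(1)$ prefactor correction into the linear-in-$t$ term precise, but this is a one-line estimate.
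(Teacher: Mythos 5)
Your argument is exactly the one in the paper: Chernoff's bound at $\theta=\Delta/2$, the identity $(\rho+\Delta)\rho=(\rho+\Delta/2)^2-(\Delta/2)^2$ to extract $1-\tfrac{\Delta^2}{4(\rho+\Delta/2)^2}$, then $1-u\le e^{-u}$, and finally plugging in the scaling and letting $N$ be large for \eqref{ub15}. One small caveat, which applies equally to the paper's terse justification: the shortfall $\tfrac{\mu^2x}{4\rho^2}-\tfrac{\mu^2x}{4(\rho+\mu N^{-1/3}/2)^2}$ that you absorb into the slack $\tfrac{\mu t}{2}-\tfrac{\mu t}{4}=\tfrac{\mu t}{4}$ is of order $N^{-1/3}$ but does not depend on $t$, so the threshold "large enough $N$" must be read as depending on $t$ (as well as on $\mu,x,\rho$); your parenthetical "depending on $\mu,\rho$" omits this, and the bound is not uniform as $t\downarrow 0$.
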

\begin{proof}
	Using Chernoff's bound with $\theta=\Delta/2$ gives 
	\begin{align*}
			&\P\Big(S^{0,m}(X)>t\Big)\leq[\phi_{\rho+\Delta}(\theta)\phi_{\rho}(-\theta)]^me^{-\theta t}\leq \Big(1-\frac{\Delta^2}{4(\rho+\Delta/2)^2}\Big)^me^{-\Delta t/2 }\\
			&\leq e^{-\frac{\Delta^2m}{4(\rho+\frac{\Delta}{2})^2}-\Delta t/2},
	\end{align*}
	proving \eqref{ub14}. Plugging $\Delta$ and $m$ in \eqref{ub14} and sending $N\rightarrow\infty$ gives \eqref{ub15}.
\end{proof}
\begin{lemma}\label{lem:rw2}
	Let $X=\{X_i\}_{i=1}^\infty$ be i.i.d. r.v's such that $X_1\sim \textrm{Exp}(\rho+\Delta)-\textrm{Exp}(\rho)$. Then for $t>0$ and any $m\in \N$
	\begin{align}\label{ub16}
	\P\Big(\sup_{1\leq j\leq m} S^{1,j}(X)>t\Big)
	\leq e^{-\Delta t}. 
	\end{align} 
\end{lemma}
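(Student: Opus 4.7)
The plan is to apply Doob's maximal inequality to the exponential martingale associated to the random walk $S^{1,\cdot}(X)$. First I would compute the moment generating function of $X_1 \sim \mathrm{Exp}(\rho+\Delta)-\mathrm{Exp}(\rho)$,
\begin{align*}
\E\bigl[e^{\theta X_1}\bigr] \;=\; \phi_{\rho+\Delta}(\theta)\,\phi_{\rho}(-\theta) \;=\; \frac{\rho+\Delta}{\rho+\Delta-\theta}\cdot\frac{\rho}{\rho+\theta},
\end{align*}
valid for $\theta\in(-\rho,\rho+\Delta)$. A direct algebraic manipulation shows that $\E[e^{\theta X_1}]=1$ precisely at $\theta=0$ and $\theta=\Delta$ (as in the computation following \eqref{mgf}, one finds $\E[e^{\theta X_1}]-1 = \tfrac{\theta(\theta-\Delta)}{(\rho+\Delta-\theta)(\rho+\theta)}$, which vanishes at the two claimed values).

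Next I would choose $\theta=\Delta$, which is exactly the positive root, and set
\begin{align*}
M_j \;=\; e^{\Delta\, S^{1,j}(X)}, \qquad j\geq 0,
\end{align*}
with the convention $M_0=1$. Because $X_1,X_2,\ldots$ are i.i.d.\ with $\E[e^{\Delta X_1}]=1$, the process $\{M_j\}_{j\geq 0}$ is a non-negative martingale with respect to the filtration generated by $X$. Applying Doob's maximal inequality then gives
\begin{align*}
\P\Bigl(\sup_{1\leq j \leq m} S^{1,j}(X) > t\Bigr)
\;=\; \P\Bigl(\sup_{1\leq j \leq m} M_j > e^{\Delta t}\Bigr)
\;\leq\; e^{-\Delta t}\,\E[M_m]
\;=\; e^{-\Delta t},
\end{align*}
which is the claimed bound.

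There is no real obstacle here; the only mild subtlety is verifying that $\theta=\Delta$ indeed lies strictly inside the interval $(-\rho,\rho+\Delta)$ of absolute convergence of the m.g.f., which is immediate since $\Delta<\rho+\Delta$. Everything else is routine once the exponential martingale is identified, and no approximation or event-based conditioning is needed.
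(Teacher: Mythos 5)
Your proof is correct and follows essentially the same route as the paper: identify $\theta=\Delta$ as the nontrivial root of $\E[e^{\theta X_1}]=1$, form the exponential martingale $e^{\Delta S^{1,j}(X)}$, and apply Doob's maximal inequality. You spell out the MGF computation and the admissible range of $\theta$ more explicitly than the paper does, but there is no substantive difference.
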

\begin{proof}
	For $\theta=\Delta$, $\E(e^{\theta X_1})=1$ which implies that $\{e^{\theta S^{0,j}}\}_{j\geq 1}$ is a martingale. Using Doob's inequality   
	\begin{align*}
		\P\Big(\sup_{0\leq j\leq m} S^{0,j}(X)>t\Big)=\P\Big(\sup_{0\leq j\leq m} e^{\Delta S^{0,j}(X)}>e^{\Delta t}\Big) \leq e^{-\Delta t}.
	\end{align*}
\end{proof}
\begin{lemma}\label{lem:max}
	Let $X=\{X_i\}_{i=1}^\infty$ be i.i.d. r.v's such that $X_1\sim \textrm{Exp}(\rho + \Delta)-\textrm{Exp}(\rho)$. Let $F$ be a random element in $\cS$ such that
	\begin{align*}
		d_N(F,\cm(X))\leq \cC_N \quad \forall N\geq 1
	\end{align*}
where $\{\cC_i\}_{i=1}^{\infty}$ is a sequence of positive random variables.
	 Then for  $\Delta \leq \frac{2\rho}{5}$ and $M\in\N$ such that  $M>24\Delta^{-2}\rho^2$
	 \begin{align}\label{ub40}
	 	\P\Big(\sup_{M<t}F(t)>0\Big)\leq 4e^{-\frac{\Delta^2 M}{12\rho^2}}+\sum_{k=1}^{\infty}\P\Big(\cC_{(k+1)M}\geq M\frac{k\Delta }{\rho^{2}48}\Big). 
	 \end{align}
\end{lemma}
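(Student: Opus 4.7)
The plan is a peeling argument on $(M,\infty)$. First I would partition this half-line into slabs $I_k:=[kM,(k+1)M]$ for $k\geq 1$ and apply a union bound. On the $k$-th slab, the hypothesis $d_N(F,\cm(X))\leq \cC_N$ applied at $N=(k+1)M$ yields $|F-\cm(X)|\leq \cC_{(k+1)M}$ uniformly on $I_k$, so $\{\sup_{t\in I_k}F(t)>0\}$ is contained in $\{\sup_{t\in I_k}\cm(X)(t)>-\cC_{(k+1)M}\}$. Since $\cm(X)$ is piecewise linear with nodes at the integers (see \eqref{cm}), this supremum is attained at an integer, so the problem reduces to controlling
\[
\P\bigl(\max_{kM\leq j\leq (k+1)M}S^{0,j}(X)>-\cC_{(k+1)M}\bigr)
\]
for each $k\geq 1$.

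Next I would split each slab event according to whether $\cC_{(k+1)M}\geq kM\Delta/(48\rho^{2})$; the "large $\cC$" side instantly produces the sum $\sum_{k\geq 1}\P(\cC_{(k+1)M}\geq kM\Delta/(48\rho^{2}))$ appearing in \eqref{ub40}. On the complementary side I would write $S^{0,j}(X)=S^{0,kM}(X)+\tilde S^{j-kM}$ with $\tilde S^{l}:=S^{kM,kM+l}(X)$ an independent copy of the same random walk, and introduce the auxiliary threshold $u:=kM\Delta/(12\rho^{2})$. Then the event is contained in the union of \textbf{(a)} $\{S^{0,kM}(X)>-kM\Delta/(48\rho^{2})-u\}$ and \textbf{(b)} $\{\max_{0\leq l\leq M}\tilde S^{l}>u\}$, which requires no independence to justify.

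Event (b) is exactly the setup of Lemma \ref{lem:rw2}, yielding $\P(\mathrm{b})\leq e^{-\Delta u}=e^{-kM\Delta^{2}/(12\rho^{2})}$. Event (a) is the lower tail of the positive-drift random walk with steps $-X_i\sim\mathrm{Exp}(\rho)-\mathrm{Exp}(\rho+\Delta)$, and I would treat it by a Chernoff bound with parameter $\theta=-\Delta/2$, reusing the moment generating function identity \eqref{mgf} in the form $\E[e^{-\Delta X_1/2}]=1-\Delta^{2}/[4(\rho+\Delta/2)^{2}]$. Setting $\alpha:=kM\Delta/(48\rho^{2})+u=5kM\Delta/(48\rho^{2})$, the Chernoff estimate gives $\P(\mathrm{a})\leq e^{\Delta\alpha/2-kM\Delta^{2}/[4(\rho+\Delta/2)^{2}]}$. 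The hypothesis $\Delta\leq 2\rho/5$ forces $(\rho+\Delta/2)^{2}\leq 36\rho^{2}/25$, so $\Delta^{2}/[4(\rho+\Delta/2)^{2}]\geq 25\Delta^{2}/(144\rho^{2})$, while $\Delta\alpha/2=5kM\Delta^{2}/(96\rho^{2})=15kM\Delta^{2}/(288\rho^{2})$; the net exponent collapses to $-35kM\Delta^{2}/(288\rho^{2})\leq -kM\Delta^{2}/(12\rho^{2})$, so $\P(\mathrm{a})\leq e^{-kM\Delta^{2}/(12\rho^{2})}$ as well.

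Combining, each slab contributes at most $2e^{-kM\Delta^{2}/(12\rho^{2})}$ plus the $\cC$-tail, and the geometric sum over $k\geq 1$ gives $2e^{-M\Delta^{2}/(12\rho^{2})}/(1-e^{-M\Delta^{2}/(12\rho^{2})})$. The hypothesis $M>24\rho^{2}/\Delta^{2}$ makes $M\Delta^{2}/(12\rho^{2})>2$, hence $1-e^{-M\Delta^{2}/(12\rho^{2})}\geq 1/2$, and the sum is bounded by the announced $4e^{-M\Delta^{2}/(12\rho^{2})}$. The main technical obstacle will be the simultaneous tuning of the two thresholds $kM\Delta/(48\rho^{2})$ for $\cC$ and $u=kM\Delta/(12\rho^{2})$ for the split between (a) and (b): both the direct Lemma \ref{lem:rw2} bound on (b) and the Chernoff bound on (a) must produce a rate no worse than $1/(12\rho^{2})$ in the exponent, and the balance is only barely closed by the hypothesis $\Delta\leq 2\rho/5$; any weaker bound on $\Delta/\rho$ would shrink the constant $25/144$ below $1/12$ and would require either a larger prefactor in front of the exponential or a reparametrisation of the two thresholds.
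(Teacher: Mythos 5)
Your proof is correct and follows essentially the same outline as the paper's argument: decompose $(M,\infty)$ into length-$M$ slabs, on each slab split according to whether $\cC_{(k+1)M}$ exceeds a threshold of order $k\Delta M/\rho^2$, and on the complement control $\sup_{t\in I_k}\cm(X)(t)$ by separately bounding the walk at the left endpoint $kM$ (a Chernoff/Lemma~\ref{lem:rw}-type bound) and the increment over $I_k$ (Lemma~\ref{lem:rw2}). The only substantive difference is your choice of thresholds, namely $-\tfrac{5kM\Delta}{48\rho^{2}}$ for the walk at $kM$ and $\tfrac{kM\Delta}{12\rho^{2}}$ for the increment, versus the paper's $-\tfrac{k\hat M}{2}=-\tfrac{kM\Delta}{12\rho^{2}}$ and $\tfrac{k\hat M}{4}=\tfrac{kM\Delta}{24\rho^{2}}$.

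This difference is not cosmetic: with your allocation both per-slab error terms decay like $e^{-k\Delta^{2}M/(12\rho^{2})}$, and the geometric series (using $M>24\Delta^{-2}\rho^{2}$, hence $\Delta^{2}M/(12\rho^{2})>2$) delivers exactly the stated constant $4e^{-\Delta^{2}M/(12\rho^{2})}$. In the paper the two per-slab terms decay at rates $e^{-k\Delta^{2}M/(8\rho^{2})}$ and $e^{-k\Delta^{2}M/(24\rho^{2})}$, the worse of which dominates, so the final substitution $\hat M\mapsto \Delta M/(6\rho^{2})$ actually produces $4e^{-\Delta\hat M/4}=4e^{-\Delta^{2}M/(24\rho^{2})}$, a factor-of-two discrepancy with \eqref{ub40} that looks like a slip in the paper. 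Your rebalanced thresholds close that gap. One small slip in your write-up: the Chernoff bound for event (a) uses $\E\big[e^{\Delta X_1/2}\big]=\phi_{\rho+\Delta}(\Delta/2)\phi_{\rho}(-\Delta/2)=1-\Delta^{2}/[4(\rho+\Delta/2)^{2}]$ (i.e.\ the factor from Lemma~\ref{lem:rw}), not $\E\big[e^{-\Delta X_1/2}\big]$, and it is not literally \eqref{mgf} (which has the opposite sign convention in the two factors); the subsequent arithmetic is unaffected.
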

\begin{proof}
	For $M\in \N$, denote 
	\begin{align}\label{Mt}
		\hat{M}=\Delta\rho^{-2}6^{-1}M,
	\end{align}
	 and let 
	\begin{align}\label{eq12}
		A_k&=\Big\{\cm(X)(kM)\leq -\frac{k\hat{M}}{2},\sup_{kM<s\leq (k+1)M}\cm(X)(s)-\cm(X)(kM)<\frac{k\hat{M}}{4}\Big\}\nonumber\\
			&\stackrel{\eqref{sum}}{=}\Big\{S^{1,kM}(X)\leq -\frac{k\hat{M}}{2},\sup_{kM<i\leq (k+1)M}S^{kM,i}(X)<\frac{k\hat{M}}{4}
		\Big\}\\
		B_k&=\Big\{\cC_{(k+1)M}<\frac{k\hat{M}}{8}\Big\},\nonumber
	\end{align}
	where in the second equality we used the fact that linear interpolation attains its maxima on the interpolated points. It is not hard to see that
	\begin{align*}
		A_k\cap B_k\subseteq \Big\{\sup_{kM<s\leq (k+1)M}F(s)<0\Big\}
	\end{align*}
	which implies that
	\begin{align*}
		\P\Big(\sup_{M<s}F(s)<0\Big)\geq \P\Big(\bigcap_{k\geq 1}A_k\cap B_k\Big).
	\end{align*}
	It follows that 
	\begin{align*}
		\P\Big(\sup_{M<s}F(s)\geq 0\Big)\leq \sum_{k=1}^{\infty }\Big[\P\Big(A_k^c\Big)+\P\Big(B_k^c\Big)\Big]\leq \sum_{k=1}^{\infty}\Big[C^1_k+C^2_k +\P\Big(B_k^c\Big)\Big].
	\end{align*}
	For $\Delta \leq (\sqrt{\frac{3}{2}}-1)2\rho$
	\begin{align*}
		C^1_k&\stackrel{\eqref{eq12}}{=}\P\big(S^{0,kM}> -\frac{k\hat{M}}{2}\big)\stackrel{\ref{lem:rw}}{\leq} e^{-\frac{\Delta^2 kM}{6\rho^2}+\Delta k\hat{M}/4}=e^{-\frac{3\Delta k\hat{M}}{4}}\\
		C^2_k&\stackrel{\eqref{eq12}}{=}\P\Big(\sup_{kM<i\leq (k+1)M}S^{kM,i}(X)>\frac{k\hat{M}}{4}\Big)\stackrel{\ref{lem:rw2}}{\leq} e^{-\frac{\Delta k\hat{M}}{4}},
	\end{align*}
	where in the second equality of the first line we used \eqref{Mt}. It follows that for $\hat{M}>4\Delta^{-1}$
	\begin{align*}
		\P\Big(\sup_{M<s}F(s)\geq 0\Big)\leq \sum_{k=1}^{\infty}2e^{-\frac{\Delta k\hat{M}}{4}} +\P\Big(B_k^c\Big)\leq \sum_{k=1}^{\infty}\P\Big(\cC_{(k+1)M}\geq\frac{k\hat{M}}{8}\Big)+4e^{-\frac{\Delta \hat{M}}{4}}  .
	\end{align*}
	The bound in \eqref{ub40} follows from  substituting $\hat{M}\mapsto M\rho^{-2}\Delta 6^{-1}$ in the last display for $M>24\Delta^{-2}\rho^2$.
\end{proof}
\subsection{Topological results} \label{sub:TR}
Recall the space $\cS$ from Subsection \ref{subsec:sh2}. The following result is well known. 
\begin{lemma}\label{lem:cs}
	The metric space $(d,\cS)$ is complete and separable.
\end{lemma}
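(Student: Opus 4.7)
The plan is to establish the two properties separately, using the fundamental observation that convergence in the metric $d$ is equivalent to uniform convergence on every compact interval $[-n,n]$. Indeed, on the one hand $d(f,g)\le d_n(f,g)+2^{-n}$ by \eqref{maprox}, and on the other hand $d_n(f,g)/(1+d_n(f,g))\le 2^n d(f,g)$. So the metric $d$ metrizes the topology of uniform convergence on compacts.

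For completeness, I would take a Cauchy sequence $\{f_k\}\subset\cS$ and fix $n\in\N$. By the second bound above, $\{f_k\}$ is also Cauchy with respect to the sup-norm $d_n$ on $[-n,n]$. Since $(C([-n,n],\R),d_n)$ is complete (classical), there exists $f^{(n)}\in C([-n,n],\R)$ with $d_n(f_k,f^{(n)})\to 0$. Uniqueness of uniform limits forces the consistency $f^{(n+1)}|_{[-n,n]}=f^{(n)}$, so I can define $f:\R\to\R$ by $f(x)=f^{(n)}(x)$ for any $n\ge |x|$; continuity of $f$ follows from continuity on each $[-n,n]$. To upgrade to convergence in $d$: given $\e>0$, pick $N$ with $2^{-N}<\e/2$, then pick $k$ large enough that $d_N(f_k,f)<\e/2$, and apply \eqref{maprox} to get $d(f_k,f)<\e$.

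For separability, I would exhibit an explicit countable dense set. Let $\mathcal{D}$ be the collection of continuous functions $g:\R\to\R$ that are piecewise linear with finitely many breakpoints at rational abscissae, take rational values at the breakpoints, and are constant (equal to their last rational value) outside a bounded interval with rational endpoints. This set is countable. To show density, fix $f\in\cS$ and $\e>0$. Choose $N\in\N$ with $2^{-N}<\e/2$. On the compact interval $[-N,N]$, the function $f$ is uniformly continuous, so there is a piecewise linear approximation $g_0$ with rational breakpoints and rational values satisfying $\sup_{x\in[-N,N]}|f(x)-g_0(x)|<\e/2$; extending $g_0$ to be constant outside $[-N,N]$ produces $g\in\mathcal{D}$ with $d_N(f,g)<\e/2$, whence $d(f,g)<\e$ by \eqref{maprox}.

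There is no real obstacle here, as this is a standard fact about the Fr\'echet space of continuous functions with the compact-open topology; the only mildly subtle point is the equivalence of the metric $d$ with uniform convergence on compacts, which follows at once from \eqref{maprox} together with the reverse bound indicated above.
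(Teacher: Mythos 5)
Your proof is correct and follows essentially the same route as the paper: reduce to the Banach spaces $C([-n,n],\R)$, use their completeness and separability, and patch the pieces together. You are somewhat more careful than the paper on two points it glosses over — the reverse bound $d_n(f,g)/(1+d_n(f,g))\le 2^n d(f,g)$ needed to show that $d$-Cauchy implies $d_n$-Cauchy, and the fact that a countable dense family in each $C([-n,n])$ must actually be realized by genuine elements of $\cS=C(\R)$ (which you handle by extending constantly) — but the underlying argument is the same.
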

From here until the end of the section, we assume $\cS$ is a complete separable space equipped with a metric $d$. Let us denote by $\cM(\cS)$ the set of probability measures on the Borel sigma algebra generated by the metric on $\cS$. Recall the Prohorov metric $d_{\text{Proh}}:\cM\times \cM\rightarrow \R_+$ given by
\begin{align}\label{pm}
	d_{\text{Proh}}(\mu,\nu)=\inf \big\{ \epsilon > 0 :\mu(A)\leq \nu(A^{\epsilon})+\epsilon \text{ and } \nu(A)\leq \mu(A^{\epsilon})+\epsilon \big\}.
\end{align}
The following holds
\begin{lemma}\label{lem:pm}\cite{Bil68}[Theorem 6.8]
Weak convergence in $\cM(\cS)$ is equivalent to convergence with respect to $d_{\text{Proh}}$, $\cM(\cS)$ is a complete and separable metric space.
\end{lemma}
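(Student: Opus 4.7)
The plan is to follow the classical Prohorov--Varadarajan theory, as the statement essentially packages three standard facts about the space of Borel probability measures on a Polish space. I would prove the three claims (metric equivalence, separability, completeness) as separate steps, relying on the Portmanteau characterization of weak convergence throughout.

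\textbf{Step 1 (metric equivalence).} The goal is to show that $\mu_n \Rightarrow \mu$ (weak convergence) if and only if $d_{\text{Proh}}(\mu_n,\mu)\to 0$. For one direction, assume $d_{\text{Proh}}(\mu_n,\mu)\to 0$. Given a closed set $F\subseteq \cS$ and $\varepsilon>0$, choose $n$ large enough so that $d_{\text{Proh}}(\mu_n,\mu)<\varepsilon$; then $\mu_n(F)\leq \mu(F^\varepsilon)+\varepsilon$. Letting $\varepsilon\downarrow 0$ and using that $\mu(F^\varepsilon)\downarrow \mu(F)$ (since $F$ is closed) gives $\limsup \mu_n(F)\leq \mu(F)$, which is Portmanteau. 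For the converse direction, assume $\mu_n \Rightarrow \mu$ and fix $\varepsilon>0$. Using separability of $\cS$, cover $\cS$ by countably many open balls of radius $\varepsilon/2$ and pick a large finite sub-collection $B_1,\dots,B_K$ whose union $U$ satisfies $\mu(U)>1-\varepsilon/2$. Considering all finite unions of the $B_i$, one obtains a finite algebra of sets whose $\mu$-measures are preserved under approximation by $\mu_n$, and one can show that for $n$ large every Borel set $A$ satisfies $\mu_n(A)\leq \mu(A^\varepsilon)+\varepsilon$ (and symmetrically). This is the standard ``Strassen-style'' argument.

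\textbf{Step 2 (separability).} Let $\{x_k\}_{k\geq 1}\subseteq \cS$ be countable and dense. The set $\cM_0$ of finite convex combinations $\sum_{k=1}^K q_k \delta_{x_k}$ with rational coefficients $q_k\in\mathbb{Q}_+$ summing to $1$ is countable. To show density, given $\mu\in\cM(\cS)$ and $\varepsilon>0$, first use tightness of $\mu$ (automatic on Polish spaces by Ulam's theorem) to find a compact $K_0$ with $\mu(K_0)>1-\varepsilon/2$; partition $K_0$ into finitely many Borel cells of diameter $<\varepsilon$, pick a representative $x_{k_i}$ in each, and approximate the masses by rationals to obtain an element of $\cM_0$ within Prohorov distance $\varepsilon$.

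\textbf{Step 3 (completeness).} This is the main obstacle. Given a Cauchy sequence $\{\mu_n\}$ in $d_{\text{Proh}}$, I would show tightness and then appeal to Prohorov's theorem to extract a weakly convergent subsequence $\mu_{n_k}\Rightarrow \mu$; the Cauchy property together with Step 1 upgrades this to convergence of the whole sequence. The delicate part is tightness: by Ulam's theorem each individual $\mu_n$ is tight, but we need uniform tightness. Fix $\varepsilon>0$ and choose $n_0$ with $d_{\text{Proh}}(\mu_n,\mu_{n_0})<\varepsilon/2^{j+1}$ for all $n\geq n_0$ (for a given layer $j$). Cover $\cS$ by balls of radius $2^{-j}$ and use a finite sub-collection $U_j$ such that $\mu_{n_0}(U_j)>1-\varepsilon/2^{j+1}$; then the Prohorov bound gives $\mu_n(U_j^{\varepsilon/2^{j+1}})>1-\varepsilon/2^{j}$ uniformly in $n\geq 1$ (handling the finitely many initial $n<n_0$ directly). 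Intersecting across $j$ and using completeness of $\cS$ yields a totally bounded, hence (pre)compact, set whose complement has $\mu_n$-mass at most $\varepsilon$ for all $n$, establishing uniform tightness.

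The single nontrivial ingredient throughout is the uniform tightness argument in Step 3; Steps 1 and 2 are essentially bookkeeping with the Portmanteau theorem and rational discretization. Since this is Theorem 6.8 in Billingsley, the cleanest write-up is to state the plan above and cite Billingsley for the detailed execution.
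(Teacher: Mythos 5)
Your sketch is correct and follows the classical Prohorov--Varadarajan argument; the paper provides no proof of its own, simply citing Billingsley, and your outline is precisely the proof found there (Portmanteau plus Strassen discretization for the metrization, rational point-mass combinations for separability, Cauchy $\Rightarrow$ uniform tightness $\Rightarrow$ Prohorov compactness for completeness). So this matches the paper's approach.
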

We recall the space $D(\R,\cS)$ of rcll functions defined on $\R$ and taking values in $\cS$. We follow closely \cite{EKbook}[Chapter 3, Sections 5-9]. We note that in \cite{EKbook} the authors considered the space $D([0,\infty),E)$, where $E$ is a complete separable metric space. However, going over the relevant proofs in \cite{EKbook}[Chapter 3] one concludes that the extension to the real line i.e. $D(\R,\cS)$ is straightforward (See for example \cite{FV09}, where it was done for $D(\R,\R)$). Nevertheless, as we use a slightly different space than the one studied in \cite{EKbook} we define the metric on $D(\R,\cS)$ rather than just refer to \cite{EKbook}.

Let $D(\R,\cS)$ be the space of functions defined on $\R$, that are right continuous with left limits in $\cS$. Let $\Lambda'$ be the set of strictly increasing functions $\lambda$ mapping $\R$ to itself. Let $\Lambda$ be the set of Lipschitz continuous functions $\lambda\in\Lambda'$
\begin{align*}
	\gamma(\lambda)=\sup_{s<t}\abs{\log\frac{\lambda(s)-\lambda(t)}{s-t}}<\infty.
\end{align*}
For $x,y\in D(\R,\cS)$, define
\begin{align}\label{met}
	r(x,y)=\inf_{\lambda\in\Lambda}\big[\gamma(\lambda)\vee \int_0^\infty e^{-u}r(x,y,\lambda,u)\big],
\end{align}
where
\begin{align*}
	r(x,y,\lambda,u)=\sup_{0\leq t}d\big(x(t\wedge u),y(\lambda(t)\wedge u)\big)\vee \sup_{0\geq t}d\big(x(t\vee u),y(\lambda(t)\vee u)\big).
\end{align*}
Since $\cS$ is  complete and separable, the space $(D(\R,\cS),r)$ is also a complete and separable space (see \cite{EKbook}[Theorem 5.6]).
We recall the modulus of continuity $\omega:D(\R,\cS)\times \R_+ \times \R_+\rightarrow \R_+$ associated with the space $D(\R,\cS)$. For $X\in D(\R,\cS)$ define
\begin{align}\label{theta}
	\theta_X[a,b)=\sup_{s,t\in[a,b)}d\big(X(t),X(s)\big),
\end{align}
and
\begin{align}\label{moc}
	\omega(X,t,\delta)=\inf\{\max_{1\leq i \leq n} \theta_X[t_{i-1},t_i)&:\exists n\geq 1,-t=t_0<t_1<...<t_n=t \\
	&\text{such that $t_i-t_{i-1}>\delta$ for all $i\leq n$}\}\nonumber.
\end{align}
The following are a well known tightness criteria
\begin{theorem}\label{thm:tc}\cite{EKbook}[Corollary 7.4]
	Let $\{X^N\}_{N\in\N}$ be a sequence of r.v.s in $D\big(\R,\cS\big)$.  Then $\{X^N\}$ is relatively compact if and only if the following conditions hold:
	\begin{enumerate}
		\item 	\label{mocc} For every $\epsilon>0$ and $t$ rational there exists a compact set $A_{t,\epsilon}$ such that
		\begin{align}\label{fdc2}
		\liminf_{N\rightarrow \infty} \P\Big(X^N(t)\in A_{t,\epsilon}^\epsilon\Big)\geq 1-\epsilon.
		\end{align}		
	\item \label{fdc}For every $\epsilon>0$ and $T>0$
	\begin{align}\label{mocc2}
	\lim_{\delta \rightarrow 0} \limsup_{N\rightarrow \infty} \P\Big(\omega(X^N,T,\delta)>\epsilon\Big)=0.
	\end{align} 
	\end{enumerate} 
\end{theorem}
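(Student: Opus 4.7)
The plan is to prove both directions of this Prohorov-type characterization using the fact that $D(\R,\cS)$ under the metric $r$ from \eqref{met} is a complete separable metric space. Since $\cM(D(\R,\cS))$ is then also Polish (Lemma \ref{lem:pm} applied to this space), by Prohorov's theorem relative compactness of $\{X^N\}$ is equivalent to tightness, i.e.\ the existence, for each $\epsilon>0$, of a compact set $K_\epsilon\subseteq D(\R,\cS)$ with $\P(X^N\in K_\epsilon)\geq 1-\epsilon$ uniformly in $N$. So the result reduces to a characterization of compact subsets of $D(\R,\cS)$, which should look like an Arzel\`a--Ascoli statement: $K$ is relatively compact if and only if (a) the time-slice sets $\{x(t):x\in K\}$ are relatively compact in $\cS$ for each $t$ in a dense subset of $\R$, and (b) the modulus $\omega(x,T,\delta)\to 0$ as $\delta\to 0$, uniformly for $x\in K$, for each $T>0$.

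The \emph{forward} direction is the routine half. Assuming $\{X^N\}$ is relatively compact, I would apply Prohorov to extract tight compact sets $K_\epsilon$ with $\inf_N\P(X^N\in K_\epsilon)\geq 1-\epsilon$. Evaluation at a rational time $t$ is a measurable map $D(\R,\cS)\to\cS$ which is continuous on the $r$-open subset of functions with no jump at $t$; since there are only countably many jump times for each $x\in K_\epsilon$, Portmanteau plus a standard approximation gives \eqref{fdc2} with $A_{t,\epsilon}$ essentially the $\epsilon$-closure of $\{x(t):x\in K_\epsilon\}\cap \cS$. The modulus condition \eqref{mocc2} follows immediately from property (b) of the compact set $K_\epsilon$, since on $K_\epsilon$ the modulus goes to $0$ uniformly.

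The \emph{converse} direction is the main work. Given \eqref{fdc2}--\eqref{mocc2}, I would construct, for any $\epsilon>0$, a candidate compact set as follows. Fix a dense countable set of rationals $\{t_m\}\subseteq\R$; for each $m$ choose $A_{t_m,\epsilon_m}$ from \eqref{fdc2} with $\epsilon_m=\epsilon 2^{-m-1}$; for each $T_j=j\in\N$ and each $k\in\N$ choose, via \eqref{mocc2}, $\delta_{j,k}>0$ so that $\limsup_N\P(\omega(X^N,T_j,\delta_{j,k})>1/k)\leq \epsilon 2^{-j-k-2}$. Define
\begin{align*}
K_\epsilon=\bigl\{x\in D(\R,\cS):\,x(t_m)\in \overline{A_{t_m,\epsilon_m}^{\epsilon_m}}\ \forall m,\ \omega(x,T_j,\delta_{j,k})\leq 1/k\ \forall j,k\bigr\}.
\end{align*}
Verifying $\P(X^N\in K_\epsilon)\geq 1-\epsilon$ for all but finitely many $N$ is a union bound (then enlarge $K_\epsilon$ to handle the finitely many exceptional $N$'s using that each individual distribution is tight on a Polish space). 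The key step is checking that $K_\epsilon$ is actually $r$-compact: given any sequence in $K_\epsilon$, the uniform control of $\omega$ on compact time intervals lets one extract, via a diagonal argument over rationals $t_m$ together with $\cS$-compactness of the closed sets $\overline{A_{t_m,\epsilon_m}^{\epsilon_m}}$, a pointwise limit on the dense set $\{t_m\}$ which then extends to an $r$-limit in $D(\R,\cS)$ by an appropriate time-change argument built into the definition of $r$.

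The main obstacle I expect is this last compactness verification, which hinges on a Skorohod-style Arzel\`a--Ascoli theorem in $D(\R,\cS)$: the control via $\omega$ must be upgraded to genuine $r$-precompactness, which requires producing the $\lambda\in\Lambda$ that witnesses closeness in the metric \eqref{met}. The standard trick (as in Ethier--Kurtz, Chapter 3) is to partition $[-T,T]$ into subintervals of length bigger than $\delta$ on which the oscillation is at most $\epsilon$, interpolate $\lambda$ linearly between the partition points of two functions, and bound $\gamma(\lambda)$ by the ratio of interval lengths; one has to carry this out on each $[-T,T]$ and glue consistently as $T\to\infty$ using the exponential weight $e^{-u}$ in \eqref{met}, which is what makes the extension from $D([0,\infty),\cS)$ to $D(\R,\cS)$ (treated symmetrically via the $t\wedge u$ and $t\vee u$ terms) genuinely two-sided but not substantively harder.
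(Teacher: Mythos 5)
The paper does not prove this theorem; it is cited verbatim from Ethier and Kurtz (Chapter 3, Corollary 7.4), with the remark in Subsection 4.1 that the extension from $D([0,\infty),E)$ to $D(\R,\cS)$ is routine, so there is no internal proof to compare against. Your proposal reconstructs the standard Ethier--Kurtz strategy (Prohorov reduction to tightness plus a Skorohod-space Arzel\`a--Ascoli theorem), which is the right one in outline.

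There is, however, a genuine gap in the converse direction, at exactly the place you flag as the obstacle but then wave through. Your candidate set $K_\epsilon$ constrains the time-slices by $x(t_m)\in\overline{A_{t_m,\epsilon_m}^{\epsilon_m}}$, and you then invoke ``$\cS$-compactness of the closed sets $\overline{A_{t_m,\epsilon_m}^{\epsilon_m}}$'' in the diagonal argument. But $\cS=C(\R)$ with uniform-on-compacts convergence is not locally compact, so the closure of an $\epsilon$-enlargement of a compact set is in general \emph{not} compact; as defined, $K_\epsilon$ has non-compact slices and the Arzel\`a--Ascoli criterion does not apply to it. Ethier--Kurtz avoid this by first proving the version (their Theorem 7.2) in which condition (a) places $X^N(t)$ in an honest compact $\Gamma_t$ with high probability, and only then deriving the enlarged statement (Corollary 7.4) by using the modulus condition to absorb the $\eta$-slack into a new, genuinely compact set (small oscillation near $t$ lets one trade the $\eta$-enlargement for a $\delta$-shift in time and hence for a larger but still compact slice set). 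You would need to either reduce to the un-enlarged formulation or explicitly carry out that absorption argument before asserting $K_\epsilon$ is $r$-compact. As a secondary point, your forward-direction detour through continuity of the evaluation map and Portmanteau is both unnecessary and slightly misplaced (Portmanteau speaks to weak convergence, whereas the goal is a uniform tightness bound): once Prohorov yields the compact $K_\epsilon\subset D(\R,\cS)$, the deterministic Arzel\`a--Ascoli characterization already guarantees that $\{x(t):x\in K_\epsilon\}$ is contained in a compact subset of $\cS$, and one simply takes $A_{t,\epsilon}$ to be that set.
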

\begin{lemma}\label{lem:cl}
	Suppose $\{X^N\}_{N\in\N}$ is a sequence of r.v.s  in $D(\R,\cS)$. Let $t\in \R$. Assume that for every $0<\epsilon<1$ there exists $0<\delta<1$ and $N_1(\epsilon,\delta,t),C(\epsilon)>0$ such that  for $N>N_1$
	\begin{align}\label{ub68}
		\P\Big(\sup_{u,v\in(t-\delta,t+\delta]}d(X^N_u,X^N_v)>\epsilon\Big)<C\delta.
	\end{align}
	Suppose further that
	\begin{align}\label{conv}
		X^N\Rightarrow X,
	\end{align}
	for some $X\in D(\R,\cS)$.Then $X$ is stochastically continuous at $t$, i.e.
	\begin{align*}
	\P(\Delta X_t)=0.
	\end{align*}  
\end{lemma}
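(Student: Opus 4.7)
The plan is to transfer the local oscillation estimate on the prelimits $X^N$ near $t$ into a bound on the jump $d(X(t-),X(t))$ of the limit process, by reading off both processes at carefully chosen \emph{deterministic} times on either side of $t$ and then invoking the Portmanteau theorem. The key enabling fact is standard for rcll processes (see, e.g., \cite[Thm.~3.7.8]{EKbook}): the exceptional set
\[
T^{\text{disc}}:=\{s\in\R:\P(X\text{ is discontinuous at }s)>0\}
\]
is at most countable, and for every $s\notin T^{\text{disc}}$ the evaluation map $\pi_s:D(\R,\cS)\to\cS$ is continuous at $X$ $\P$-almost surely. By the continuous mapping theorem applied to \eqref{conv}, for any $s_1,\ldots,s_k\notin T^{\text{disc}}$ we therefore have the joint weak convergence $(X^N(s_1),\ldots,X^N(s_k))\Rightarrow (X(s_1),\ldots,X(s_k))$ in $\cS^k$.

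Fix $\epsilon\in(0,1)$ and let $\delta\in(0,1)$, $N_1$ and $C=C(\epsilon)$ be as in the hypothesis \eqref{ub68}; the point to note is that the constant $C$ depends only on $\epsilon$, so nothing prevents us from choosing $\delta$ as small as we like (this is indeed what the motivating Lemma~\ref{lem:cond1} delivers). Since $T^{\text{disc}}$ is countable, choose $t_1,t_2\in(t-\delta,t+\delta)\setminus T^{\text{disc}}$ with $t_1<t<t_2$. On the event whose probability is controlled in \eqref{ub68} we have $d(X^N_{t_1},X^N_{t_2})\le\epsilon$, so
\[
\P\bigl(d(X^N(t_1),X^N(t_2))>\epsilon\bigr)\le C\,\delta\qquad\text{for all }N>N_1.
\]
Applying the Portmanteau theorem to the open set $(\epsilon,\infty)\subset\R$, and using that $d(X^N(t_1),X^N(t_2))\Rightarrow d(X(t_1),X(t_2))$ (by composing the joint weak limit of the previous paragraph with the continuous function $d$), transfers this bound to the limit:
\[
\P\bigl(d(X(t_1),X(t_2))>\epsilon\bigr)\le C\,\delta.
\]

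To conclude, choose sequences $t_1^{(n)}\uparrow t$ and $t_2^{(n)}\downarrow t$ inside $(t-\delta,t+\delta)\setminus T^{\text{disc}}$, again possible since $T^{\text{disc}}$ is countable. Because $X$ is rcll, $X(t_1^{(n)})\to X(t-)$ and $X(t_2^{(n)})\to X(t)$ almost surely, so by continuity of $d$,
$d(X(t_1^{(n)}),X(t_2^{(n)}))\to d(X(t-),X(t))$ a.s. The indicator $\mathbf{1}_{(\epsilon,\infty)}$ is lower semicontinuous, so Fatou's lemma gives
\[
\P\bigl(d(X(t-),X(t))>\epsilon\bigr)\le\liminf_{n\to\infty}\P\bigl(d(X(t_1^{(n)}),X(t_2^{(n)}))>\epsilon\bigr)\le C\,\delta.
\]
Since $\delta$ was arbitrary the left-hand side is $0$, and letting $\epsilon\downarrow 0$ through a countable sequence yields $\P(d(X(t-),X(t))>0)=0$, i.e.\ stochastic continuity of $X$ at $t$.

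The only delicate point in the plan is the double limit, first taking $N\to\infty$ at fixed $t_1,t_2$ via Portmanteau, then letting $t_1\uparrow t$ and $t_2\downarrow t$ along points outside $T^{\text{disc}}$; both passages are clean once one exploits the countability of $T^{\text{disc}}$ and the lower semicontinuity of $\mathbf{1}_{(\epsilon,\infty)}$, so the remainder of the argument reduces to careful bookkeeping.
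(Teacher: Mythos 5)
Your proof is correct and follows a genuinely different route from the paper's. The paper passes to almost sure convergence via Skorohod representation, then uses the explicit characterization of Skorohod convergence by time changes $\lambda_N$ together with a triangle inequality to transfer the oscillation bound from $X^N$ on $(t-\delta,t+\delta]$ to $X$ on the same interval, and finally chooses $\delta=\epsilon/(4C)$. You instead lean on the standard fact that the set $T^{\mathrm{disc}}$ of times where the rcll limit $X$ has a jump with positive probability is at most countable, so you can sample at deterministic times $t_1<t<t_2$ outside this set, use the continuous mapping theorem to get joint weak convergence of $(X^N(t_1),X^N(t_2))$, push the bound \eqref{ub68} through the Portmanteau theorem, and then send $t_1\uparrow t$, $t_2\downarrow t$ along $T^{\mathrm{disc}}$-avoiding sequences, using the rcll structure and Fatou. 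Your approach is more modular and avoids the somewhat fiddly bookkeeping with time changes; the paper's is more hands-on and stays closer to the Skorohod metric. One remark that applies equally to both arguments (and which you correctly flag): the hypothesis as stated reads ``there exists $\delta$,'' but both proofs actually need to send $\delta\to 0$ with $C(\epsilon)$ held fixed; this is what Lemma~\ref{lem:cond1} in fact delivers, and the lemma statement should really be read with a universal quantifier on small $\delta$, as you note.
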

\begin{proof}
	From \eqref{conv}, w.l.o.g., we may assume that the convergence in \eqref{conv} is an a.s. one. It then follows that for every $\delta>0$ there exists a sequence $\{\lambda_N\}_{N\geq 1}\subseteq \Lambda$ and $N_0(\delta,\epsilon)>0$ such that for $N>N_0$ (see \cite{EKbook}[p.117])
	\begin{align}\label{ub67}
		\P\Big(\lambda_N(t-\delta,t+\delta]\subseteq [t-2\delta,t+2\delta],\sup_{u\in(t-\delta,t+\delta]}d(X_u,X^N_{\lambda_N(u)})<\frac{\epsilon}{4}\Big)>1-\epsilon/2.
	\end{align}
	By the triangle inequality
	\begin{align}\label{inc2}
		&\Big\{\sup_{u,v\in(t-\delta,t+\delta]}d(X_u,X_v)>\epsilon\Big \}\\
		&\subseteq \Big\{2\sup_{u\in(t-\delta,t+\delta]}d(X_u,X^N_{\lambda_N(u)})>\epsilon/2\Big \}\cap \Big\{\sup_{u,v\in(t-\delta,t+\delta]}d(X^N_{\lambda_N(u)},X^N_{\lambda_N(v)})>\epsilon/2\Big \}\nonumber.
	\end{align}
	 It follows from \eqref{ub67} and \eqref{inc2} that for $N$ large enough
	\begin{align*}
		\P\Big(\sup_{u,v\in(t-\delta,t+\delta]}d(X_u,X_v)>\epsilon\Big)&\leq \frac{\epsilon}{2}+\P\Big(\sup_{u,v\in(t-2\delta,t+2\delta]}d(X^N_{u},X^N_{v})>\epsilon/2\Big)\\
		&\stackrel{\eqref{ub68}}{<}\frac\epsilon 2+2C\delta.
	\end{align*}
	Setting $\delta=\frac{\epsilon}{4C}$
	\begin{align*}
			\P\Big(\sup_{u,v\in(t-\delta,t+\delta]}d(X_u,X_v)>\epsilon\Big)< \epsilon,
	\end{align*}
	which implies the result as $\epsilon$ is arbitrary small.
\end{proof}
\begin{lemma}\label{lem:conv}
	Let $\{X^N\}_{N\in\N}$ be a sequence of r.v.s in $D\big(\R,\cS\big)$.  Let $T\subseteq \R$ be dense. Suppose that 
	\begin{enumerate}
		\item \label{ss} For every $t\in T$ and $0<\epsilon<1$, there exists $0<\delta<1$ and $N_1(\epsilon,\delta),C(\epsilon)>0$ such that  for $N>N_1$
		\begin{align*}
			\P\Big(\sup_{u,v\in(t-\delta,t+\delta]}d(X^N_u,X^N_v)>\epsilon\Big)<C\delta.
		\end{align*}
		\item \label{cfd}For $k\in\N$ and any $t_1,...,t_k\in T$
		\begin{align*}
			(X^N_{t_1},...,X^N_{t_k})\Rightarrow p_{t_1,...,t_k},
		\end{align*}
		for some $p_{t_1,...,t_k}\in \cM(\cS^k)$.
		\item \label{sc}$\{X^N\}_{N \in \N}$ satisfy \eqref{mocc2}.
	\end{enumerate}
	Then there exists a unique process $X\in D(\R,\cS)$ with finite dimensional distributions $p_{\cdot}$, such that 
	 \begin{align}\label{limit}
	 	X^N\Rightarrow X.
	 \end{align}
\end{lemma}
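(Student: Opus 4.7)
The plan is to combine a tightness argument with identification of the finite-dimensional distributions on the dense set $T$, and then conclude weak convergence in $D(\R,\cS)$ by using stochastic continuity to extend from $T$ to all of $\R$. I would proceed in three steps.

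First, I would verify relative compactness of $\{X^N\}$ in $D(\R,\cS)$ using Theorem \ref{thm:tc}. The modulus-of-continuity condition \eqref{mocc2} is exactly hypothesis \ref{sc}, so only the compact-containment condition \eqref{fdc2} at rational $t$ requires work. For this I would pick $t'\in T$ close to the rational $t$, use hypothesis \ref{cfd} (via the tightness of $\{X^N_{t'}\}$ that follows from its weak convergence to $p_{t'}$) to furnish a compact $A_{t,\epsilon}\subset \cS$, and then use hypothesis \ref{ss} at $t'$ to bound the oscillation $d(X^N_t,X^N_{t'})$; together these force $X^N_t$ into an $\epsilon$-thickening of $A_{t,\epsilon}$ with probability at least $1-2\epsilon$ eventually.

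Second, for any subsequential limit $X^{N_k}\Rightarrow X$ in $D(\R,\cS)$, I would identify the finite-dimensional distributions of $X$ on $T$ with $p$. Applying Lemma \ref{lem:cl} (whose hypothesis is precisely \ref{ss}) to the converging subsequence shows that $X$ is stochastically continuous at every $t\in T$. Since the evaluation map $\pi_{t_1,\dots,t_m}:D(\R,\cS)\to\cS^m$ is continuous at any path that is continuous at each $t_i$, and the set of such paths has full measure under the law of $X$, the continuous mapping theorem combined with hypothesis \ref{cfd} yields $(X_{t_1},\dots,X_{t_m})\sim p_{t_1,\dots,t_m}$ for every finite tuple in $T$.

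Finally, I would argue that the finite-dimensional distributions on $T$ pin down the law of $X$ in $D(\R,\cS)$. If $X$ and $Y$ are two subsequential limits, then by the previous step they agree on $T$-tuples and are stochastically continuous at each point of $T$; for arbitrary $s_1<\dots<s_m\in\R$, taking $T\ni t^n_i\downarrow s_i$ and using right-continuity of the paths in $D(\R,\cS)$ transports the equality of laws from $T$-tuples to all tuples in $\R^m$. Relative compactness together with uniqueness of the subsequential-limit distribution then gives the full weak convergence $X^N\Rightarrow X$, establishing \eqref{limit}. The main obstacle is step two: converting weak convergence in the Skorohod topology into convergence of finite-dimensional marginals at points of $T$. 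Without stochastic continuity of the limit at $t\in T$ (for which hypothesis \ref{ss} and Lemma \ref{lem:cl} are essential), $t$ could coincide with a jump of $X$ and the evaluation map would fail to be continuous there, breaking the identification step.
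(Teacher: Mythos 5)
Your proposal is correct and follows the same overall strategy as the paper: verify relative compactness of $\{X^N\}$ via Theorem \ref{thm:tc}, identify the finite-dimensional marginals of any subsequential limit on $T$ using Lemma \ref{lem:cl} (for stochastic continuity of the limit) together with the continuous mapping theorem, and then conclude that the subsequential limit is unique. There are two small deviations from the paper's treatment that are worth noting. (i) Your compact-containment argument is more explicit: you transfer tightness from a nearby $t'\in T$ to a rational $t$ by bounding $d(X^N_t,X^N_{t'})$ through hypothesis \ref{ss}, whereas the paper simply asserts that tightness of $\{X^N_t\}$ for $t\in T$ yields \eqref{fdc2} and defers to Ethier--Kurtz; your version fills in the gap between the dense set $T$ and the rationals. (ii) For uniqueness you extend the agreement of finite-dimensional distributions from $T$-tuples to all real tuples by a path-wise argument using right-continuity of elements of $D(\R,\cS)$ and $T$ dense, whereas the paper invokes the measure-theoretic fact that the Borel $\sigma$-algebra on $D(\R,\cS)$ is already generated by the coordinate projections at a countable dense subset of $T$ (citing \cite{EKbook}, Proposition 7.1 of Chapter 3). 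Both variants are standard and equivalent in effect; the paper's route is shorter, while yours is slightly more self-contained and elementary.
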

\begin{proof}
	From Condition \ref{cfd}, for every $t\in T$ the sequence of distributions $\{X^N_t\}$ is convergent. As $\cS$ is complete and separable, this sequence is also tight, which implies that  \eqref{fdc2} in Theorem \ref{thm:tc} holds for $\{X^N_t\}$ which implies that Condition \ref{mocc} holds for $\{X^N\}$ (see \cite{EKbook}[Theorem 2.2, Chapter 3]). Now suppose Condition \ref{sc} holds. As $\{X^N\}$ satisfies both Conditions \ref{mocc} and \ref{fdc} of Theorem \ref{thm:tc}, the sequence $\{X^N\}_{N\in\N}$ is sequentially compact. We can therefore find a subsequence $\{N_m\}_{m\in\N}$ such that $X^{N_m}\Rightarrow X$ as $m$ goes to infinity for some $X\in D(\R,\cS)$. Take $T'\subseteq T$ countable. Next we claim that for $t_1,...,t_k\in T'$
	\begin{align}\label{lim5}
		(X^{N_m}_{t_1},...,X^{N_m}_{t_k})\Rightarrow (X_{t_1},...,X_{t_k}) \quad \text{as $m\rightarrow \infty$}.
	\end{align} 
	In order to show \eqref{lim5} we will invoke the Continuous Mapping Theorem, for which application we will need to show that $X$ is stochastically continuous. But this follows from Condition \ref{ss} and Lemma \ref{lem:cl}. We continue showing\eqref{lim5} by considering the family of projections $\pi_{t_1,...,t_k}:D(\R,\cS)\rightarrow \cS^k$ for any $k\in\N$ and $(t_1,...,t_k)\in\R^k$, given by $\pi_{t_1,...,t_k}Y=(Y_{t_1},...,Y_{t_k})$ for any $Y\in D(\R,\cS)$. The mapping $\pi_{t_1,...,t_k}$ is continuous at functions in $D(\R,\cS)$ that are continuous at the points $t_1,...,t_k$.   From the stochastic continuity of $X$ we conclude that with probability one, $\pi_{t_1,...,t_k}$ is continuous at $X$ for every  $k\in\N$ and $t_1,...,t_k\in T'$. Fix $k\in\N$ and $t_1,...,t_k\in T'$. From the continuous mapping theorem we conclude that the following distributional limit holds
	\begin{align*}
		\lim_{m\rightarrow \infty}\pi_{t_1,...,t_k} X^{N_m} =\pi_{t_1,...,t_k} X.
	\end{align*}
	We have thus proven \ref{lim5}. From Condition \ref{cfd} we conclude that if $X^1$ and $X^2$ are two sub-sequential limits of $X^N$ then 
	\begin{align}\label{ed}
		\pi_{t_1,...,t_k} X^1 \sim \pi_{t_1,...,t_k} X^2 \quad \forall t_1,...,t_k\in T'.
	\end{align} 
	As the set $\cS$ is separable, by \cite{EKbook}[Proposition 7.1] the sigma algebra $\mathfrak{S}$ generated by $D(\R,\cS)$ equals the sigma algebra  $\sigma(\pi_t:t\in T')$ i.e.
	\begin{align}\label{eq16}
		\mathfrak{S}=\sigma(\pi_t:t\in T').
	\end{align}
	From \eqref{eq16} and \eqref{ed} it follows that 
	\begin{align*}
		X^1\sim X^2,
	\end{align*}
	which implies \eqref{limit}.
\end{proof}

\subsection{Proofs of some computations appearing in the text}

\begin{proof}\label{poub10}[Proof of \eqref{ub10}]
	\begin{align*}
		&=2\int_0^\infty 2^{-M}e^{-2^{-M}t}(1-e^{-2^{-M}t})\Big(4e^{-\frac{1}{32}(2x_0+1)^{-1/2}{N^{-1/3}N^{1/3}t}+1}+4(2x_0^N+1)e^{-N^{1/3}t/32}\Big)dt\\
		&=8e\frac{2^{-M}}{2^{-M}+\tfrac{1}{32}(2x_0+1)^{-1/2}}+8\cdot2^{-M}(2x_0^N+1)\frac{1}{2^{-M}+\tfrac{1}{32}N^{1/3}}\\
		&-\Big[8e\frac{2^{-M}}{2^{-M+1}+\tfrac{1}{32}(2x_0+1)^{-1/2}}+8\cdot2^{-M}(2x_0^N+1)\frac{1}{2^{-M+1}+\tfrac{1}{32}N^{1/3}}\Big]\\
		&=8e\frac{2^{-2M}}{(2^{-M}+\tfrac{1}{32}(2x_0+1)^{-1/2})(2^{-M+1}+\tfrac{1}{32}(2x_0+1)^{-1/2})}\\
		&+8(2x_0^N+1)\frac{2^{-2M}}{(2^{-M}+\tfrac{1}{32}N^{1/3})(2^{-M+1}+\tfrac{1}{32}N^{1/3})}\\
		&\leq 2^5(32)^2(2x_0+1)2^{-2M} +2^3(32)^2(2x_0+1)2^{-2M} \leq 2^{17}x_02^{-2M} +2^{15}x_02^{-2M} \leq  2^{18}x_02^{-2M}
	\end{align*}
\end{proof}
\begin{proof}[Proof of \eqref{ub65}]\label{pr}
	From \cite[Lemma 5.8]{BBS20}
	\begin{align}\label{ub74}
		\P(\text{the event in \eqref{eq15}})\leq 1-\frac{\beta}{\alpha}+\int\Big[\frac{\alpha}{\alpha+\theta}\frac{\beta}{\beta-\theta}\Big]^{2nN^{2/3}}e^{-\theta w}\frac{(\alpha-\beta)\beta}{\alpha}e^{-(\alpha-\beta)w}dw.
	\end{align}
	For $N>|\mu-\delta|^3$
	\begin{align}\label{ub75}
		1-\frac{\beta}{\alpha}\leq \frac{\frac{\delta}{4}N^{-1/3}}{1/2-\frac{\mu-\delta}{4}N^{-1/3}}\leq \delta N^{-1/3}.
	\end{align}
	Setting $\alpha=1/2-\frac{\mu-\delta}{4}N^{-1/3},\beta=1/2-\frac{\mu}{4}N^{-1/3}$ and $\theta=N^{-1/3}$
	\begin{align}\label{ub78}
		\Big[\frac{\alpha}{\alpha+\theta}\frac{\beta}{\beta-\theta}\Big]^{2nN^{2/3}}=\Big[1+\frac{\theta^2+\theta(\alpha-\beta)}{\alpha\beta+\theta(\beta-\alpha)-\theta^2}\Big]^{2nN^{2/3}}
	\end{align}
	For $N\geq 2(4+\delta+|\mu|)^3\geq 2(5+|\mu|)^3$
	\begin{align}\label{ub79}
		(\alpha\beta)+[\theta(\beta-\alpha)-\theta^2]\geq (1/4-\mu/4N^{-1/3})-[(1+\delta/4)N^{-1/3}]>1/8
	\end{align}
	Using \eqref{ub79} in \eqref{ub78} to obtain
	\begin{align}\label{ub76}
		\Big[\frac{\alpha}{\alpha+\theta}\frac{\beta}{\beta-\theta}\Big]^{2nN^{2/3}}\leq \big[1+8(1+\delta/4)N^{-2/3}\big]^{2nN^{2/3}}\leq e^{2n(8+2\delta)} \leq e^{10n}.
	\end{align}
	Finally 
	\begin{align}\label{ub77}
		\int e^{-\theta w}\frac{(\alpha-\beta)\beta}{\alpha}e^{-(\alpha-\beta)w}dw\leq \frac{\delta}{4}N^{-1/3}\int e^{-(\alpha-\beta+\theta)w}dw\leq \frac{\delta}{4(1+\delta/4)}.
	\end{align}
	Plugging \eqref{ub75}, \eqref{ub76} and \eqref{ub77} in \eqref{ub74} we see that for $N\geq 2(5+|\mu|)^3$
	\begin{align*}
		\eqref{ub74}\leq \delta N^{-1/3}+e^{2n(8+2\delta)}\frac{\delta}{4(1+\delta/4)}\leq \delta(1+e^{10n}).
	\end{align*}
\end{proof}
For a sequence $\{x_i\}_{i\in \Z}$ and integers $k\leq l$, we denote by $x^{k,l}=\sum_{i=k}^lx_i$. We use the convention $x^{k,l}=0$ whenever $l<k$.
\begin{lemma}\label{lem:depin}
	Let $\arrv$ and $\servv$ be such that $\depav=D(\arrv,\servv)$ is well defined. Let $x_i=s_{i-1}-a_i$. Then
	\begin{equation}
		d^{k,l}=\sup_{-\infty< i\leq l} x^{i,l}-w_{k-1}+a^{k,l}+s_l-s_{k-1}
	\end{equation} 
\end{lemma}
\begin{proof}
	Denote $x_i=s_{i-1}-a_i$. From equations (A.8) and (A.11) in \cite{bala-busa-sepp-20}
	\begin{equation}\label{eq19}
		\begin{aligned}
		d^{k,l}=&\Big(\inf_{k\leq i\leq l}w_{k-1}+x^{k,i}\Big)^-+s^{k,l}\\
		=&\Big(\sup_{k\leq i\leq l}-w_{k-1}-x^{k,i}\Big)^++s^{k,l}\\
		=&\sup_{k-1\leq i\leq l}y^{k-1,i}+s^{k,l}=\sup_{k-1\leq i\leq l}(y^{k-1,l}-y^{i+1,l}+s^{k,l})
		\end{aligned}
	\end{equation}
where 
\begin{equation}
	y_i=
	\begin{cases}
		0& i=k-1\\
		-w_{k-1}-x_i& i=k\\
		-x_i&i>k.
	\end{cases}
\end{equation}
We have
\begin{equation}
	y^{k-1,l}+s^{k,l}=-w_{k-1}+\sum_{i=k}^{l}(a_i-s_{i-1})+\sum_{i=k}^{l}s_i=-w_{k-1}+\sum_{i=k}^{l}a_i+s_l-s_{k-1}
\end{equation}
Plugging the last display into \eqref{eq19}
\begin{equation}\label{eq20}
	\begin{aligned}
		d^{k,l}=&-w_{k-1}+\sum_{i=k}^{l}a_i+s_l-s_{k-1}+\sup_{k-1\leq i\leq l}(-y^{i+1,l})\\
	=&-w_{k-1}+\sum_{i=k}^{l}a_i+s_l-s_{k-1}+\sup_{k-1\leq i\leq l}(-y^{i+1,l})
	\end{aligned}
\end{equation}
Note that 
\begin{equation}
	\sup_{k-1\leq i\leq l}(-y^{i+1,l})=\max\big\{w_{k-1}+x^{k,l},\sup_{k+1\leq i\leq l} x^{i,l}\big\}=\sup_{-\infty< i\leq l} x^{i,l}
\end{equation}
Plugging the last display into \eqref{eq20} we obtain the result.
\end{proof}
In the next result we use the notation $\hat{\servv}:=\{\hat{s}_i\}_{i\in \Z}$ where $\hat{s}_i=s_{i-1}$.
\begin{corollary}\label{cor:psi}
	Let $\servv,\arrv$ and $\depav$ be as in Lemma \ref{lem:depin}. Then for every $t\in\R$
	\begin{equation}
		\cm[\depav](t)=\sup_{-\infty<s\leq t}\Big[\cm[\hat{\servv}](s)-\cm[\arrv](s)\Big]-\sup_{-\infty<s\leq 0}\Big[\cm[\hat{\servv}](s)-\cm[\arrv](s)\Big]+\cm[\bm{a}](t)+\cE(t),
	\end{equation}
where for every $M\in\Z_+$
\begin{equation}\label{eq21}
	\sup_{|s|\leq M}\big|\cE(s)\big|\leq 4\sup_{i\in [-M,M]} |s_i|+|a_i|.
\end{equation}
\end{corollary}
\begin{proof}
	From Lemma \ref{lem:depin}
	\begin{equation}
		\begin{aligned}
		d^{1,l}=&	\sup_{-\infty< i\leq l} x^{i,l}-w_{0}+a^{1,l}+s_{l-1}-s_0 \qquad l\geq 1\\
		d^{l,0}=&w_{l-1}-\sup_{-\infty< i\leq 0} x^{i,0}-a^{l,0}+s_{l-1}-s_{0} \qquad l\leq -1.
		\end{aligned}
	\end{equation}
from which it follows that 
	\begin{equation}
		\cm[\depav](t)=
		\begin{cases}
		\sup_{-\infty< i\leq l} x^{i,l}-\sup_{-\infty< s\leq -1} \cm[\bm{x}](s)+\cm[\arrv](t)+s_0-s_{t-1}& t\in\Z_+\\
		0 & t=0\\
		\sup_{-\infty< s\leq t} \cm[\bm{x}](s)-\sup_{-\infty< i\leq 0} x^{i,0}+\cm[\arrv](t)+s_{t}-s_{0}& t\in\Z_-
		\end{cases}
	\end{equation}
The last display translates to 
\begin{equation}
	\cm[\depav](t)=\sup_{-\infty<s\leq t}\cm[\bm{x}](s)-\sup_{-\infty<s\leq 0}\cm[\bm{x}](s)+\cm[\bm{a}](t)+\cE(t) \qquad t\in \R,
\end{equation}
where $\cE$ satisfies \eqref{eq21}. The result now follows from the linearity of $\cm$ and the definition of $\bm{x}$. 
\end{proof}

\bibliographystyle{plain}
\bibliography{Biblio}

\begin{thebibliography}{10}

\bibitem{BDJ99}
J.~Baik, P.A. Deift, and K.~Johansson.
\newblock On the distribution of the length of the longest increasing
  subsequence of random permutations.
\newblock {\em J. Amer. Math. Soc.}, 12:1119--1178, 1999.

\bibitem{BBS20}
M.~Bal\'azs, O.~Busani, and T.~Sepp{\"a}l{\"a}inen.
\newblock Local stationarity of exponential last passage percolation.
\newblock {\em Probab. Theory Relat. Fields}, 180:113–162, 2021.

\bibitem{BBS20a}
M\'{a}rton Bal\'{a}zs, Ofer Busani, and Timo Sepp\"{a}l\"{a}inen.
\newblock Non-existence of bi-infinite geodesics in the exponential corner
  growth model.
\newblock {\em Forum Math. Sigma}, 8:Paper No. e46, 34, 2020.

\bibitem{bala-busa-sepp-20}
M\'{a}rton Bal\'{a}zs, Ofer Busani, and Timo Sepp\"{a}l\"{a}inen.
\newblock Local stationarity in exponential last-passage percolation.
\newblock {\em Probab. Theory Related Fields}, 180(1-2):113--162, 2021.

\bibitem{Bar01}
Y.~Baryshnikov.
\newblock {GUEs and queues}.
\newblock {\em Probab. Theory Relat. Fields}, 119:256--274, 2001.

\bibitem{BGH19}
Riddhipratim Basu, Shirshendu Ganguly, and Alan Hammond.
\newblock Fractal geometry of {$\rm Airy_2$} processes coupled via the {A}iry
  sheet.
\newblock {\em Ann. Probab.}, 49(1):485--505, 2021.

\bibitem{BHS18}
Riddhipratim Basu, Christopher Hoffman, and Allan Sly.
\newblock Nonexistence of bigeodesics in integrable models of last passage
  percolation.
\newblock 2018.
\newblock {\tt arXiv:1811.04908}.

\bibitem{BSS19}
Riddhipratim Basu, Sourav Sarkar, and Allan Sly.
\newblock Coalescence of geodesics in exactly solvable models of last passage
  percolation.
\newblock {\em J. Math. Phys.}, 60(9):093301, 22, 2019.

\bibitem{BGH21}
Erik Bates, Shirshendu Ganguly, and Alan Hammond.
\newblock Hausdorff dimensions for shared endpoints of disjoint geodesics in
  the directed landscape.
\newblock {\em Electron. J. Probab.}, 27:--, 2022.

\bibitem{BBO05}
Philippe Biane, Philippe Bougerol, and Neil O'Connell.
\newblock Littelmann paths and {B}rownian paths.
\newblock {\em Duke Math. J.}, 130(1):127--167, 2005.

\bibitem{Bil68}
P.~Billingsley.
\newblock {\em {Convergence of Probability Measures}}.
\newblock Wiley ed., New York, 1968.

\bibitem{Bur56}
P.J. Burke.
\newblock The output of a queuing system.
\newblock {\em Operations Res.}, 4:699--704, 1956.

\bibitem{BF20}
Ofer Busani and Patrik Ferrari.
\newblock Universality of the geodesic tree in last passage percolation.
\newblock {\em Ann. Probab.}, to appear - 2020.

\bibitem{busani2022scaling}
Ofer Busani, Timo Sepp{\"a}l{\"a}inen, and Evan Sorensen.
\newblock Scaling limit of the tasep speed process.
\newblock {\em arXiv preprint arXiv:2211.04651}, 2022.

\bibitem{D21}
Duncan Dauvergne.
\newblock Last passage isometries for the directed landscape.
\newblock {\em arXiv preprint arXiv:2106.07566}, 2021.

\bibitem{DOV18}
Duncan Dauvergne, Janosch Ortmann, and B{\'a}lint Vir{\'a}g.
\newblock The directed landscape.
\newblock {\em arXiv preprint arXiv:1812.00309}, 2018.

\bibitem{DV21}
Duncan Dauvergne and B{\'a}lint Vir{\'a}g.
\newblock The scaling limit of the longest increasing subsequence.
\newblock {\em arXiv preprint arXiv:2104.08210}, 2021.

\bibitem{EKbook}
Stewart~N. Ethier and Thomas~G. Kurtz.
\newblock {\em Markov processes}.
\newblock Wiley Series in Probability and Mathematical Statistics: Probability
  and Mathematical Statistics. John Wiley \& Sons, Inc., New York, 1986.
\newblock Characterization and convergence.

\bibitem{FS18}
W.~Fan and T.~Sepp{\"a}l{\"a}inen.
\newblock Joint distribution of {B}usemann functions in the exactly solvable
  corner growth model.
\newblock {\em Prob. Math. Phys.}, 1:55--100, 2018.

\bibitem{FV09}
D.~Ferger and D.~Vogel.
\newblock Weak convergence of the empirical process and the rescaled empirical
  distribution function in the {S}korokhod product space.
\newblock {\em Teor. Veroyatn. Primen.}, 54(4):750--770, 2009.

\bibitem{GH21}
Shirshendu Ganguly and Milind Hegde.
\newblock Local and global comparisons of the airy difference profile to
  brownian local time.
\newblock 2021.

\bibitem{GRAS17}
Nicos Georgiou, Firas Rassoul-Agha, and Timo Sepp\"{a}l\"{a}inen.
\newblock Geodesics and the competition interface for the corner growth model.
\newblock {\em Probab. Theory Related Fields}, 169(1-2):223--255, 2017.

\bibitem{GRAS17b}
Nicos Georgiou, Firas Rassoul-Agha, and Timo Sepp\"{a}l\"{a}inen.
\newblock Stationary cocycles and {B}usemann functions for the corner growth
  model.
\newblock {\em Probab. Theory Related Fields}, 169(1-2):177--222, 2017.

\bibitem{GW91}
Peter~W. Glynn and Ward Whitt.
\newblock Departures from many queues in series.
\newblock {\em Ann. Appl. Probab.}, 1(4):546--572, 1991.

\bibitem{GTW01}
Janko Gravner, Craig~A. Tracy, and Harold Widom.
\newblock Limit theorems for height fluctuations in a class of discrete space
  and time growth models.
\newblock {\em J. Statist. Phys.}, 102(5-6):1085--1132, 2001.

\bibitem{H08}
Christopher Hoffman.
\newblock Geodesics in first passage percolation.
\newblock {\em Ann. Appl. Probab.}, 18(5):1944--1969, 2008.

\bibitem{HN97}
C.~Douglas Howard and Charles~M. Newman.
\newblock Euclidean models of first-passage percolation.
\newblock {\em Probab. Theory Related Fields}, 108(2):153--170, 1997.

\bibitem{HN01}
C.~Douglas Howard and Charles~M. Newman.
\newblock Geodesics and spanning trees for {E}uclidean first-passage
  percolation.
\newblock {\em Ann. Probab.}, 29(2):577--623, 2001.

\bibitem{JRAS19}
Christopher Janjigian, Firas Rassoul-Agha, and Timo Sepp{\"a}l{\"a}inen.
\newblock Geometry of geodesics through busemann measures in directed
  last-passage percolation.
\newblock {\em arXiv preprint arXiv:1908.09040}, 2019.

\bibitem{LN96}
Cristina Licea and Charles~M. Newman.
\newblock Geodesics in two-dimensional first-passage percolation.
\newblock {\em Ann. Probab.}, 24(1):399--410, 1996.

\bibitem{MQR17}
Konstantin Matetski, Jeremy Quastel, and Daniel Remenik.
\newblock The {KPZ} fixed point.
\newblock {\em Acta Math.}, 227(1):115--203, 2021.

\bibitem{N95}
Charles~M. Newman.
\newblock A surface view of first-passage percolation.
\newblock In {\em Proceedings of the {I}nternational {C}ongress of
  {M}athematicians, {V}ol. 1, 2 ({Z}\"{u}rich, 1994)}, pages 1017--1023.
  Birkh\"{a}user, Basel, 1995.

\bibitem{Occ03}
Neil O'Connell.
\newblock A path-transformation for random walks and the {R}obinson-{S}chensted
  correspondence.
\newblock {\em Trans. Amer. Math. Soc.}, 355(9):3669--3697, 2003.

\bibitem{o2001brownian}
Neil O'Connell and Marc Yor.
\newblock Brownian analogues of burke's theorem.
\newblock {\em Stochastic processes and their applications}, 96(2):285--304,
  2001.

\bibitem{OcY02}
Neil O'Connell and Marc Yor.
\newblock A representation for non-colliding random walks.
\newblock {\em Electron. Comm. Probab.}, 7:1--12, 2002.

\bibitem{P16}
Leandro P.~R. Pimentel.
\newblock Duality between coalescence times and exit points in last-passage
  percolation models.
\newblock {\em Ann. Probab.}, 44(5):3187--3206, 2016.

\bibitem{P18}
Leandro P.~R. Pimentel.
\newblock Local behaviour of airy processes.
\newblock {\em J. Stat. Phys.}, 173(6):1614--1638, 2018.

\bibitem{P21}
Leandro P.~R. Pimentel.
\newblock Ergodicity of the {KPZ} fixed point.
\newblock {\em ALEA Lat. Am. J. Probab. Math. Stat.}, 18(1):963--983, 2021.

\bibitem{PS02}
M.~Pr{\"a}hofer and H.~Spohn.
\newblock Scale invariance of the {PNG} droplet and the {A}iry process.
\newblock {\em J. Stat. Phys.}, 108:1071--1106, 2002.

\bibitem{MV21}
Mustazee Rahman and Balint Virag.
\newblock Infinite geodesics, competition interfaces and the second class
  particle in the scaling limit.
\newblock {\em arXiv preprint arXiv:2112.06849}, 2021.

\bibitem{RA18}
Firas Rassoul-Agha.
\newblock Busemann functions, geodesics, and the competition interface for
  directed last-passage percolation.
\newblock In {\em Random growth models}, volume~75 of {\em Proc. Sympos. Appl.
  Math.}, pages 95--132. Amer. Math. Soc., Providence, RI, 2018.

\bibitem{R81}
H.~Rost.
\newblock Non-equilibrium behavior of a many particle system: density profile
  and local equilibrium.
\newblock {\em Z. Wahrsch. Verw. Gebiete}, 58:41--53, 1981.

\bibitem{S18}
Timo Sepp{\"a}l{\"a}inen.
\newblock Variational formulas, busemann functions, and fluctuation exponents
  for the corner growth model with exponential weights.
\newblock {\em arXiv preprint arXiv:1709.05771}, 2017.

\bibitem{SX20}
Timo Sepp\"{a}l\"{a}inen and Xiao Shen.
\newblock Coalescence estimates for the corner growth model with exponential
  weights.
\newblock {\em Electron. J. Probab.}, 25:Paper No. 85, 31, 2020.

\bibitem{SS21a}
Timo Sepp{\"a}l{\"a}inen and Evan Sorensen.
\newblock Busemann process and semi-infinite geodesics in brownian last-passage
  percolation.
\newblock {\em arXiv preprint arXiv:2103.01172}, 2021.

\bibitem{SS21b}
Timo Sepp{\"a}l{\"a}inen and Evan Sorensen.
\newblock Global structure of semi-infinite geodesics and competition
  interfaces in brownian last-passage percolation.
\newblock {\em arXiv preprint arXiv:2112.10729}, 2021.

\bibitem{SK90}
W.~Szczotka and F.~P. Kelly.
\newblock Asymptotic stationarity of queues in series and the heavy traffic
  approximation.
\newblock {\em Ann. Probab.}, 18(3):1232--1248, 1990.

\end{thebibliography}

\end{document}